        \crefname{subsection}{Subsection}{Subsections}
        \crefname{subsection}{Subsection}{Subsections}
        \tikzset{every picture/.style = {line width=1pt}}
        \tikzset{vertex/.style = {shape=circle,draw,inner sep=1pt, outer sep=1pt}}
        \tikzset{edge/.style = {->,> = latex'}}
        \tikzset{curvy/.style={decorate, decoration=snake,segment length=.25cm}}
    \theoremstyle{plain}
        \newtheorem{theorem}{Theorem}[section]
        \newtheorem*{maintheorem}{Main Theorem}
        \newtheorem{corollary}[theorem]{Corollary}
        \newtheorem{lemma}[theorem]{Lemma}
        \newtheorem{proposition}[theorem]{Proposition}
        \newtheorem{claim}[theorem]{Claim}
    \theoremstyle{definition}
        \newtheorem{definition}[theorem]{Definition}
    \theoremstyle{remark}
        \newtheorem{remark}[theorem]{Remark}
        \newtheorem{question}[theorem]{Question}
    \newenvironment{sketch}{%
        \proof}{\endproof}
    \newtheoremstyle{named}{}{}{\itshape}{}{\bfseries}{.}{.5em}{\thmnote{#3's }#1} 
        \theoremstyle{named}
    \title{On Thompson Groups for Wa\.zewski Dendrites}
    \author{Matteo Tarocchi}
    \date{}
    \thanks{
    The author is supported by the French project GoFR (ANR-22-CE40-0004).
    The author is a member of the Gruppo Nazionale per le Strutture Algebriche, Geometriche e le loro Applicazioni (GNSAGA) of the Istituto Nazionale di Alta Matematica (INdAM)
    }
    \address{Dipartimento di Matematica e Applicazioni, Universit\`a degli Studi di Milano-Bicocca, Milano, Italy, EU.
    Current: Université Paris-Saclay, CNRS, Laboratoire de mathématiques d’Orsay, Orsay, France, EU \& Université de Rennes, CNRS, IRMAR, Rennes, France, EU
    }
    \email{\href{mailto:matteo.tarocchi.math@gmail.com}{matteo.tarocchi.math@gmail.com}}
\begin{document}

\begin{abstract}
    We study a family of Thompson-like groups built as rearrangement groups of fractals from \cite{belk2016rearrangement}, each acting on a Wa\.zewski dendrite.
    Each of these is a finitely generated group that is dense in the full group of homeomorphisms of the dendrite (studied in \cite{DM19}) and has infinite-index finitely generated simple commutator subgroup, with a single possible exception.
    More properties are discussed, including finite subgroups, the conjugacy problem, invariable generation and existence of free subgroups.
    We discuss many possible generalizations, among which we find the Airplane rearrangement group $T_A$.
    Despite close connections with Thompson's group $F$, dendrite rearrangement groups seem to share many features with Thompson's group $V$.
\end{abstract}

\keywords{%
    Wa\.zewski dendrites, %
    rearrangement groups, %
    Thompson groups, %
    infinite simple groups, %
    Julia sets%
    }

\subjclass{20F65 (Primary), 20F38, 28A80, 20E32, 20F05, 54H11, 20F10, 20E45 (Secondary)}

\maketitle


\section{Introduction}

The main trio of Thompson groups, commonly denoted by $F$, $T$ and $V$, was introduced in the 60's by Richard Thompson.
The two bigger siblings $T$ and $V$ became the first known examples of infinite finitely presented simple groups, while $F$, despite many decades of attempts, is still shrouded in the mystery that concerns its amenability.
These are countable groups of ``piecewise-rigid'' homeomorphisms of the unit interval, the unit circle and the Cantor space, respectively:
the main idea behind this ``piecewise-rigidity'' is that the space on which they act is equipped with dyadic subdivision rules that provide arbitrarily granular finite partitions, and the elements of a Thompson's group act by homeomorphisms that are also bijections between the pieces of two such partitions.
A common reference for an introduction to these groups is \cite{cfp}.
Thompson groups have prompted many generalizations based on the same idea of permuting the pieces of objects of some sort of a rewriting system, among which we recall, as examples:
the Thompson-Stein groups from \cite{Stein}, where multiple arities of subdivisions are allowed;
the diagram groups studied in \cite{guba1997diagram}, where the subdivision rules are relations of a semigroup presentation;
the higher-dimensional versions of $V$ introduced in \cite{Brin2004HigherDT}, based on subdivisions of Cantor cubes, and in \cite{TwistednV} a version of these last groups that allows twisting the pieces of the partition.

Another notable example is the family of rearrangement groups of fractals recently introduced in \cite{belk2016rearrangement} by J. Belk and B. Forrest.
These are groups of ``piecewise-rigid'' homeomorphisms of certain fractals that are built using subdivision rules based on finite graphs.
The family of rearrangement groups includes the original trio of Thompson groups $F$, $T$ and $V$ along with their natural generalizations $F_{n,k}$, $T_{n,k}$ and $V_{n,k}$ (commonly known as Higman-Thompson groups), and it features new Thompson-like groups such as the Basilica rearrangement group $T_B$, introduced earlier in \cite{Belk_2015} as a prototype of rearrangement groups, and the Airplane rearrangement group $T_A$, introduced in \cite{belk2016rearrangement} as an example and then studied in \cite{Airplane}.
It also includes certain diagram groups (Example 2.14 of \cite{belk2016rearrangement}), along with the Houghton groups $H_n$ from \cite{Houghton1978TheFC} and the Thompson-like groups $QF$, $QT$ and $QV$ introduced in \cite{QV} and also studied in \cite{QV1} (see \cite{RearrConj} for a construction of $QF$, $QT$, $QV$ and the Houghton groups as rearrangement groups).
Rearrangement groups can be embedded in Thompson's group $V$ (see \cref{prop:dendrite:Thompson:comparison} and \cite[Remark 1.8]{RearrConj}), so in particular they are countable groups.

Since this family has been introduced only recently, the list of results that are known about them is still fairly short:
in \cite{TBnotfinpres} it was showed that the Basilica rearrangement group $T_B$ is not finitely presented;
\cite{IG} provides a sufficient condition for non-invariable generation;
in \cite{BasilicaDense} it is proved that $T_B$ is dense in the group of all orientation-preserving homeomorphism of the Basilica fractal;
\cite{RearrConj} studies the conjugacy problem in this family of groups.

Among the many fractals on which a rearrangement group can act, dendrites (connected, locally connected, compact metric spaces without simple closed curves, such as the one in \cref{fig:D3}) have already inspired multiple works from group theorists, arguably because of their tree-like structure (the sense in which they are tree-like can actually be formalized: see Definition 2.12 and Theorem 10.32 of \cite{continua}).
For example, \cite{UniversalDendrites} is about the group of homeomorphisms of universal dendrites (i.e., dendrites that contain a homeomorphic copy of every dendrite), \cite{AmenableDendrites} is about actions of amenable countable groups on dendrites and \cite{DM18} is about the dynamics of group actions on dendrites.
The recent work \cite{DM19}, which is going to be a fundamental reference here, provides a rich study of the group of homeomorphisms of a dendrite.
Moreover, \cite{Kaleid} uses Wa\.zewski dendrites to build simple permutation groups with interesting dynamical and topological properties, and \cite{Duc20} provides a deep study of the group of homeomorphisms of the infinite-order Wa\.zewski dendrite $D_\infty$.

\begin{figure}
\centering
\includegraphics[width=.667\textwidth]{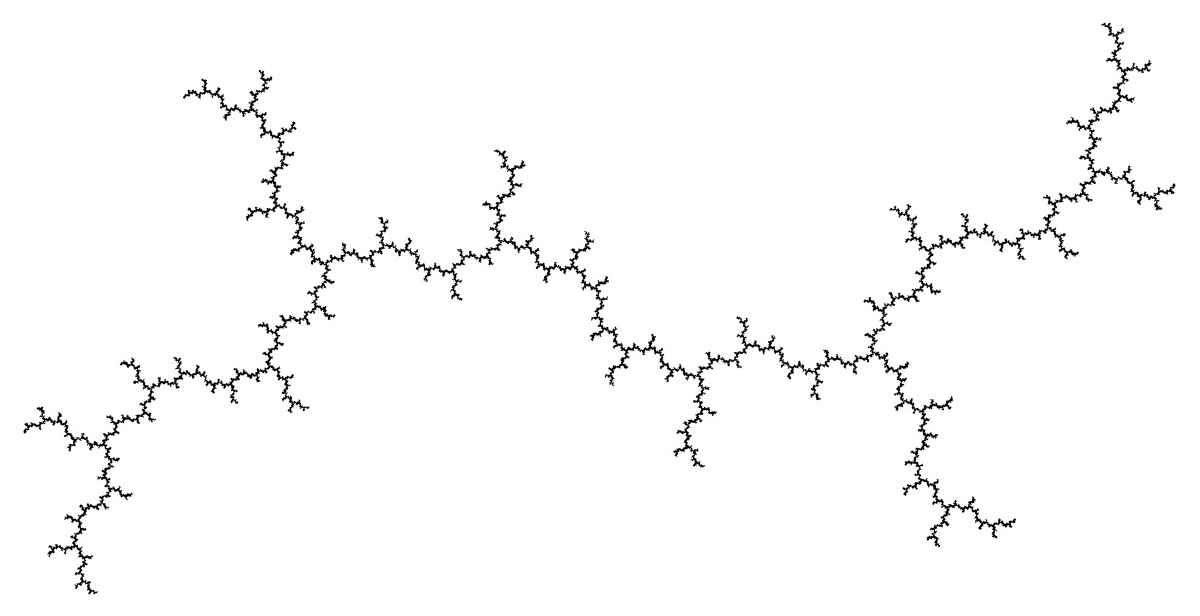}
\caption{The Wa\.zewski dendrite $D_3$ (\href{https://commons.wikimedia.org/wiki/File:Julia_IIM_1.jpg}{image} by Adam Majewski, licensed under \href{https://creativecommons.org/licenses/by/3.0}{CC BY 3.0}).}
\label{fig:D3}
\end{figure}

This work belongs to the intersection of these two fields of study in group theory, that of Thompson-like groups and that of groups acting on dendrites.
We will make use of strategies from both worlds, without assuming that the reader is familiar with any of them.
We will consider Wa\.zewski dendrites $D_n$ ($n \geq 3$, \cref{fig:D3} depicts $D_3$), which can be thought as ``dense $n$-regular trees'', and we will study rearrangement groups of Thompson-like homeomorphisms of them.
After providing formal definitions and the necessary background on Wa\.zewski dendrites and rearrangement groups in \cref{sec:background}, in \cref{sec:dendrite:rearrangement:groups} we will exhibit the replacement systems $\mathcal{D}_n$ (which is the aforementioned subdivision rules) that realize Wa\.zewski dendrites as limit spaces, and their rearrangement groups $G_n$ are the subjects of the subsequent \cref{sec:gen,,sec:dns,,sec:comm,,sec:properties}, where we prove the results collected in the following statement:

\begin{maintheorem}
For every $n\geq4$, the dendrite rearrangement group $G_n$ is finitely generated by a copy of Thompson's group $F$ and a copy of the permutation group $\mathrm{Sym}(n)$ (\cref{sec:gen}) and it is dense in the full group of homeomorphisms of the Wa\.zewski dendrite $D_n$ (\cref{sec:dns}).

The abelianization $G_n / [G_n, G_n]$ of $G_n$ is isomorphic to the infinite group $\mathbb{Z}_2 \oplus \mathbb{Z}$, and the commutator subgroup $[G_n, G_n]$ of $G_n$ is is finitely generated and, if $n \geq 4$, then it is simple (\cref{sec:comm}).

Dendrite rearrangement groups embed into $V$, but they do not embed into $T$ nor does $T$ embed into any of them.
Moreover, they are not invariably generated, their conjugacy problem is solvable, they contain non-abelian free subgroups, they are pairwise non-isomorphic and the cardinality of their finite subgroups have been classified (\cref{sec:properties}).
\end{maintheorem}

It is not yet known whether or not the commutator subgroup of $G_3$ is simple, as will be discussed in \cref{rmk:3:comm:simple}.

Finally, \cref{sec:generalizations} explores the relation between dendrite rearrangement groups and the Airplane rearrangement group $T_A$ studied in \cite{Airplane}, proving that it is dense in the group of all orientation-preserving homeomorphisms of the Wa\.zewski dendrite $D_\infty$, and hints at possible generalizations of these groups, such as the rearrangement groups of generalized Wa\.zewski dendrites $D_S$ for finite subsets $S \subset \mathbb{N}_{\geq3}$.

We believe that results about density of rearrangement groups in their ``ambient'' overgroup (which are not uncommon, as discussed in \cref{sub:dns}) could produce interesting connections with computer science.
These results could provide a tool for ``approximating'' uncountable homeomorphism groups of fractals by Thompson-like groups, which, being countable and showing nice behaviour with regards to decision problems (such as often having solvable conjugacy problem by \cite{RearrConj}), are arguably quite suitable for computer processing.

Overall, dendrite rearrangement groups seem to have multiple features in common with Thompson's groups $F$:
they are generated by a copy of $F$ along with a finite group, hence they inherit many transitivity properties from $F$, and the commutator subgroup is partly characterized by the behaviour at the endpoints.
They also share features with Thompson's group $V$, such as featuring torsion, not being invariably generated and including copies of non-abelian free subgroups.
And, in some way, they resemble neither $F$ nor $V$.


\section{Background on Dendrites and Rearrangement Groups}
\label{sec:background}

This Section provides a quick introduction to the topics of dendrites and rearrangement groups.
In particular, the information needed about Wa\.zewski dendrites will be given in \cref{sub:dendrites}, while \cref{sub:rep:sys,sub:limit:spaces,sub:rearrangements} describe what a \textit{replacement system} is, how it produces a fractal which we will call its \textit{limit space} and how it prompts the definition of a group of piecewise-canonical homeomorphisms of the limit space, called \textit{rearrangements}.

It is understood that these Subsections only provide brief overviews.
Chapter X of \cite{continua} gives further general details on dendrites, whereas \cite{belk2016rearrangement} should be consulted for a fully detailed introduction to replacement systems, limit spaces and rearrangement groups.

\subsection{Wa\.zewski Dendrites}
\label{sub:dendrites}

We begin with the general definition of dendrites, then we introduce a concept of order of a point which will be useful to define what Wa\.zewski dendrites are.

\begin{definition}
\label{def:dendrite}
A \textbf{dendrite} is a non-empty, locally connected, compact, connected metric space that contains no simple closed curves.
A dendrite is \textbf{degenerate} if it is a singleton.
A \textbf{subdendrite} is
a dendrite that is the subspace 
of a dendrite.
\end{definition}

There are many equivalent definitions of dendrites.
For example, we could also define a dendrite to be a non-empty compact connected metric space such that the intersection of any two connected subsets is a connected subset (\cite[Theorem 10.10]{continua}).
See also (1.1) and (1.2) in Section V.1 of \cite{AnalyticTopology} for further characterizations of dendrites among continua and locally connected continua.
Non-degenerate dendrites are also the underlying topological spaces behind the so-called \textit{metric trees} (\cite[Proposition 2.2]{MetricTrees}).
Examples of dendrites are the compactification of any simplicial tree (as noted in \cite[Proposition 12.2]{DM18}) and certain Julia sets (for instance, for the complex map $z \to z^2 + i$).

The \textbf{order} of a point $p$ of a dendrite $X$ is the cardinality of the set of connected components of $X \setminus \{p\}$.
(For dendrites, this is equivalent to the more general notion of Menger–Urysohn order, which is discussed in \cite[7, \S 51, p. 274-307]{Top2}; see also \cite[Definition 9.3]{continua}.)
A distinction of points of a dendrite based on their order will be given later in \cref{sub:points:arcs}.

For the purpose of this work, we are going to consider the most regular non-trivial dendrites, which are called Wa\.zewski dendrites and are defined right below.
We would like to mention that a general dendrite might be so irregular that its homeomorphism group is trivial, so it is natural that the most regular dendrites would produce larger and often more interesting groups of homeomorphisms.

\begin{definition}
\label{def:wazewski:dendrite}
Given a natural number $n \geq 3$, a \textbf{Wa\.zewski dendrite} of order $n$, denoted by $D_n$, is a dendrite each of whose points is either of order $1$, $2$ or $n$ (no intermediate values), and such that every arc of $D_n$ (which is any subspace of $D_n$ that is homeomorphic to the closed interval $[0,1]$) contains a point of order $n$.
\end{definition}

For example, \cref{fig:D3} depicts the Wa\.zewski dendrite of order $3$.

By Theorem 6.2 of \cite{selfhomeomorphisms}, $D_n \simeq D_m$ if and only if $n = m$, so we can actually talk about \textit{the} Wa\.zewski dendrite of order $n$.
In the next Subsections we will build these dendrites as limit spaces of replacement systems.

\subsection{Replacement Systems}
\label{sub:rep:sys}

We will be working with finite graphs, all of which are directed edge-colored graphs where we allow both loops and parallel edges.
Thus, for our purpose a \textbf{graph} will be a sixtuplet $(V, E, C, \mathrm{i}, \mathrm{t}, \mathrm{c})$ where:
\begin{itemize}
    \item $V$, $E$ and $C$ are finite sets, whose elements are called \textbf{vertices}, \textbf{edges} and \textbf{colors}, respectively;
    \item $\mathrm{i}$ and $\mathrm{t}$ are maps $E \to V$, whose images $\mathrm{i}(e)$ and $\mathrm{t}(e)$ are called the \textbf{initial vertex} and the \textbf{terminal vertex} of the edge $e$;
    \item $\mathrm{c}$ is a map $E \to C$, and each image $\mathrm{c}(e)$ is called the \textbf{color} of the edge $e$.
\end{itemize}
When the set of colors is a singleton (which will be the case for dendrite replacement systems until \cref{sub:orientation}), the map $\mathrm{c}$ is trivial, and both the set $C$ and the map $\mathrm{c}$ can be naturally omitted.
In this case, we say that the replacement system is \textbf{monochromatic}.

We say that an edge $e$ is \textbf{incident} on a vertex $v$ when $\iota(e)=v$ or $\tau(e)=v$.
When two edges are incident on a common vertex, we say that they are \textbf{adjacent}.

\begin{definition}
\label{def:rep:sys}
A \textbf{replacement system} consists of the following data:
\begin{itemize}
    \item a finite set $C$ of colors, which will be coloring the edges of each graph mentioned below;
    \item a finite graph $\Gamma$, called the \textbf{base graph};
    \item for each color $c \in C$, a finite graph $R_c$, called the \textbf{$\boldsymbol{c}$ replacement graph} (for example, the \textit{red replacement graph}), equipped with distinct vertices $\iota$ and $\tau$ called the \textbf{initial} and \textbf{terminal vertices} of $R_c$, respectively.
\end{itemize}
\end{definition}

Starting from the base graph $\Gamma$, we can \textbf{expand} it by replacing one of its edges $e$ with the $c(e)$ replacement graph, attaching the initial and terminal vertices of $R_{c(e)}$ in place of the initial and terminal vertices $i(e)$ and $t(e)$ of the edge $e$.
The same procedure can be applied to any expansion of the base graph, which allows to obtain infinitely many further expansions of the base graph.
For example, \cref{fig:rep:sys:A} shows the Airplane replacement system (which realizes the Airplane rearrangement group $T_A$ from \cite[Example 2.13]{belk2016rearrangement}, studied in \cite{Airplane}), and \cref{fig:expansion:A} portrays two subsequent expansions of the base graph of this replacement system:
the first one is a blue expansion (and it is the only possible one, because the base graph consists of a sole edge) and the second one is a red expansion.
The order of expansions does not matter when expanding two edges of the same graph (for example, if we want to expand the edges $ib_1$ and $ib_3$ in one of the two graphs depicted in \cref{fig:expansion:A}, it does not matter which of the two we expand first.

\begin{figure}
\centering
\begin{tikzpicture}
    \node at (0,1.6) {$\Gamma$};
    \node[vertex] (l) at (-.75,0) {};
    \node[vertex] (r) at (.75,0) {};
    \draw[edge,blue] (l) to node[above]{$i$} (r);
    \begin{scope}[xshift=3.5cm]
    \node at (0,1.6) {$R_{\text{\textcolor{blue}{blue}}}$};
    \node[vertex] (l) at (-1.75,0) {}; \draw (-1.75,0) node[above]{$\iota$};
    \node[vertex] (cl) at (-.5,0) {};
    \node[vertex] (cr) at (.5,0) {};
    \node[vertex] (r) at (1.75,0) {}; \draw (1.75,0) node[above]{$\tau$};
    \draw[edge,blue] (cl) to node[above]{$b_1$} (l);
    \draw[edge,blue] (cr) to node[above]{$b_4$} (r);
    \draw[edge,red] (cr) to[out=90,in=90,looseness=1.4] node[above]{$b_2$} (cl);
    \draw[edge,red] (cl) to[out=270,in=270,looseness=1.4] node[above]{$b_3$} (cr);
    \end{scope}
    \begin{scope}[xshift=7.5cm]
    \node at (0,1.6) {$R_{\text{\textcolor{red}{red}}}$};
    \node[vertex] (l) at (-1.25,0) {}; \draw (-1.25,0) node[above]{$\iota$};
    \node[vertex] (r) at (1.25,0) {}; \draw (1.25,0) node[above]{$\tau$};
    \node[vertex] (c) at (0,0) {};
    \node[vertex] (ct) at (0,1) {};
    \draw[edge,red] (l) to node[below]{$r_1$} (c);
    \draw[edge,red] (c) to node[below]{$r_2$} (r);
    \draw[edge,blue] (c) to node[left]{$r_3$} (ct);
    \end{scope}
\end{tikzpicture}
\caption{The Airplane replacement system.}
\label{fig:rep:sys:A}
\end{figure}
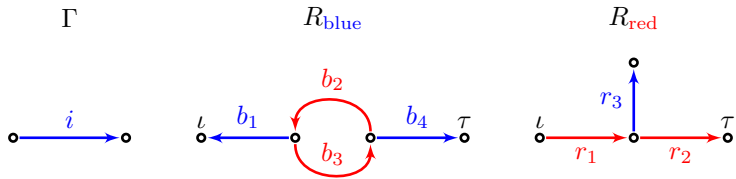

\begin{figure}
\centering
\begin{tikzpicture}
    \node[vertex] (l) at (-2.333,0) {};
    \node[vertex] (cl) at (-.667,0) {};
    \node[vertex] (cr) at (.667,0) {};
    \node[vertex] (r) at (2.333,0) {};
    \draw[edge,blue] (cl) to node[above]{$i b_1$} (l);
    \draw[edge,blue] (cr) to node[above]{$i b_4$} (r);
    \draw[edge,red] (cr) to[out=90,in=90,looseness=1.4] node[above]{$i b_2$} (cl);
    \draw[edge,red] (cl) to[out=270,in=270,looseness=1.4] node[above]{$i b_3$} (cr);
    \begin{scope}[xshift=6cm]
    \node[vertex] (l) at (-2.333,0) {};
    \node[vertex] (cl) at (-.667,0) {};
    \node[vertex] (c) at (0,.667) {};
    \node[vertex] (ct) at (0,1.667) {};
    \node[vertex] (cr) at (.667,0) {};
    \node[vertex] (r) at (2.333,0) {};
    \draw[edge,blue] (cl) to node[above]{$i b_1$} (l);
    \draw[edge,blue] (cr) to node[above]{$i b_4$} (r);
    \draw[edge,red] (cr) to[out=90,in=0] node[above right]{$i b_2 r_1$} (c);
    \draw[edge,blue] (c) to node[above left]{$i b_2 r_3$} (ct);
    \draw[edge,red] (c) to[out=180,in=90] node[above left]{$i b_2 r_2$} (cl);
    \draw[edge,red] (cl) to[out=270,in=270,looseness=1.45] node[above]{$i b_3$} (cr);
    \end{scope}
\end{tikzpicture}
\caption{Two subsequent expansions of the base graph of the Airplane replacement system.}
\label{fig:expansion:A}
\end{figure}
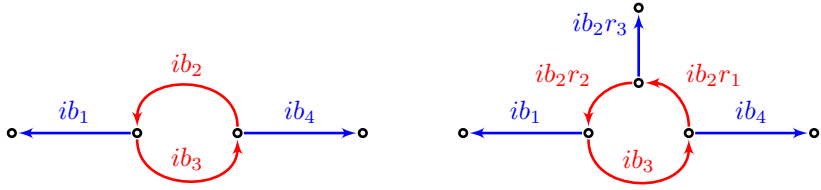

If we name each edge of the base and replacement graphs by distinct symbols in an alphabet $\mathbb{A}$, then we can represent the edges of an expansion of the base graph as certain finite words in the alphabet $\mathbb{A}$.
The aforementioned \cref{fig:expansion:A} shows, for example, that the edges newly obtained by performing the expansions are words of length two or three (in this case the first letter is always $i$ because $i$ is the only edge of the base graph, so it's the only possible first expansion).

\subsection{Limit Spaces}
\label{sub:limit:spaces}

The limit space of a replacement system is built as a quotient of a Cantor space $\Omega$ by an equivalence relation.
Here are their definitions.

\begin{definition}
Given a replacement system $\mathcal{R}$ and the alphabet $\mathbb{A}$ consisting of the edges of the base and replacement graphs of $\mathcal{R}$, the \textbf{symbol space} $\Omega$ of $\mathcal{R}$ is the set of all infinite words in $\mathbb{A}$ such that each finite prefix represents an edge of some expansion.
Moreover, we say that such prefixes are \textbf{allowed words}.
\end{definition}

In essence, an element of $\Omega$ is an infinite sequence of edges, each ``nested'' below the previous one via expansions.
This space is actually a one-sided shift of finite type, but there is no need to be familiar with that machinery to have a clear understanding of what $\Omega$ is.

We will only be considering replacement systems that satisfy the natural requirements expressed in the next Definition.

\begin{definition}
\label{def:expanding}
A replacement system $\mathcal{R}$ is \textbf{expanding} if
\begin{enumerate}
    \item its base and replacement graphs do not have isolated vertices;
    \item the initial and terminal vertices $\iota$ and $\tau$ of each replacement graph are not adjacent;
    \item each replacement graph has at least three vertices and two edges.
\end{enumerate}
\end{definition}

When equipped with the subspace topology inherited from the product topology of $\mathbb{A}^\infty$ (which is a Cantor space), the symbol space $\Omega$ is always compact, metrizable and totally disconnected.
Indeed, it inherits a basis of clopen sets from the cones of $\mathbb{A}^\infty$ (a \textbf{cone} is the subspace consisting of all of those sequences that have a fixed common prefix).
Then $\Omega$ is compact because it is an intersection of cones and it is metrizable and totally disconnected because $\mathbb{A}^\infty$ is.
Additionally, when the replacement system is expanding, $\Omega$ is a Cantor space, since it has no isolated point because every cone contains multiple elements of $\Omega$ by condition (3) of \cref{def:expanding}.
%

\begin{definition}
\label{def:gluing}
Given a replacement system $\mathcal{R}$ and its symbol space $\Omega$, we define the \textbf{gluing relation $\boldsymbol{\sim}$} as
\[ \alpha \sim \beta \iff \alpha_1 \dots \alpha_n \text{ and } \beta_1 \dots \beta_n \text{ are equal or adjacent edges in } \Gamma_n, \forall n \in \mathbb{N}, \]
where $\alpha = \alpha_1 \alpha_2 \dots$ and $\beta = \beta_1 \beta_2 \dots$ are sequences in $\Omega$, and $\Gamma_n$ is the sequence of graphs obtained by expanding, at each step, every edge of the previous graph, starting from the base graph $\Gamma$ (thus, $\Gamma_n$ contains precisely those edges that correspond to allowed finite words of length $n$).
\end{definition}

By \cite[Proposition 1.9]{belk2016rearrangement}, the gluing relation defined by an expanding replacement system is an equivalence relation, so the following definition makes sense.

\begin{definition}
\label{def:limit:space}
Given an expanding replacement system $\mathcal{R}$, its \textbf{limit space} is the quotient topological space of its symbol space $\Omega$ by the gluing relation $\sim$.
For all $\omega \in \Omega$, we will denote its image via the quotient map by $\llbracket \omega \rrbracket$.
\end{definition}

Continuing with our example, the limit space of the Airplane replacement system, simply called the Airplane limit space, is depicted in \cref{fig:airplane:limit:space}.
It is homeomorphic to the Julia set for the complex map $z \mapsto z^2 - 1.755$.
In \cref{sub:den} we will build replacement systems whose limit spaces are Wa\.zewski dendrites.

\begin{figure}
\centering
\includegraphics{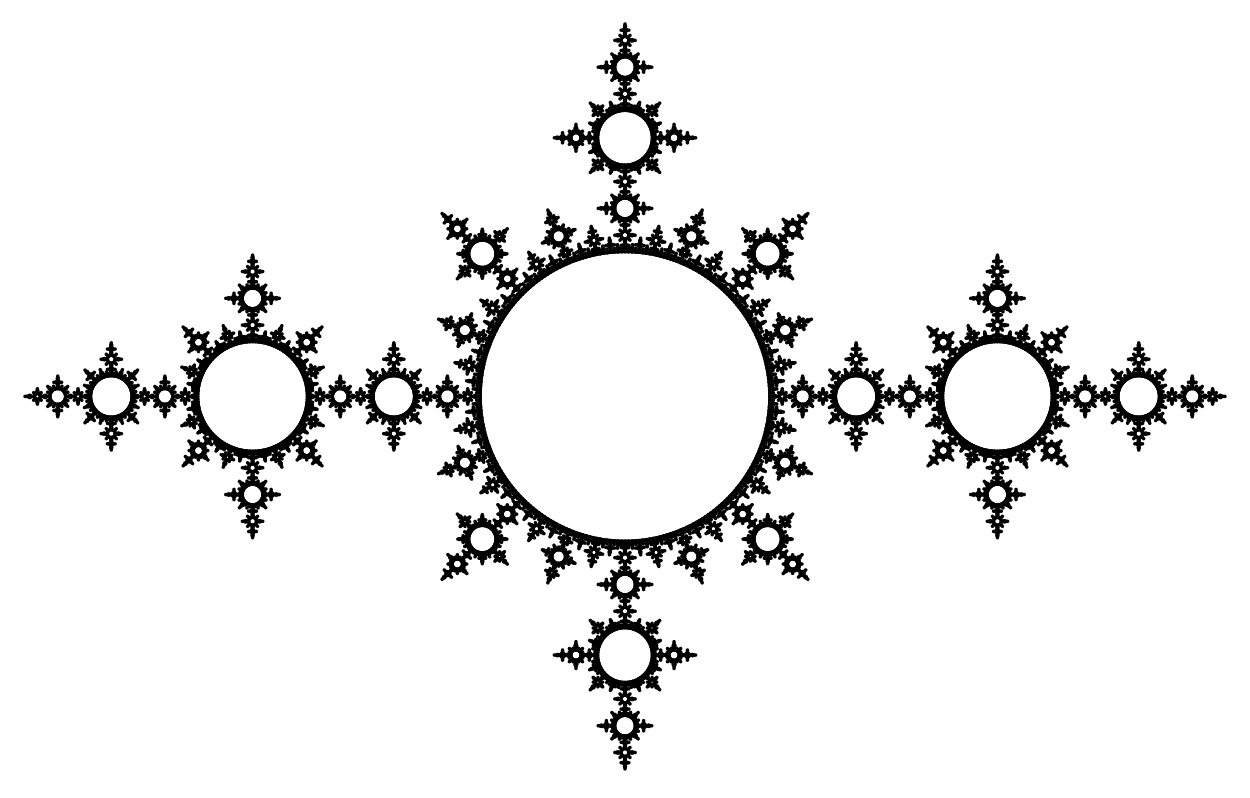}
\caption{The Airplane limit space.}
\label{fig:airplane:limit:space}
\end{figure}

By \cite[Theorem 1.25]{belk2016rearrangement}, limit spaces are compact metric spaces.
We remark here that, while fractals are usually thought of subsets of a Euclidean space, the limit spaces produced here are only defined as topological spaces.
For example, under this point of view the famous Koch snowflake is to be thought of as topologically equivalent to a circle.

\subsection{Rearrangement Groups}
\label{sub:rearrangements}

Given an expanding replacement system $\mathcal{R}$ and its limit space $X$, consider an edge $e$ of some expansion of the base graph (i.e., any allowed finite word).
We define the \textbf{cell $\boldsymbol{C(e)}$} as the image of the cone of those sequences of the symbol space that have $e$ as a prefix via the quotient map of the gluing relation.
It will often be useful to consider the topological interior of a cell $C(e)$, which will be denoted by $C^\mathrm{o}(e)$.

Each cell $C(e)$ has a \textbf{type}, distinguished by whether the edge $e$ is a loop or not and on its color.
For example, the Airplane limit space admits two types or cells, only distinguished by their color, as there are no loops among the expansions of the base graph.
Two cells $C(e_1)$ and $C(e_2)$ of the same type admit a natural \textbf{canonical homeomorphism}, which is given by the map that changes the prefix $e_1$ with $e_2$.
Depending on whether it is a loop or not, a cell $C(e)$ has one or two endpoints, which correspond to the initial and terminal vertices of the edge $e$.

To each expansion $\Lambda$ of the base graph we associate a \textbf{cellular partition} of the limit space $X$ (i.e., a cover of $X$ by finitely many cells whose interiors are disjoint, meaning that they may only intersect at the endpoints of the cells) as follows:
\[ \Lambda \mapsto \{ C(e) \mid e \text{ is an edge of } \Lambda \}, \]
which is a bijection between the set of expansions of the base graph and that of cellular partitions.
The notions of canonical homeomorphisms and cellular partitions prompt the following definition:

\begin{definition}
\label{def:rearrangement}
Given an expanding replacement system $\mathcal{R}$ with limit space $X$, a \textbf{rearrangement} is a homeomorphism of $X$ that restricts to a canonical homeomorphism between the cells of some cellular partition.
\end{definition}

It is not hard to see that rearrangements form a group under composition, so we can talk about \textbf{rearrangement groups} of a given expanding replacement system.

Rearrangements naturally correspond to graph isomorphisms (which need to preserve edge orientation and colors) between two expansions of the base graph:
if the graph isomorphism maps an edge $e_1$ to $e_2$, then the rearrangement restricts to a canonical homeomorphism from the cell $C(e_1)$ to $C(e_2)$.
A triple $(D,R,f)$ given by two expansions $D$ and $R$ of the base graph and a graph isomorphism $f: D \to R$ is called a \textbf{graph pair diagram} for the induced rearrangement.
Each rearrangement has multiple graph pair diagrams that represent it (one can always ``expand'' a graph pair diagram by expanding an edge $e$ in the domain graph and its image under the graph isomorphism in the range graph), but there always exists a unique reduced graph pair diagram for each rearrangement.

Finally, we remark here that rearrangement groups act properly by isometries on CAT(0) cubical complexes, as explained in Section 3 of \cite{belk2016rearrangement}.
In particular, an application of \cite[Theorem 1.2]{MR2671183} to the action of a rearrangement group on the 1-skeleton of this complex yields that rearrangement groups do not have Kazhdan's Property (T), nor do any of their infinite subgroups (such as the commutator subgroup of dendrite rearrangement groups).

\subsection{Thompson Groups}

Thompson's group $F$ is the group of those orientation-preserving homeomorphisms of the unit interval $[0,1]$ that are piecewise linear with slopes that are powers of $2$ and breakpoints that are dyadic (i.e., they can be written as fractions whose denominator is a power of $2$, or equivalently they belong to $\mathbb{Z}[\frac{1}{2}]$).
Thompson groups $T$ and $V$ can be defined in a similar fashion, with the difference that $T$ acts on the unit circle and $V$ on the Cantor space in place of the unit interval.
We refer to \cite{cfp} for more general information about Thompson groups, while \cite{ThF} is specifically about $F$.

As mentioned earlier, Thompson groups $F$, $T$ and $V$ can all be realized as rearrangement groups;
\cref{fig:thompson:F} depicts a replacement system for $F$, which in the following is going to be featured much more prominently than the other two.
In particular, we are going to frequently use the transitivity properties of $F$ on the set of dyadic points of $[0,1]$ and the fact that $F$ is generated by the two elements portrayed in \cref{fig:thompson:F:gen}.
Moreover, in \cref{sec:comm} we are going to use the fact that the commutator subgroup $[F,F]$ consists precisely of those elements of $F$ that act trivially in some neighbourhood of $0$ and of $1$.

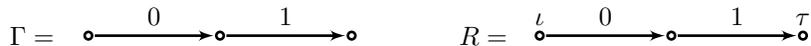
\begin{figure}
\centering
\begin{tikzpicture}
    \node at (-.75,0) {$\Gamma =$};
    \node[vertex] (L) at (0,0) {};
    \node[vertex] (C) at (1.75,0) {};
    \node[vertex] (R) at (3.5,0) {};
    \draw[edge] (L) to node[above]{$0$} (C);
    \draw[edge] (C) to node[above]{$1$} (R);
    \begin{scope}[xshift=6cm]
    \node at (-.75,0) {$R =$};
    \node[vertex] (L) at (0,0) {}; \draw (0,0) node[above]{$\iota$};
    \node[vertex] (C) at (1.75,0) {};
    \node[vertex] (R) at (3.5,0) {}; \draw (3.5,0) node[above]{$\tau$};
    \draw[edge] (L) to node[above]{$0$} (C);
    \draw[edge] (C) to node[above]{$1$} (R);
    \end{scope}
\end{tikzpicture}
\caption{A replacement system for Thompson's group $F$.}
\label{fig:thompson:F}
\end{figure}

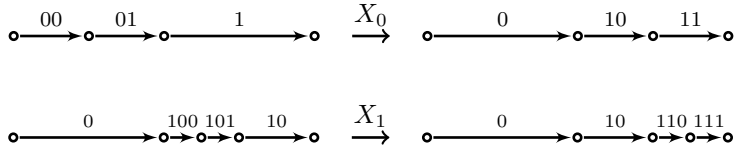
\begin{figure}
\centering
\begin{tikzpicture}
    \node[vertex] (0) at (0,0) {};
    \node[vertex] (1/4) at (1,0) {};
    \node[vertex] (1/2) at (2,0) {};
    \node[vertex] (1) at (4,0) {};
    \draw[edge] (0) to node[above]{\footnotesize$00$} (1/4);
    \draw[edge] (1/4) to node[above]{\footnotesize$01$} (1/2);
    \draw[edge] (1/2) to node[above]{\footnotesize$1$} (1);
    \draw[->] (4.5,0) to node[above]{$X_0$} (5,0);
    \begin{scope}[xshift=5.5cm]
    \node[vertex] (0) at (0,0) {};
    \node[vertex] (1/2) at (2,0) {};
    \node[vertex] (3/4) at (3,0) {};
    \node[vertex] (1) at (4,0) {};
    \draw[edge] (0) to node[above]{\footnotesize$0$} (1/2);
    \draw[edge] (1/2) to node[above]{\footnotesize$10$} (3/4);
    \draw[edge] (3/4) to node[above]{\footnotesize$11$} (1);
    \end{scope}
\end{tikzpicture}
\\
\vspace{20pt}
\begin{tikzpicture}
    \node[vertex] (0) at (0,0) {};
    \node[vertex] (1/2) at (2,0) {};
    \node[vertex] (5/8) at (2.5,0) {};
    \node[vertex] (3/4) at (3,0) {};
    \node[vertex] (1) at (4,0) {};
    \draw[edge] (0) to node[above]{\scriptsize$0$} (1/2);
    \draw[edge] (1/2) to node[above]{\scriptsize$100$} (5/8);
    \draw[edge] (5/8) to node[above]{\scriptsize$101$} (3/4);
    \draw[edge] (3/4) to node[above]{\scriptsize$10$} (1);
    \draw[->] (4.5,0) to node[above]{$X_1$} (5,0);
    \begin{scope}[xshift=5.5cm]
    \node[vertex] (0) at (0,0) {};
    \node[vertex] (1/2) at (2,0) {};
    \node[vertex] (3/4) at (3,0) {};
    \node[vertex] (7/8) at (3.5,0) {};
    \node[vertex] (1) at (4,0) {};
    \draw[edge] (0) to node[above]{\scriptsize$0$} (1/2);
    \draw[edge] (1/2) to node[above]{\scriptsize$10$} (3/4);
    \draw[edge] (3/4) to node[above]{\scriptsize$110$}  (7/8);
    \draw[edge] (7/8) to node[above]{\scriptsize$111$} (1);
    \end{scope}
\end{tikzpicture}
\caption{The generators $X_0$ and $X_1$ of Thompson's group $F$.}
\label{fig:thompson:F:gen}
\end{figure}


\section{Rearrangement Groups for Wa\.zewski Dendrites}
\label{sec:dendrite:rearrangement:groups}

In this Section we introduce the main topic of this work:
the Thompson-like groups $G_n$, which will be defined in \cref{sub:den} as rearrangement groups starting from their replacement systems $\mathcal{D}_n$ for the Wa\.zewski dendrites $D_n$.
In \cref{sub:points:arcs} we define branch points, endpoints and (rational) arcs of $D_n$, which will be important in the remainder of this work.
Finally, in \cref{sub:subgroups} we start discussing two fundamental subgroups of the $G_n$'s:
a copy of $\mathrm{Sym}(n)$ that rigidly permutes the branches around a specific branch point $p_0$ and a copy of Thompson's group $F$ acting on a certain linear portion of the dendrite.

\subsection{Replacement Systems for Dendrites}
\label{sub:den}

In this Subsection we define the replacement systems $\mathcal{D}_n$ that produce the Wa\.zewski dendrites $D_n$ as limit spaces, for any natural number $n \geq 3$.
\cref{fig:rep:sys} schematically depicts the generic replacement system $\mathcal{D}_n$, which is defined with more precision as follows.
The \textbf{dendrite replacement system} $\boldsymbol{\mathcal{D}_n}$ is the monochromatic replacement system whose base and replacement graphs are the same tree consisting of a vertex of degree $n$ that is the origin of $n$ edges terminating at $n$ distinguished leaves;
the initial and terminal vertices $\iota$ and $\tau$ of the replacement graph are two distinct leaves.

\begin{figure}
\centering
\begin{tikzpicture}
    \node at (-2,0) {$\Gamma =$};
    \node[vertex] (c) at (0,0) {};
    \node[vertex] (1) at (0:1.5) {};
    \node[vertex] (2) at (72:1.5) {};
    \node[vertex] (3) at (144:1.5) {};
    \node[vertex] (n-1) at (216:1.5) {};
    \node[vertex] (n) at (288:1.5) {};
    \draw[edge] (c) to node[above]{$1$} (1);
    \draw[edge] (c) to node[above left]{$2$} (2);
    \draw[edge,dotted] (c) to (3);
    \draw[edge,dotted] (c) to (n-1);
    \draw[edge] (c) to node[right]{$n$} (n);
    \begin{scope}[xshift=7cm]
    \node at (-2.5,0) {$R =$};
    \node[vertex] (b) at (0,0) {};
    \node[vertex] (i) at (180:1.5) {}; \draw (0:-1.5) node[above]{$\iota$};
    \node[vertex] (r2) at (120:1.5) {};
    \node[vertex] (rn-1) at (60:1.5) {};
    \node[vertex] (t) at (0:1.5) {}; \draw (0:1.5) node[above]{$\tau$};
    \draw[edge] (b) to node[below]{$1$} (i);
    \draw[edge,dotted] (b) to (r2);
    \draw[edge,dotted] (b) to (rn-1);
    \draw[edge] (b) to node[below]{$n$} (t);
    \end{scope}
\end{tikzpicture}
\caption{A schematic depiction of the dendrite replacement system $\mathcal{D}_n$ (the dotted lines are meant to indicate that each graph has a total of $n$ edges).}
\label{fig:rep:sys}
\end{figure}

\phantomsection\label{txt:rep:sys:den}
The replacement system $\mathcal{D}_n$ is expanding (\cref{def:expanding}), so the gluing relation is an equivalence relation (Propositions 1.9 and 1.22 of \cite{belk2016rearrangement}) and the limit space of $\mathcal{D}_n$ is defined as in \cref{def:limit:space}.
When endowed with the quotient topology induced by the gluing relation (see Subsection 1.5 of \cite{belk2016rearrangement}), the limit space is a non-empty compact metrizable space (by \cite[Theorem 1.25]{belk2016rearrangement}), it is connected (because both the base graph and the replacement graphs are connected, see \cite[Remark 1.27]{belk2016rearrangement}) and it is locally connected (proven in \cref{lem:loc:conn} below).
It contains no simple closed curve, as none of the expansions of $\mathcal{D}_n$ contain simple closed paths.
Thus, the limit space of $\mathcal{D}_n$ is a dendrite (\cref{def:dendrite}), and since its points have order $1$, $2$ or $n$, it is homeomorphic to the Wa\.zewski dendrite $D_n$ (\cref{def:wazewski:dendrite}).
We will thus use the same symbol $D_n$ to denote both the Wa\.zewski dendrite and the limit space of $\mathcal{D}_n$.

\begin{remark}
\label{rmk:undirected}
Note that the edges of $\mathcal{D}_n$ are \textbf{undirected}, by which we mean that we can completely forget about their edge orientation without losing any meaningful information.
This is because the expansion of an edge going from a vertex $v$ to a vertex $w$ would be exactly the same if that edge were instead going from $w$ to $v$.
More detail and examples of this phenomenon can be found at the end of Subsections 3.2 of \cite{Airplane} and in \cite[Remark 1.5]{RearrConj}.
In practice, this simply means that we will be able to forget about the orientation of every edge altogether when representing rearrangements, because if an edge $x$ is mapped to an edge $y$ and the orientation is swapped, then what is really happening is $xi \mapsto x \tilde{i}$, where $\tilde{i} = n+1-i$, for all $i \in \{1, \dots, n\}$.
The orientation is still important to correctly describe the gluing relation and codify cells.
\end{remark}

\subsection{Branch Points, Branches, Endpoints and Arcs}
\label{sub:points:arcs}

In this Subsection we give a few definition that will be useful throughout this work.

\begin{definition}
\label{def:branch}
A \textbf{branch point} of $D_n$ is a point $p \in D_n$ such that $D_n \setminus \{p\}$ has exactly $n$ connected components.
Each of these connected components is called a \textbf{branch} of $D_n$ at the branch point $p$.
\end{definition}

Observe that each branch at $p$ is the topological interior of the union of finitely many cells, one of which has boundary $\{ p \}$.
However, note that not every topological interior of the connected union of finitely many cells is a branch.

We will often refer to the following specific branch point:
\[ \llbracket 11\bar{n} \rrbracket =  \llbracket 21\bar{n} \rrbracket = \dots = \llbracket n1\bar{n} \rrbracket, \]
where the bar on top of $n$ denotes periodicity.
We call this point the \textbf{central branch point} and we denote by $\boldsymbol{p_0}$.
There is nothing topologically special about this branch point, but its nice combinatorial description will make it handy in the remainder of this work.

\begin{definition}
\label{def:ext}
An \textbf{endpoint} of $D_n$ is a point $q \in D_n$ such that $D_n \setminus \{q\}$ has exactly one connected component.
\end{definition}

Given a subset $X$ of $D_n$, we denote by $\boldsymbol{\mathrm{Br}(X)}$ and $\boldsymbol{\mathrm{En}(X)}$ the sets of all branch points and endpoints, respectively, that are contained in $X$.
When $X = D_n$ itself, we simply write $\boldsymbol{\mathrm{Br}}$ and $\boldsymbol{\mathrm{En}}$.

\begin{remark}
\label{rmk:br:en:dns}
\cite[Remark 3.2]{IG} states that a subset of the limit space is dense if and only if it intersects non-trivially every cell.
Since every cell contains branch points and endpoints, we can conclude that both $\mathrm{Br}(X)$ and $\mathrm{En}(X)$ are dense in $X$ whenever $X$ is a cell or the entire limit space $D_n$.
It is also useful to note that $\mathrm{Br}$ is countable, whereas $\mathrm{En}$ is uncountable.
\end{remark}

Note that every vertex of a graph expansion of $\mathcal{D}_n$ either corresponds to a branch point or an endpoint, depending on whether its degree is $n$ or $1$.
Moreover, every branch point is a vertex of some graph expansion, but not every endpoint is a vertex, since $\mathrm{En}$ is uncountable.
For concrete examples of endpoints that are not vertices, consider any sequence that does not contain $1$ and that is not eventually repeating.

We say that an endpoint is a \textbf{rational endpoint} if it is a vertex of some graph expansion.
We denote by $\boldsymbol{\mathrm{REn}(X)}$ the set of those rational endpoints that are contained in $X \subseteq D_n$, and we simply write $\boldsymbol{\mathrm{REn}}$ when $X$ is the entire dendrite $D_n$.
Note that, by the same reason mentioned in \cref{rmk:br:en:dns}, $\mathrm{REn}$ is dense in $D_n$ because every cell contains rational endpoints.

Being a rational endpoint is not a topological property:
the set $\mathrm{REn}$ depends on the chosen homeomorphism between the limit space and the Wa\.zewski dendrite $D_n$, and in general a homeomorphism need not preserve the rationality of an endpoint.
Since we have already fixed a homeomorphism between the limit space and the dendrite to begin with, there will be no need to distinguish between endpoints that are rational under a homeomorphism but not under another.

Differently from generic homeomorphisms, it is easy to see that an element of the rearrangement group $G_n$ must induce a permutation of $\mathrm{REn}$.

On the other hand, every branch point of $D_n$ is rational, since it is a vertex of some graph expansion.
In this sense, branch points are more useful when dealing with rearrangements, since the set of branch points is invariant under any homeomorphism.

\begin{remark}
\label{rmk:br:ex:pts}
Branch points and rational endpoints have nice and useful combinatorial characterizations based on the sequences that represent them in $\mathcal{D}_n$.
\begin{itemize}
    \item Each branch point of $D_n$ corresponds exactly to the $n$ sequences
    \[ x 1 1 \bar{n}, \: x 2 1 \bar{n}, \: \dots, \: x n 1 \bar{n} \]
    for a unique finite word $x$ (possibly empty).
    In particular, there are natural bijections between the set $\mathrm{Br}$ of branch points of $D_n$, the set of finite words $x$ in the alphabet $\{1, \dots, n\}$ (including the empty word) and the set that consists of the cells $C(x)$ of $\mathcal{D}_n$ together with the entire $D_n$ (which can be thought of as the cell associated to the empty word).
    \item Each rational endpoint of $D_n$ corresponds uniquely to a sequence
    \[ i \bar{n} \text{ or } x \bar{n} \]
    for a unique $i \in \{1, \dots, n\}$, or for a unique finite non-empty word $x$ in the alphabet $\{1, \dots, n\}$ that does not end with $1$ nor with $n$.
\end{itemize}
\end{remark}

\begin{lemma}
\label{lem:loc:conn}
For all $n \geq 3$, the limit space $D_n$ is locally connected.
\end{lemma}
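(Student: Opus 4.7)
The plan is to use the classical criterion that a compact metric space is locally connected provided that, for every $\epsilon > 0$, it admits a finite cover by closed connected subsets of diameter less than $\epsilon$. Given such a cover $\{A_1,\dots,A_N\}$, the set $V_p := \bigcup\{A_i : p \in A_i\}$ is a closed connected neighborhood of $p$ of diameter less than $2\epsilon$: the remaining pieces form a closed set avoiding $p$, hence at positive distance from $p$, so $V_p$ contains a ball around $p$. Every point therefore has a basis of closed connected neighborhoods, which is equivalent to local connectedness in a metric space (one then shows that the connected component of any point in an open set is itself open).

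I would apply this criterion to $D_n$ using the cellular partitions $\Lambda_k$ obtained by simultaneously expanding every edge $k$ times. Each cell $C(e)$ of $\Lambda_k$ is closed, and is moreover homeomorphic to $D_n$ via the canonical identification: this is legitimate because both the base and the replacement graphs of $\mathcal{D}_n$ are $n$-pronged stars, so the replacement graph agrees with the base graph up to the labeling of $\iota$ and $\tau$. Consequently each cell is connected, since $D_n$ itself is connected by \cite[Remark 1.27]{belk2016rearrangement}. The expanding condition on $\mathcal{D}_n$ additionally implies, by a standard result in the theory of limit spaces (cf.~\cite{belk2016rearrangement}), that the diameters of cells of $\Lambda_k$ decay uniformly to zero as $k \to \infty$. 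Hence for any $\epsilon > 0$, taking $k$ sufficiently large produces the required finite cover of $D_n$ by closed connected subsets of diameter $< \epsilon$, from which local connectedness follows.

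The main obstacle, if any, lies in the two ingredients borrowed from \cite{belk2016rearrangement}: the canonical homeomorphism of each cell with the whole $D_n$, which hinges on the base and replacement graphs sharing the same unlabeled $n$-star structure, and the uniform decay of cell diameters to zero under iterated subdivision. Both are standard in the Belk–Forrest framework, but are worth pointing out explicitly for this particular replacement system.
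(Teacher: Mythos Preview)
Your argument is correct and follows a genuinely different route from the paper's proof. The paper works pointwise: it distinguishes whether $p$ is a vertex of some graph expansion or not, and in each case writes down an explicit basis of connected open neighborhoods at $p$, built from interiors of cells adjacent to (or containing) $p$. Your approach is global: you invoke the Sierpi\'nski-type criterion (property~S) that a compact metric space is locally connected once it admits, for every $\epsilon>0$, a finite cover by closed connected sets of small diameter, and you realize these covers by the cellular partitions $\Lambda_k$. This avoids the case split entirely and is arguably cleaner; the paper's version, in exchange, yields concrete neighborhood bases that are sometimes useful elsewhere.

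Two small remarks on your ingredients. First, for connectedness of cells you do not actually need the full claim that each cell is homeomorphic to $D_n$; connectedness of cells follows already from connectedness of the replacement graph (the paper itself uses exactly this fact). The stronger homeomorphism statement is true here because base and replacement graphs coincide, but it is more than you need. Second, the uniform decay of cell diameters is indeed part of the Belk--Forrest machinery (it underlies the construction of the metric realizing the quotient topology), so your citation is appropriate; it is worth making that reference precise if you write this up, since the argument stands or falls on it.
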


\begin{proof}
Given a point $p \in D_n$, we want to build a basis of connected open sets at $p$.
We distinguish the case in which $p$ corresponds to a vertex and the case in which it does not, and we will use the fact that each cell is connected (because the replacement graph is connected).

If $p$ corresponds to a vertex of some graph expansion, then it is codified by a finite amount of sequences $x \alpha^{(1)}, \dots, x \alpha^{(m)}$ with a common prefix $x$ (possibly empty) and $m$ infinite sequences $\alpha^{(1)}, \dots, \alpha^{(m)}$.
More precisely, by \cref{rmk:br:ex:pts} if $p$ is a branch point then $m=n$ and if $p$ is a rational endpoint then $m=1$.
In both cases, a basis of connected open sets is given by the cells corresponding to the edges that are adjacent to $p$ in the $k$-th full expansion of $\mathcal{D}_n$ in the following way:
\[ \left\{ \{p\} \cup C^\mathrm{o}(x \alpha^{(1)}_1 \dots \alpha^{(1)}_k) \cup \dots \cup C^\mathrm{o}(x \alpha^{(m)}_1 \dots \alpha^{(m)}_k) \mid k \in \mathbb{N} \right\}. \]

Now consider a point $p$ that is not a vertex.
It corresponds to a unique infinite sequence $\omega = \omega_1 \omega_2 \dots$, and must belong to the interior of every $C(\omega_1 \dots \omega_k)$, so a basis of connected open sets is simply given by
\[ \{ C^\mathrm{o}(\omega_1 \dots \omega_k) \mid k \in \mathbb{N} \}. \]
\end{proof}

\begin{definition}
\label{def:arc}
Since dendrites do not contain simple closed curves, any two points are joined by a unique topological arc, which we simply call an \textbf{arc} of $D_n$.
Given two points $p$ and $q$, we denote by $\boldsymbol{[p,q]}$ the unique arc joining them.
\end{definition}

We already mentioned arcs in \cref{def:wazewski:dendrite}, describing them as homeomorphic copies of $[0,1]$;
naturally these two definitions are equivalent, but it is clear that, for dendrites, the existence and uniqueness of arcs for each pair of distinct points is worth remarking and plays a significant role in their topological properties and in the kind of homeomorphisms that are available on them.
Indeed, this notion will be important for identifying the Thompson subgroups described later in \cref{sub:thomp}, where we will consider the following special types of arcs.

\begin{definition}
\label{def:arc:type}
We say that an arc $A = [p,q]$ is a \textbf{rational arc} if $p$ and $q$ are vertices of some graph expansion of $\mathcal{D}_n$.
Each rational arc $[p,q]$ is of one of the following three types:
\begin{itemize}
    \item it is a \textbf{BB-arc} (\textit{branch point to branch point}) if $p, q \in \mathrm{Br}$;
    \item it is an \textbf{EE-arc} (\textit{rational endpoint to rational endpoint}) if $p, q \in \mathrm{REn}$;
    \item it is an \textbf{BE-arc} (\textit{branch point to rational endpoint}) if $p \in \mathrm{Br}$ and $q \in \mathrm{REn}$ or if $p \in \mathrm{REn}$ and $q \in \mathrm{Br}$.
\end{itemize}
\end{definition}

Similarly to the rationality of an endpoint, the rationality of an arc is not a topological property, meaning that homeomorphisms need not preserve it.
It is true, however, that a rearrangement must map a rational arc to a rational arc and a non-rational arc to a non-rational arc.
In practice, we will only be working with arcs that are rational throughout the rest of this paper.

We will frequently use the following correspondence between paths in a graph expansion and rational arcs.

\begin{remark}
\label{rmk:paths}
There is a natural surjective correspondence from the set of paths in graph expansions to the set of all rational arcs, which is the following.
If $e = x_1 \dots x_k$ is an edge of some graph expansion, we associate to it the arc $[p,q]$ for $p = \llbracket x_1 \dots x_k 1 \bar{n} \rrbracket$ and $q = \llbracket x_1 \dots x_k \bar{n} \rrbracket$.
This arc is precisely the set of points of $D_n$ that are represented by some sequence $x_1 \dots x_k \omega$ for $\omega$ in the alphabet $\{1,n\}$.
Inductively extending this construction gives the desired correspondence from paths of a graph expansions to rational arcs, which is surjective because clearly we can find a path between any two points of $D_n$ that are vertices.
In short, if $P$ is the path consisting of the edges $e_1, \dots, e_l$ (all of which are finite words), then the associated arc is
\[ A = \big\{ \llbracket e_i \omega \rrbracket \mid i=1, \dots, n \text{ and } \omega \text{ is a sequence in } \{1,n\} \big\}. \]
This correspondence is not injective, but the preimage of each rational arc contains a unique path with a minimal number of edges, which is obtained from any other edge of the same preimage by performing as many graph reductions as possible.
When restricted to the set of these minimal paths, this correspondence is a bijection.
\end{remark}

\begin{remark}
\label{rmk:arcs:dyadic}
For each EE-arc $A$, there is a canonical homeomorphism between $A$ and the unit interval $[0,1]$ that induces a bijection between the branch points of $A$ and the dyadic points of $(0,1)$.
The homeomorphism is built as described below using the path $P$ from the previous \cref{rmk:paths}.

Given a single edge $e = x_1 \dots x_l$, we consider the homeomorphism
\[ \Phi_e: \big\{ \llbracket e \omega \rrbracket \mid \omega \text{ is a sequence in } \{1,n\} \big\} \to \big[ 0, 2^{-l} \big] \]
that is built inductively by letting $e 1$ and $e n$ correspond to $2^{-l} [0,1/2]$ and $2^{-l} [1/2,1]$ in the natural order-preserving fashion, and so forth for further edge expansions.

Given a path $P$ of edges $e_1, \dots, e_k$, we build $\Phi_P$ as the concatenation of the $\Phi_{e_i}$'s, by which we mean that the first $\Phi_{e_1}$ maps to $[0, 2^{-l_1}]$, the second $\Phi_{e_2}$ to the adjacent interval of length $2^{-l_2}$ (which is $[2^{-l_1}, 2^{-l_1}+2^{-l_2}]$) and so forth.
For a generic path $P$, this covers some dyadic interval $[0, z] \subseteq [0,1]$, for some $z \in \mathbb{Z}[1/2]$, and it is easy to see that $\Phi_P (A) = [0,1]$ if and only if $P$ corresponds to an EE-arc.

One may note that this construction is very similar to the usual homeomorphism between $[0,1]$ and the limit space of the replacement system for Thompson's group $F$ (depicted in \cref{fig:thompson:F}), which is simply given by identifying each number in $[0,1]$ with its binary expansion.
For the replacement system of dendrite replacement systems (\cref{fig:rep:sys}), this is less straightforward due to the fact that the edge $1$ is directed towards the initial vertex.
\end{remark}

\subsection{Fundamental Subgroups of Dendrite Rearrangement Groups}
\label{sub:subgroups}

From here onward, given an $n \geq 3$, we will denote by $G_n$ the rearrangement group of $\mathcal{D}_n$, and we now begin its study.

In general, even if a replacement system produces an appealing limit space, its rearrangement group might be uninteresting or even trivial (see \cite[Example 2.5]{belk2016rearrangement}).
Here, however, we prove that every dendrite replacement system yields a rearrangement group that contains many copies of Thompson's group $F$ (\cref{sub:thomp}), immediately hinting at the fact that these rearrangement groups are, in fact, quite large.
How large they are will be made clear with \cref{thm:dense}, stating that each dendrite rearrangement group $G_n$ is dense in the full group of homeomorphisms of the Wa\.zewski dendrite $D_n$.

\subsubsection{The Permutation Subgroup \texorpdfstring{$K$}{K}}
\label{sub:perm}

The first fundamental subgroup of $G_n$ that we discuss is an isomorphic copy of the symmetric group that acts on the branches at the central branch point $p_0 = \llbracket i 1 \bar{n} \rrbracket$ (for any choice of $i \in \{1, \dots, n\}$).

\begin{lemma}
\label{lem:K}
For every $n \geq 3$, the rearrangement group $G_n$ of $\mathcal{D}_n$ contains an isomorphic copy $K$ of the symmetric group $\mathrm{Sym}(n)$ that acts on $D_n$ by canonically permuting the branches at $p_0$.
\end{lemma}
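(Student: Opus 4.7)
The plan is to read off $K$ directly from the symmetry of the base graph $\Gamma$. The base graph is a star: a central vertex $c$ together with $n$ edges $1,\dots,n$ going out to $n$ distinct leaves $\ell_1,\dots,\ell_n$, and $\Gamma$ itself is the trivial expansion giving the coarsest cellular partition $\{C(1),\dots,C(n)\}$ of $D_n$. These $n$ cells meet at exactly one point (the image of $c$ under the quotient map), and by \cref{rmk:br:ex:pts} that point is precisely the central branch point $p_0$. Removing $p_0$ from $D_n$ leaves $n$ connected components, one contained in the interior of each $C(i)$; these are the $n$ branches at $p_0$.

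First I would construct, for each $\sigma\in\mathrm{Sym}(n)$, a graph automorphism $f_\sigma:\Gamma\to\Gamma$ by fixing $c$, sending $\ell_i\mapsto\ell_{\sigma(i)}$, and sending edge $i$ to edge $\sigma(i)$. Since every edge of $\Gamma$ has initial vertex $c$ and terminal vertex a leaf, orientation is automatically preserved (and \cref{rmk:undirected} would render this a non-issue in any case); moreover, $\mathcal{D}_n$ is monochromatic, so there are no color constraints. By the discussion after \cref{def:rearrangement}, the triple $(\Gamma,\Gamma,f_\sigma)$ is therefore a graph pair diagram, and induces a rearrangement $K_\sigma\in G_n$ whose restriction to each $C(i)$ is the canonical homeomorphism $C(i)\to C(\sigma(i))$.

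Next I would verify that $K:=\{K_\sigma\mid\sigma\in\mathrm{Sym}(n)\}$ is a subgroup of $G_n$ isomorphic to $\mathrm{Sym}(n)$. For composition: a point of $C(i)$ is sent by $K_\tau$ to $C(\tau(i))$ via the canonical homeomorphism, then by $K_\sigma$ to $C(\sigma\tau(i))$ via the canonical one, and the composition of canonical homeomorphisms between cells of the same type is again canonical, so $K_\sigma\circ K_\tau=K_{\sigma\tau}$. For injectivity, if $K_\sigma=\mathrm{id}_{D_n}$ then each $C(i)$ must be mapped to itself, forcing $\sigma(i)=i$ for all $i$ since the cells $C(1),\dots,C(n)$ are pairwise distinct. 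Hence $\sigma\mapsto K_\sigma$ is a group embedding $\mathrm{Sym}(n)\hookrightarrow G_n$.

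Finally, since canonical homeomorphisms between cells match endpoints to endpoints, each $K_\sigma$ fixes the common boundary point $p_0$ and carries the branch contained in $C(i)$ to the branch contained in $C(\sigma(i))$, which is exactly the claimed action. I do not foresee any real obstacle: the one place that warrants caution is confirming that $f_\sigma$ really is a graph isomorphism of $\Gamma$ (not merely a relabeling), but this is immediate from the fact that all edges of $\Gamma$ are incident at $c$ on the same side.
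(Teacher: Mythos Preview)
Your proof is correct and follows exactly the same approach as the paper: identify $K$ with the automorphism group of the base graph $\Gamma$, using the fact that graph pair diagrams with domain and range both equal to $\Gamma$ give rearrangements that canonically permute the cells $C(1),\dots,C(n)$. The paper's version is terser, but your explicit verification of the homomorphism and injectivity is entirely sound.
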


\begin{proof}
The set of those rearrangements whose graph pair diagrams have domain and range graphs equal to the base graph is a subgroup of $G_n$.
It is isomorphic to the automorphism group of the base graph, which is a copy of $\mathrm{Sym}(n)$ that canonically permutes the set $\{C(1),\dots,C(n)\}$.
\end{proof}

Rearrangements can freely permute the branches at any branch point, not only at $p_0$, as the following Lemma shows.

\begin{lemma}
\label{lem:perm}
For all $p \in \mathrm{Br}$, the stabilizer of $p$ in $G_n$ acts on the set of branches at $p$ as $\mathrm{Sym}(n)$.
\end{lemma}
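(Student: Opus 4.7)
The plan is to deduce \cref{lem:perm} from \cref{lem:K} via a conjugation argument. It suffices to show that $G_n$ acts transitively on the set $\mathrm{Br}$ of branch points: given any $g \in G_n$ with $g(p_0) = p$, the conjugate subgroup $gKg^{-1}$ stabilizes $p$ (since $K$ stabilizes $p_0$), and because $g$ restricts to a bijection from the branches of $D_n$ at $p_0$ onto the branches at $p$, this conjugate acts on the branches at $p$ as the full symmetric group $\mathrm{Sym}(n)$.

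To construct such a $g$, we use the combinatorial parametrization from \cref{rmk:br:ex:pts}: every branch point has the form $p_x := \llbracket x i 1 \bar{n} \rrbracket$ for a unique finite word $x$, with $p_\epsilon = p_0$. We build a rearrangement $g_x$ with $g_x(p_0) = p_x$ via a graph pair diagram $(D, R, f)$ as follows. Take $R$ to be the expansion of $\Gamma$ obtained by successively expanding the cells $C(x_1), C(x_1 x_2), \dots, C(x)$, so that $p_x$ appears as an internal vertex of $R$. Choose $D$ to be an expansion of $\Gamma$ whose subtree of internal vertices rooted at $p_0^D$ is isomorphic, as a rooted tree, to the subtree of internal vertices of $R$ rooted at $p_x^R$. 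Any graph isomorphism $f \colon D \to R$ realizing this rooted-tree isomorphism on internal vertices (and bijecting the leaves at each matched pair of vertices compatibly) sends $p_0^D$ to $p_x^R$, giving the required rearrangement $g_x$.

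The main obstacle is producing such a $D$. This is handled by a straightforward recursion starting from $D_0 = \Gamma$, whose internal subtree is the singleton $\{p_0\}$: at each stage we expand one edge going from a current internal vertex of $D$ to a leaf, creating a new internal vertex that mirrors the corresponding vertex in $R$'s rooted subtree at $p_x^R$. The recursion always succeeds because every internal vertex of $\mathcal{D}_n$ has exactly $n$ incident edges in the full graph, so the root of the rooted subtree can accommodate up to $n$ internal neighbours and every non-root internal vertex up to $n - 1$ further internal children besides its parent; the same constraints are satisfied by every vertex of $R$'s internal subtree, so matching is always possible. The induced rearrangement $g_x$ then gives the desired transitivity, and \cref{lem:perm} follows from the conjugation argument above.
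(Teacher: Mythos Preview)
Your proof is correct but takes a genuinely different route from the paper's. The paper argues directly at $p$: it takes the minimal expansion $E$ in which $p$ appears, lets $\Lambda$ be the subgraph corresponding to the component of $E\setminus\{p\}$ containing $p_0$, and then expands the other $n-1$ components so that all $n$ pieces become isomorphic to $\Lambda$ as trees rooted at $p$; automorphisms of the resulting graph that fix $p$ then realise every permutation of these pieces. No transitivity on $\mathrm{Br}$ is needed. You instead first establish that $G_n$ is transitive on $\mathrm{Br}$ by an explicit graph pair diagram (matching rooted internal subtrees), and then conjugate $K$. This is not circular---you only invoke \cref{rmk:br:ex:pts} and \cref{lem:K}---but it front-loads a fact the paper proves only later as \cref{prop:gen:trans}, and by a different method (there, via the subgroups $H$ and $K$ rather than a single direct construction). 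The paper's argument is shorter and more self-contained for this lemma; yours has the merit of exhibiting the copy of $\mathrm{Sym}(n)$ at $p$ explicitly as a conjugate $gKg^{-1}$ and of giving an independent, $H$-free proof of transitivity on $\mathrm{Br}$.
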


\begin{proof}
Consider a branch point $p$ and let $E$ be the minimal graph expansion of $\mathcal{D}_n$ where $p$ appears.
Let $\Lambda$ be the subgraph of $E$ corresponding to the connected component of $E \setminus \{p\}$ that contains the central branch point $p_0$.
The subgraphs corresponding to the other $n-1$ connected components of $E \setminus \{p\}$ can be expanded in such a way that they become isomorphic to $\Lambda$.
We denote by $E^*$ the graph obtained by performing such expansions, where the subgraphs $\Lambda_i$ corresponding to the connected components of $E^* \setminus \{p\}$ are all isomorphic as trees rooted at $p$.
Now, clearly there are graph pair diagrams with domain and range graphs equal to $E^*$ that fix $p$ and realize any permutation of $\{\Lambda_1,\dots,\Lambda_n\}$.
\end{proof}

\subsubsection{The Thompson Subgroup \texorpdfstring{$H$}{H}}
\label{sub:thomp}

The second fundamental subgroup of $G_n$ is an isomorphic copy of Thompson's group $F$.
Let $A_0$ denote the EE-arc $[q_1, q_n]$ between the endpoints $q_1 = \llbracket 1 \bar{n} \rrbracket$ and $q_n = \llbracket \bar{n} \rrbracket$.
Note that, by \cref{rmk:paths}, $A_0$ corresponds to the set of those points of $D_n$ that are represented by some infinite sequence in the alphabet $\{ 1,n \}$.
The canonical homeomorphism from $A_0$ to $[0,1]$ described in \cref{rmk:arcs:dyadic} prompts the following Lemma.

\begin{lemma}
\label{lem:thomp}
For every $n \geq 3$, the dendrite rearrangement group $G_n$ contains an isomorphic copy $H$ of Thompson's group $F$ that acts on $A_0$ as $F$ does on the unit interval.
In particular, this subgroup $H$ is generated by the two elements portrayed in \cref{fig:H:generators}.
\end{lemma}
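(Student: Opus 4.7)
The plan is to pull back the action of Thompson's group $F$ on $[0,1]$ along the canonical homeomorphism $\Phi : A_0 \to [0,1]$ constructed in \cref{rmk:arcs:dyadic} from the path consisting of the two base-graph edges $1$ and $n$. Under $\Phi$, the branch points of $A_0$ correspond to the dyadic points of $(0,1)$, and the rational subdivisions of $A_0$ into sub-EE-arcs induced by graph expansions of $\mathcal{D}_n$ correspond exactly to the dyadic subdivisions of $[0,1]$, provided one only expands edges whose labels are words in $\{1, n\}$ --- I will call these \emph{spine edges}. Consequently, every tree pair diagram for an element $f \in F$ translates into a matched pair of spine subdivisions of $A_0$ together with a bijection between their pieces.

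Concretely, given a tree pair diagram for $f$, I will construct a graph pair diagram $(D, R, f_*)$ in $\mathcal{D}_n$ as follows. Starting from the base graph on each side, recursively expand spine edges to realize on the left and right the two spine subdivisions dictated by the trees. Each expansion of a spine edge $x$ produces two new spine edges $x1, xn$ together with $n-2$ non-spine edges $x2, \ldots, x(n-1)$, all attached to a new central vertex. The graph isomorphism $f_*$ then sends spine edges to spine edges according to the leaf correspondence of the tree pair diagram, and sends each non-spine edge $xj$ (for $2 \leq j \leq n-1$) to the non-spine edge $yj$ bearing the same label $j$ that is attached to the central vertex matched with the one carrying $xj$. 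This defines a rearrangement $\hat{f}$ whose restriction to $A_0$ is $\Phi^{-1} f \Phi$ by construction.

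I would then verify that $f \mapsto \hat{f}$ is a well-defined group homomorphism $F \to G_n$: expanding a leaf of the tree pair diagram produces a compatible expansion of the associated graph pair diagram (because spine and non-spine edges are partitioned canonically at each expansion), and composing two diagrams stacks correctly since both constructions follow the same recursion. Injectivity follows immediately from $\hat{f}|_{A_0} = \Phi^{-1} f \Phi$. Defining $H$ as the image then yields the desired copy of $F$, and the two elements displayed in \cref{fig:H:generators} are produced by applying the construction to the tree pair diagrams for $X_0, X_1$ in \cref{fig:thompson:F:gen}, so that $H = \langle \hat{X}_0, \hat{X}_1 \rangle$.

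The main obstacle is essentially bookkeeping: one must be careful that the matching of non-spine edges by label is consistent and that the spine correspondence determined by the tree pair diagram extends to a genuine graph isomorphism. Fortunately, the monochromatic nature of $\mathcal{D}_n$ --- combined with the fact that every expansion of a spine edge on either side produces the same local picture (one new central vertex, two new spine edges, and $n-2$ non-spine edges) --- makes this extension canonical once the spine correspondence is fixed. The only genuine subtlety is the orientation issue for the edge $1$ highlighted at the end of \cref{rmk:arcs:dyadic}, which affects how one reads off $\Phi$ at the $q_1$ end of $A_0$ but does not obstruct the construction of the homomorphism.
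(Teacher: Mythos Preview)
Your approach is correct and is essentially the paper's own, carried out in more detail. The paper's proof is extremely brief: it defines the two elements $g_0$ and $g_1$ of \cref{fig:H:generators}, sets $H = \langle g_0, g_1 \rangle$, and asserts that a direct comparison with the standard generators $X_0$, $X_1$ of $F$ (\cref{fig:thompson:F:gen}) immediately shows $H \cong F$; the paper also remarks immediately afterwards that the same copy of $F$ can be produced by invoking \cite[Proposition 2.8]{belk2016rearrangement}. Your explicit construction of the homomorphism $f \mapsto \hat f$ is one natural way to make that ``quick comparison'' rigorous, and the injectivity argument via the restriction to $A_0$ is exactly the right reason the map is an embedding.

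One caution about your stated matching rule. Sending the non-spine edge $xj$ to $yj$ ``with the same label $j$'' is only correct when the spine edges $x$ and $y$ carry the same orientation along $A_0$; in general the canonical (prefix-change) homeomorphism between two spine cells can reverse the direction of the sub-arc (for instance $C(1) \to C(n)$ sends $q_1$ to $q_n$ while fixing $p_0$, so it flips the half-arc). The paper's explicit formulas written out right after the proof --- where one sees $1i \mapsto \tilde{i}$ and $11i \mapsto n1\tilde{i}$ with $\tilde{i} = n+1-i$ --- are precisely the corrections needed to keep the restriction to $A_0$ order-preserving. You correctly flag the orientation issue as the only genuine subtlety; just note that it enters the edge-matching rule itself, not merely how one reads off $\Phi$, so your $\hat X_0$, $\hat X_1$ agree with the paper's $g_0$, $g_1$ only after this adjustment. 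In the undirected framework of \cref{rmk:undirected} this is handled automatically, so it does not affect the validity of your plan.
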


\begin{proof}
Consider the elements $g_0$ and $g_1$ depicted in \cref{fig:H:generators} (a precise definition in terms of sequences is given below) and let $H = \langle g_0, g_1 \rangle$.
A quick comparison with the standard generators $X_0$ and $X_1$ of Thompson's group $F$ (\cref{fig:thompson:F:gen}) immediately shows that the subgroup $H$ is isomorphic to Thompson's group $F$.
\end{proof}

\begin{figure}
\centering
\begin{tikzpicture}
    \node[vertex] (0) at (0,0) {};
    \node[vertex] (1/4) at (1,0) {};
        \node[vertex] (1/4 1) at ($(1/4)+(120:.5)$) {};
        \node[vertex] (1/4 2) at ($(1/4)+(60:.5)$) {};
    \node[vertex] (1/2) at (2,0) {};
        \node[vertex] (1/2 1) at ($(1/2)+(120:1)$) {};
        \node[vertex] (1/2 2) at ($(1/2)+(60:1)$) {};
    \node[vertex] (1) at (4,0) {};
    \draw (0) to (1/4);
    \draw (1/4) to (1/2);
        \draw[dotted] (1/4) to (1/4 1);
        \draw[dotted] (1/4) to (1/4 2);
    \draw (1/2) to (1);
        \draw[dotted] (1/2) to (1/2 1);
        \draw[dotted] (1/2) to (1/2 2);
    \draw[->] (4.5,0) to node[above]{$g_0$} (5,0);
    \begin{scope}[xshift=5.5cm]
    \node[vertex] (0) at (0,0) {};
    \node[vertex] (1/2) at (2,0) {};
        \node[vertex] (1/2 1) at ($(1/2)+(120:1)$) {};
        \node[vertex] (1/2 2) at ($(1/2)+(60:1)$) {};
    \node[vertex] (3/4) at (3,0) {};
        \node[vertex] (3/4 1) at ($(3/4)+(120:.5)$) {};
        \node[vertex] (3/4 2) at ($(3/4)+(60:.5)$) {};
    \node[vertex] (1) at (4,0) {};
    \draw (0) to (1/2);
    \draw (1/2) to (3/4);
        \draw[dotted] (1/2) to (1/2 1);
        \draw[dotted] (1/2) to (1/2 2);
    \draw (3/4) to (1);
        \draw[dotted] (3/4) to (3/4 1);
        \draw[dotted] (3/4) to (3/4 2);
    \end{scope}
\end{tikzpicture}
\\
\vspace{15pt}
\begin{tikzpicture}
    \node[vertex] (0) at (0,0) {};
    \node[vertex] (1/2) at (2,0) {};
        \node[vertex] (1/2 1) at ($(1/2)+(120:1)$) {};
        \node[vertex] (1/2 2) at ($(1/2)+(60:1)$) {};
    \node[vertex] (5/8) at (2.5,0) {};
        \node[vertex] (5/8 1) at ($(5/8)+(120:.25)$) {};
        \node[vertex] (5/8 2) at ($(5/8)+(60:.25)$) {};
    \node[vertex] (3/4) at (3,0) {};
        \node[vertex] (3/4 1) at ($(3/4)+(120:.5)$) {};
        \node[vertex] (3/4 2) at ($(3/4)+(60:.5)$) {};
    \node[vertex] (1) at (4,0) {};
    \draw (0) to (1/2);
    \draw (1/2) to (5/8);
        \draw[dotted] (1/2) to (1/2 1);
        \draw[dotted] (1/2) to (1/2 2);
    \draw (5/8) to (3/4);
        \draw[dotted] (5/8) to (5/8 1);
        \draw[dotted] (5/8) to (5/8 2);
    \draw (3/4) to (1);
        \draw[dotted] (3/4) to (3/4 1);
        \draw[dotted] (3/4) to (3/4 2);
    \draw[->] (4.5,0) to node[above]{$g_1$} (5,0);
    \begin{scope}[xshift=5.5cm]
    \node[vertex] (0) at (0,0) {};
    \node[vertex] (1/2) at (2,0) {};
        \node[vertex] (1/2 1) at ($(1/2)+(120:1)$) {};
        \node[vertex] (1/2 2) at ($(1/2)+(60:1)$) {};
    \node[vertex] (3/4) at (3,0) {};
        \node[vertex] (3/4 1) at ($(3/4)+(120:.5)$) {};
        \node[vertex] (3/4 2) at ($(3/4)+(60:.5)$) {};
    \node[vertex] (7/8) at (3.5,0) {};
        \node[vertex] (7/8 1) at ($(7/8)+(120:.25)$) {};
        \node[vertex] (7/8 2) at ($(7/8)+(60:.25)$) {};
    \node[vertex] (1) at (4,0) {};
    \draw (0) to (1/2);
    \draw (1/2) to (3/4);
        \draw[dotted] (1/2) to (1/2 1);
        \draw[dotted] (1/2) to (1/2 2);
    \draw (3/4) to (7/8);
        \draw[dotted] (3/4) to (3/4 1);
        \draw[dotted] (3/4) to (3/4 2);
    \draw (7/8) to (1);
        \draw[dotted] (7/8) to (7/8 1);
        \draw[dotted] (7/8) to (7/8 2);
    \end{scope}
\end{tikzpicture}
\caption{The two generators $g_0$ and $g_1$ of the copy $H$ of Thompson's group $F$ (drawn with undirected edges, see \cref{rmk:undirected}).}
\label{fig:H:generators}
\end{figure}
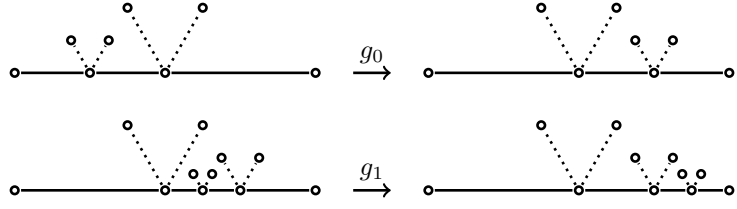

It may be noted that this copy $H$ of Thompson's group $F$ can also be produced by an application of \cite[Proposition 2.8]{belk2016rearrangement}.
In practice, up to ignoring a 180-degree flip (and, in general, the elements that switch the endpoints $\llbracket 1\bar{n} \rrbracket$ and $\llbracket \bar{n} \rrbracket$), the subgroup $H$ can be thought of as the rearrangement group obtained by forgetting about the edges $2, 3, \dots, n-1$ in both the base and the replacement graphs.

More detailed definitions of the generators $g_0$ and $g_1$ are the following (here we do not use undirected edges for the sake of completeness, so $g_0(C(11)) = C(n1)$ and $g_1(C(n1n)) = C(nn1)$ are expanded and follow the rules described in \cref{rmk:undirected}):
\begin{center}
\begin{tabular}{ l r c l }
    $g_0:$ & $1 n$ & $\mapsto$ & $1$ \\
           & $1 i$ & $\mapsto$ & $\tilde{i}$ \\
           & $1 1 1$ & $\mapsto$ & $n 1 n = n 1 \tilde{1}$ \\
           & $1 1 i$ & $\mapsto$ & $n 1 \tilde{i}$ \\
           & $1 1 n$ & $\mapsto$ & $n 1 1 = n 1 \tilde{n}$ \\
           & $i$  & $\mapsto$ & $ni$ \\
           & $n$ & $\mapsto$ & $n n$ \\
           & \\
           &
\end{tabular}
\hspace{20pt}
\begin{tabular}{ l r c l }
    $g_1:$ & $1$ & $\mapsto$ & $1$ \\
           & $i$ & $\mapsto$ & $i$ \\
           & $n 1 n$ & $\mapsto$ & $n 1 = n \tilde{n}$ \\
           & $n 1 i$ & $\mapsto$ & $n \tilde{i}$ \\
           & $n 1 1 1$ & $\mapsto$ & $n n 1 n = n n 1 \tilde{1}$ \\
           & $n 1 1 i$ & $\mapsto$ & $n n 1 \tilde{i}$ \\
           & $n 1 1 n$ & $\mapsto$ & $n n 1 1 = n n 1 \tilde{n}$ \\
           & $n i$  & $\mapsto$ & $n n i$ \\
           & $n n$ & $\mapsto$ & $n n n$
\end{tabular}
\end{center}
where $\tilde{j}$ denotes $n+1-j$ and the $i$'s always represent all possible choices among $2, \dots, n-1$.

In \cref{sec:gen}, when more about the transitivity properties of dendrite rearrangement groups will have been discussed, \cref{lem:thomps} will show that there are many more copies of Thompson's group $F$ nested inside $G_n$.

Finally, we point out that actions of Thompson's group $F$ on dendrites have been considered in literature.
For example, in \cite[Corollary 1.5]{DM18} it is proved that such actions have an orbit that consists of one or two points, and in \cite[Corollary 4.5]{FiniteOrbits} it is shown that such actions are equicontinuous on their minimal sets.
The dissertation \cite{Dendrite} is also about a Thompson-like group acting on a dendrite (namely, the Julia set for the complex map $z \to z^2 +i$, which is homeomorphic to the Wa\.zewski dendrite $D_3$):
differently from our dendrite rearrangement groups, the group studied in \cite{Dendrite} is much more $T$-like, as it is defined as a group of those homeomorphic of the unit circle that preserve a lamination induced by the Julia set.
It seems likely that this group belongs to the family of orientation-preserving dendrite rearrangement groups defined in \cref{sub:orientation}.


\section{Generation of Dendrite Rearrangement Groups}
\label{sec:gen}

In this Section we prove that Dendrite Rearrangement Groups are generated by the subgroups $H$ and $K$ from \cref{lem:thomp} and \cref{lem:K}, respectively.

\subsection{Some Transitivity Properties}

For any $k \in \mathbb{N}$, let $\boldsymbol{\mathrm{Br}^{(k)}(X)}$ be the set of all subsets of $\mathrm{Br}(X)$ consisting of exactly $k$ elements.
From \cref{lem:thomp}, it is clear that the action of $H$ on the branch points of $A_0^\mathrm{o}$ (the interior of the arc $A_0$) corresponds to the action of $F$ on the dyadic points of $(0,1)$.
In particular, since it is known that for every $k \in \mathbb{N}$ Thompson's group $F$ acts transitively on the set of $k$-tuples of dyadic points of $(0,1)$, we have the following.

\begin{lemma}
\label{lem:thomp:trans}
For every $k \in \mathbb{N}$, the subgroup $H$ acts transitively on $\mathrm{Br}^{(k)}(A_0^\mathrm{o})$.
\end{lemma}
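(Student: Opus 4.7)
The plan is to transfer the known $k$-transitivity of Thompson's group $F$ on the dyadic points of $(0,1)$ to the action of $H$ on branch points in the interior of $A_0$, using the canonical identification set up in the earlier remarks.

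First I would invoke \cref{rmk:arcs:dyadic}: because $A_0 = [q_1, q_n]$ is an EE-arc, there is a canonical homeomorphism $\Phi : A_0 \to [0,1]$ sending $q_1 \mapsto 0$, $q_n \mapsto 1$, and inducing a bijection between $\mathrm{Br}(A_0^\mathrm{o})$ and the set of dyadic rationals in $(0,1)$. Combining this with \cref{lem:thomp}, the conjugate $\Phi H \Phi^{-1}$ acts on $[0,1]$ as Thompson's group $F$ in its standard piecewise-linear representation, and the bijection on branch points intertwines the $H$-action on $\mathrm{Br}(A_0^\mathrm{o})$ with the $F$-action on dyadic points of $(0,1)$. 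Hence it suffices to show that $F$ acts transitively on the set of $k$-element subsets of dyadic points of $(0,1)$.

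Next I would recall the standard fact that for every $k$, $F$ acts transitively on $k$-tuples of dyadic points of $(0,1)$ in their natural order: given two increasing $k$-tuples $(x_1, \dots, x_k)$ and $(y_1, \dots, y_k)$ of dyadic points, one can choose dyadic partitions $0 = a_0 < x_1 < a_1 < x_2 < \dots < x_k < a_k = 1$ and $0 = b_0 < y_1 < b_1 < \dots < y_k < b_k = 1$ with matching numbers of pieces, and then the piecewise-linear map with dyadic breakpoints sending each interval $[a_{i-1}, a_i]$ affinely (in fact, after a further common refinement, by an $F$-element) onto $[b_{i-1}, b_i]$ defines an element of $F$ carrying $x_i$ to $y_i$. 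Since any $k$-element subset of dyadic points of $(0,1)$ admits a unique increasing enumeration, this yields transitivity on $k$-subsets as well.

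Putting these two ingredients together gives the claim: for any $S, S' \in \mathrm{Br}^{(k)}(A_0^\mathrm{o})$, the images $\Phi(S)$ and $\Phi(S')$ are $k$-subsets of dyadic points of $(0,1)$, so there is $f \in F$ with $f(\Phi(S)) = \Phi(S')$, and the element $\Phi^{-1} f \Phi \in H$ sends $S$ to $S'$. The main (mild) obstacle is just being careful that $\Phi$ truly intertwines the two actions and matches the correct notion of dyadic points with branch points, but this is exactly the content of \cref{rmk:arcs:dyadic} and \cref{lem:thomp}, so no genuine difficulty arises.
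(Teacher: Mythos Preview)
Your proposal is correct and follows exactly the paper's approach: the paper does not give a formal proof of this lemma at all, but simply observes in the sentence preceding it that the action of $H$ on $\mathrm{Br}(A_0^\mathrm{o})$ corresponds, via \cref{lem:thomp}, to the action of $F$ on the dyadic points of $(0,1)$, and then invokes the well-known $k$-transitivity of $F$ on such points. Your write-up is just a more careful and explicit version of that same one-sentence argument.
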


Using this Lemma, we can prove the following useful Proposition.

\begin{proposition}
\label{prop:gen:trans}
The subgroup $\langle H, K \rangle$ of $G_n$ is transitive on $\mathrm{Br}$.
\end{proposition}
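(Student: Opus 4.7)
The plan is to show that the orbit of $p_0$ under $\langle H, K \rangle$ is all of $\mathrm{Br}$, by induction on the depth $k = |x|$ of $B(x)$, where $x$ is the word uniquely associated to $B(x)$ by \cref{rmk:br:ex:pts}. The base case $k = 0$ holds trivially since $B(\epsilon) = p_0$, and the case $k = 1$ is handled by possibly applying a transposition in $K$ to bring the sole letter of $x$ into $\{1, n\}$ and then invoking \cref{lem:thomp:trans}, which applies because $B(1)$ and $B(n)$ lie in $\mathrm{Br}(A_0^\mathrm{o})$.

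For the inductive step ($k \geq 2$), I first observe that if $x \in \{1, n\}^*$ then $B(x) \in \mathrm{Br}(A_0^\mathrm{o})$ (since one of its representatives lies in $\{1, n\}^\infty$), so $H \subseteq \langle H, K \rangle$ alone handles it via \cref{lem:thomp:trans}. Otherwise, let $\ell = \min\{i : x_i \notin \{1, n\}\}$; after applying a suitable $\sigma \in K$ in case $\ell = 1$, I may assume $\ell \geq 2$ and decompose $x = y \cdot i \cdot z$ with $y = x_1 \ldots x_{\ell-1} \in \{1, n\}^*$ non-empty, $i = x_\ell \in \{2, \ldots, n-1\}$, and $z = x_{\ell+1} \ldots x_k$. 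By \cref{lem:thomp:trans} I pick $h \in H$ with $h(B(y)) = p_0$, and claim that $h$ restricts on the cell $C(y i)$ to the canonical homeomorphism onto some $C(i')$ with $i' \in \{2, \ldots, n-1\}$. Granting this, one computes
\[ h(B(x)) = h(\llbracket y\, i\, z\, 1\, 1\, \bar{n} \rrbracket) = \llbracket i'\, z\, 1\, 1\, \bar{n} \rrbracket = B(i' z), \]
which has depth $1 + |z| = k - \ell + 1 \leq k - 1$, so the inductive hypothesis completes the reduction.

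The main obstacle is the canonicity claim just invoked for $h$ on the off-$A_0$ cell $C(y i)$. The key observation is that the defining rules of the generators $g_0^{\pm 1}, g_1^{\pm 1}$ of $H$ (laid out right after \cref{lem:thomp}) only subdivide, at any $A_0$-branch-point, the two on-$A_0$ cells (those in directions $1$ and $n$), while mapping each of the $n - 2$ off-$A_0$ cells of the form $C(y i)$ with $i \in \{2, \ldots, n-1\}$ canonically to another off-$A_0$ cell at the image $A_0$-branch-point. This property is preserved under composition and inversion, so every $h \in H$ inherits it, which is precisely what the inductive step requires.
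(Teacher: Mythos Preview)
Your proof is correct and follows essentially the same inductive strategy as the paper: both induct on the depth of the branch point (equivalently, the path length from $p_0$ in the minimal graph expansion), use an element of $K$ to bring the first off-$A_0$ step onto $A_0$, and then apply an element of $H$ to shorten. Your explicit canonicity argument for the action of $H$ on the off-$A_0$ cells $C(yi)$ is precisely what justifies the depth reduction; the paper's proof leaves this step implicit, appealing only to ``well-known transitivity properties of Thompson's group $F$''.
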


\begin{proof}
Let $p$ be a branch point and consider the path $P$ from $p_0$ to $p$ in the minimal graph expansion where $p$ appears.
We want to prove that $p$ can be mapped to $p_0$ by an element of $\langle H, K \rangle$.
We will do this by decreasing the length $k$ of $P$ in order to conclude by induction.

If $k=0$ then $p_0 = p$ and there is nothing to prove, so we can immediately suppose that $k \geq 1$.
Let $p_1$ be the vertex of $P$ that is adjacent to $p_0$.
We can suppose that $p_1$ belongs to the branch $C(1)$, as otherwise applying an element of $K$ will fix that.
Then both $p_0$ and $p_1$ are branch points of the arc $A_0$, so \cref{lem:thomp:trans} allows us to find an element of $H$ that maps $p_1$ to $p_0$ and that reduces the length of the path as needed thanks to well-known transitivity properties of Thompson's group $F$ (for a recent example of a similar application of such properties, one can see Case 2 of the proof of \cite[Theorem 6.2]{Airplane}).
\end{proof}

\subsection{A Finite Generating Set for \texorpdfstring{$G_n$}{Gn}}
\label{sub:gen}

The overall strategy of the proof of the following result is similar to that of \cite[Theorem 6.2]{Airplane}, which is about the generating set of the airplane rearrangement group $T_A$.
The similarity is no coincidence:
the close relation between the airplane rearrangement group and dendrite rearrangement groups will be discussed in \cref{sub:Air}.

\begin{theorem}
\label{thm:gen}
$G_n = \langle H, K \rangle$.
In particular, $G_n$ is finitely generated.
\end{theorem}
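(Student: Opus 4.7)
The plan is to argue by induction on a natural complexity measure---say, the total number of edges $|D|+|R|$ of a reduced graph pair diagram $(D,R,f)$ representing an arbitrary element $g \in G_n$. In the base case $D = R = \Gamma$, the map $f$ is a graph automorphism of $\Gamma$, so $g \in K$ by \cref{lem:K}. Finite generation will then follow at once, since $K \cong \mathrm{Sym}(n)$ is $2$-generated (for instance by a transposition and an $n$-cycle) and $H \cong F$ is generated by the two elements of \cref{fig:H:generators}.

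For the inductive step, the first reduction is to put $g$ into a normal form around the central branch point $p_0$. Using \cref{prop:gen:trans}, I would choose $h_1 \in \langle H, K \rangle$ with $h_1 g(p_0) = p_0$; then $h_1 g$ permutes the set $\{C(1), \ldots, C(n)\}$ of branches at $p_0$, and this permutation can be cancelled by a suitable $h_2 \in K$. The element $g' := h_2 h_1 g$ then fixes $p_0$ and stabilises each cell $C(i)$ setwise, so it restricts to a rearrangement $g'_i$ on each cell $C(i)$.

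The second reduction is to trivialise $g'_i$ cell-by-cell. Each $C(i)$ contains a ``local central branch point'' $p_i = \llbracket i 1 \bar{n}\rrbracket$ (\cref{rmk:br:ex:pts}) around which $C(i)$ has exactly the same combinatorial structure as $D_n$ has around $p_0$. By conjugating $K$ and $H$ by elements of $\langle H, K \rangle$ that send $p_i$ to $p_0$ (supplied again by \cref{prop:gen:trans}), I would obtain local copies of $\mathrm{Sym}(n)$ based at $p_i$ and of Thompson's group $F$ acting on an appropriate arc inside $C(i)$, all lying inside $\langle H, K \rangle$. These local copies permit a repeat of the first reduction \emph{inside} $C(i)$, strictly shrinking the combinatorial complexity of $g'_i$, and hence of the diagram of $g'$. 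Iterating this procedure eventually brings $g'$ to the identity by left-multiplication by elements of $\langle H, K \rangle$, which proves $g \in \langle H, K \rangle$.

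The main obstacle I anticipate is ensuring that the elements used to simplify $g'_i$ on a cell $C(i)$ can be chosen so as to not disturb (or only controllably disturb) the other cells $C(j)$ for $j \neq i$; equivalently, that the local copies of $H$ and $K$ inside each $C(i)$ can be realised by elements of $\langle H, K \rangle$ whose action on $D_n \setminus C(i)$ is either trivial or can be cancelled away. I expect this to need the most care, likely by refining the transitivity argument of \cref{prop:gen:trans} to produce group elements of controlled support, or by setting up the outer induction directly on the multiset of edges of $D$ ``under'' each branch $C(i)$ so that each local reduction strictly decreases one coordinate while leaving the others untouched. Once this bookkeeping is in place the induction closes, giving $G_n = \langle H, K \rangle$ and finite generation.
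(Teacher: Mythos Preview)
Your plan is close to the paper's, and the first reduction you describe (fix $p_0$, then cancel the induced permutation of branches by an element of $K$, then decompose $g'$ as a product of its restrictions $g'_i$ to the cells $C(i)$) is exactly what the paper does in its ``scattered'' case, where at least two branches at $p_0$ contain branch points of the domain graph. In that case each $g'_i$ has strictly smaller complexity and induction applies directly to it as an element of $G_n$ --- no local copies of $H$ or $K$ are needed at all.

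The obstacle you flag is genuine, and it arises precisely in the complementary ``concentrated'' case: when all the branch points of $D$ other than $p_0$ lie in a single branch, say $C(n)$, the restriction $g'_n$ has the same complexity as $g'$, and your proposed fix (conjugate $H$ and $K$ to act locally inside $C(n)$) runs into exactly the support problem you anticipate. The paper does \emph{not} resolve this by producing locally supported copies of $H$ and $K$. Instead it uses the global $H$ acting on the arc $A_0$: one finds $h_D, h_R \in H$ that send the chain of branch points lying along $A_0 \cap C(n)$ to a standard evenly spaced configuration (including moving $p_0$ itself to the leftmost position $\llbracket 1 i \bar{n}\rrbracket$), and checks that in the graph pair diagram of $h_R g h_D^{-1}$ the subgraph around this leftmost branch point becomes reducible, dropping the branch-point count by one. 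This sidesteps the support issue entirely, at the cost of a slightly more delicate bookkeeping argument with $H$.

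So: your induction on $|D|+|R|$ (equivalently, on the number of branch points, since these are affinely related for expansions of $\mathcal{D}_n$) is fine, and your scattered case is correct as stated. For the concentrated case, replace the ``local copies'' idea with the redistribution trick above.
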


\begin{proof}
Let $g \in G_n$.
We need to show that $g \in \langle H, K \rangle$.
Thanks to \cref{prop:gen:trans}, up to composing $g$ with a suitable element of $\langle H, K \rangle$ we can suppose that $g$ fixes the central branch point $p_0$.
We consider the reduced graph pair diagram $(D, R, f)$ for $g$, and we proceed by induction on the number $k$ of branch points that appear as vertices in $D$.
The base case $k=1$ simply means that $g \in K$, so we can immediately suppose that $k \geq 2$.

The simplest case is when the $k$ branch points of $D$ are scattered among at least two distinct branches at $p_0$.
In this case, there is an element $h$ of $K$ such that $h g$ fixes setwise each of the $n$ branches at $p_0$.
The element $h g$ can then be decomposed as the product of its restrictions in each branch at $p_0$, and since there are at least two of these branches containing one of the $k$ branch points, each of these restrictions must contain less than $k$ branch points.
This allows conclude by applying our induction hypothesis on each of the restrictions.

Suppose instead that the $k$ branch points of $D$ all belong to the same branch at $p_0$.
We can assume that $g$ fixes setwise each of the $n$ branches at $p_0$, up to composing with a suitable element of $K$.
In this case, we will apply a similar strategy to that described at the end of the proof of \cref{prop:gen:trans} (also similar to Case 2 of the proof of \cite[Theorem 6.2]{Airplane}):
we will find $h_D,h_R \in H$ such that the reduced graph pair diagram of $h_Rgh_D^{-1}$ features less branch points than $(D,R,f)$.
Let $(v_0,v_1,\dots,v_m)$ be vertices of the longest path in $D$ that corresponds to an arc included in $A_0^\mathrm{o}$ (equivalently, this is the longest path in $D$ whose edges correspond to words $nw$ for some $w$ in the alphabet $\{1,n\}$, excluding paths that contain the vertex $\llbracket \bar{n} \rrbracket$).
Then $v_0=p_0$ and $2 \leq m \leq k$.
Consider the element $h_D$ of $H$ that maps $p_0$ to $\llbracket 1 i \bar{n} \rrbracket$ (for any $i$) and every other $v_j$ to $\llbracket n^j i \bar{n} \rrbracket$ (for any $i$), mapping rigidly between the edges between such vertices.
See \cref{fig:thm:gen} for a schematic depiction of such an element, where the $v_i$'s lie on $A_0$ in a generic position.
Let us construct $h_R$ in the same way for the range graph $R$.
Now consider the element $h_Rgh_D^{-1} \in \langle H,K \rangle$:
by the construction of $h_D$ and $h_R$, its domain and range graphs have $k$ vertices that correspond to branch points, of which $k-1$ are included in $A_0$ and the other is $\llbracket 1 i \bar{n} \rrbracket$.
Both graphs feature the subgraph with edges $11, 12, \dots, 1n$, which we denote by $\Lambda$.
Moreover, $h_Rgh_D^{-1}$ maps the cell $C(1i)$ rigidly to itself for any $i=1,\dots,n-1$.
Thus, the subgraph $\Lambda$ can be reduced in the graph pair diagram of $h_Rgh_D^{-1}$, which decreases by one the number of branch points that appear.
We can thus apply our induction hypothesis on $h_Rgh_D^{-1}$ and conclude that $g \in \langle H, K \rangle$.
\end{proof}

\begin{figure}
\centering
\begin{tikzpicture}[scale=1.333]
    \node[vertex] (0) at (0,0) {};
    \node[vertex] (v0) at (2,0) {};
        \node[below] at (v0) {$v_0$};
        \node[vertex] (v0l) at ($(v0)+(120:1)$) {};
        \node[vertex] (v0r) at ($(v0)+(60:1)$) {};
    \node[vertex] (v1) at (2.39,0) {};
        \node[below] at (v1) {$v_1$};
        \node[vertex] (v1l) at ($(v1)+(120:.225)$) {};
        \node[vertex] (v1r) at ($(v1)+(60:.225)$) {};
    \node[vertex] (v2) at (3,0) {};
        \node[below] at (v2) {$v_2$};
        \node[vertex] (v2l) at ($(v2)+(120:.5)$) {};
        \node[vertex] (v2r) at ($(v2)+(60:.5)$) {};
    \node[vertex] (v3) at (3.65,0) {};
        \node[below] at (v3) {$v_3$};
        \node[vertex] (v3l) at ($(v3)+(120:.275)$) {};
        \node[vertex] (v3r) at ($(v3)+(60:.275)$) {};
    \node[vertex] (1) at (4,0) {};
    \draw (0) to (v0);
        \draw[dotted] (v0) to (v0l);
        \draw[dotted] (v0) to (v0r);
    \draw (v0) to (v1);
        \draw[dotted] (v1) to (v1l);
        \draw[dotted] (v1) to (v1r);
    \draw (v1) to (v2);
        \draw[dotted] (v2) to (v2l);
        \draw[dotted] (v2) to (v2r);
    \draw (v2) to (v3);
        \draw[dotted] (v3) to (v3l);
        \draw[dotted] (v3) to (v3r);
    \draw (v3) to (1);
    \draw[->] (4.375,0) to node[above]{$h_D$} (4.75,0);
    \begin{scope}[xshift=5.125cm]
    \node[vertex] (0) at (0,0) {};
    \node[vertex] (w0) at (1,0) {};
        \node[below] at (w0) {$w_0$};
        \node[vertex] (w0l) at ($(w0)+(120:.5)$) {};
        \node[vertex] (w0r) at ($(w0)+(60:.5)$) {};
    \node[vertex] (w1) at (2,0) {};
        \node[below] at (w1) {$w_1$};
        \node[vertex] (w1l) at ($(w1)+(120:1)$) {};
        \node[vertex] (w1r) at ($(w1)+(60:1)$) {};
    \node[vertex] (w2) at (3,0) {};
        \node[below] at (w2) {$w_2$};
        \node[vertex] (w2l) at ($(w2)+(120:.5)$) {};
        \node[vertex] (w2r) at ($(w2)+(60:.5)$) {};
    \node[vertex] (w3) at (3.619,0) {};
        \node[below] at (w3) {$w_3$};
        \node[vertex] (w3l) at ($(w3)+(120:.25)$) {};
        \node[vertex] (w3r) at ($(w3)+(60:.25)$) {};
    \node[vertex] (1) at (4,0) {};
    \draw (0) to (w0);
        \draw[dotted] (w0) to (w0l);
        \draw[dotted] (w0) to (w0r);
    \draw (w0) to (w1);
        \draw[dotted] (w1) to (w1l);
        \draw[dotted] (w1) to (w1r);
    \draw (w1) to (w2);
        \draw[dotted] (w2) to (w2l);
        \draw[dotted] (w2) to (w2r);
    \draw (w2) to (w3);
        \draw[dotted] (w3) to (w3l);
        \draw[dotted] (w3) to (w3r);
    \draw (w3) to (1);
    \end{scope}
\end{tikzpicture}
\caption{A schematic depiction of the element $h_D$ for the proof of \cref{thm:gen}, with $m=3$. Here, $w_i$ denotes the image of $v_i$.}
\label{fig:thm:gen}
\end{figure}

Now that we know that each $G_n$ is finitely generated, it is natural to ask the following question, which we do not investigate here.

\begin{question}
What further finiteness properties do dendrite rearrangement groups have?
Are they finitely presented?
Are they $F_\infty$?
\end{question}

\subsection{Further Transitivity Properties}

Let $A$ be any rational arc (\cref{def:arc:type}) and consider the set $\mathrm{Br}(A^\mathrm{o})$ of branch points that are contained in the interior of $A$.
The next Lemma shows that $G_n$ contains a copy of Thompson's group $F$ for every arc $A = [p,q]$, where $p, q \in \mathrm{Br} \cup \mathrm{REn}$.

\begin{lemma}
\label{lem:thomps}
Given a dendrite replacement system $\mathcal{D}_n$, suppose that $A$ is a rational arc.
Then $G_n$ contain isomorphic copies $H_A$ of Thompson's group $F$ that acts on $A$ as Thompson's group $F$ does on $[0,1]$ and on $\mathrm{Br}(A^\mathrm{o})$ as $F$ does on the set of dyadic points of $(0,1)$.
In particular, $H_A$ acts transitively on $\mathrm{Br}^{(k)}(A^\mathrm{o})$.
\end{lemma}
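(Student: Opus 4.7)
The plan is to set $H_A := \phi_A H \phi_A^{-1}$, where $H$ is the copy of $F$ from \cref{lem:thomp} acting on $A_0$ and $\phi_A \in G_n$ is a carefully constructed rearrangement sending $A_0$ (or, for non-EE arcs, a suitable sub-arc of $A_0$) onto $A$ in a manner compatible with the canonical homeomorphisms from \cref{rmk:arcs:dyadic}.

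I would begin with the case where $A$ is an EE-arc. Let $P = (e_1, \dots, e_l)$ be the minimal path corresponding to $A$ in some expansion of $\mathcal{D}_n$, and let $l_i := |e_i|$. By \cref{rmk:arcs:dyadic}, $\sum_{i=1}^l 2^{-l_i} = 1$; moreover, the ordered tuple $(l_1, \dots, l_l)$ is the left-to-right leaf-depth tuple of a finite rooted ordered binary tree, since each expansion of an edge of length $l$ along the $A_0$-path of an expansion of $\mathcal{D}_n$ replaces that edge by two edges of length $l+1$. Starting from the base graph, whose $A_0$-path $(1, n)$ has length-tuple $(1, 1)$, we can thus expand along the $A_0$-path to obtain an expansion $D$ whose $A_0$-path $(e_1^*, \dots, e_l^*)$ has length tuple $(l_1, \dots, l_l)$.

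We then equalize the $n - 2$ subtrees hanging at each internal vertex of the $A_0$-path of $D$ with those at the corresponding vertex of $P$ by further expansions of both graphs off their respective paths. This is always possible thanks to the monochromaticity of $\mathcal{D}_n$ and the uniform star shape of the replacement graph: any two subtree expansions can be equalized by further expansion (for instance, by fully expanding both to a common depth). This produces a graph pair diagram yielding a rearrangement $\phi_A \in G_n$ with $\phi_A(A_0) = A$, whose restriction to $A_0$ coincides with the composition of the canonical homeomorphisms $A_0 \to [0,1] \to A$.

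Setting $H_A := \phi_A H \phi_A^{-1}$ gives a copy of $F$ in $G_n$ whose action on $A$ is conjugate, via the canonical homeomorphism, to the action of $F$ on $[0, 1]$. In particular, by \cref{rmk:arcs:dyadic} the canonical homeomorphism identifies $\mathrm{Br}(A^\mathrm{o})$ with the dyadic points of $(0, 1)$, so $H_A$ acts on $\mathrm{Br}(A^\mathrm{o})$ as $F$ does on dyadics, and \cref{lem:thomp:trans} yields transitivity on $\mathrm{Br}^{(k)}(A^\mathrm{o})$. The BE- and BB-cases are analogous, replacing $A_0$ by a sub-arc of $A_0$ of the corresponding type and $H$ by the subgroup of $H$ supported on that sub-arc (itself a copy of $F$). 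The main technical obstacle is the construction of $\phi_A$---specifically, matching the off-path subtrees in $D$ with those in the expansion containing $P$---which relies essentially on the uniformity of $\mathcal{D}_n$'s replacement graph.
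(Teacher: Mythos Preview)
Your overall strategy—conjugate $H$ by a rearrangement $\phi_A \in G_n$ sending $A_0$ onto $A$, built directly as a single graph pair diagram—is sound and is more direct than the paper's proof, which instead produces such an element by induction on the length of the minimal path, using \cref{prop:gen:trans} and explicit involutions to shorten the path one edge at a time. There is, however, a genuine error in your construction of $\phi_A$.

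Your claim that the minimal-path length tuple $(l_1, \dots, l_l)$ of an EE-arc satisfies $\sum_i 2^{-l_i} = 1$, and is therefore the leaf-depth tuple of a binary tree, is false. For $n \ge 4$, the EE-arc between $\llbracket 12\bar{n}\rrbracket$ and $\llbracket 13\bar{n}\rrbracket$ has minimal path $(12, 13)$, whose length tuple $(2,2)$ gives $\sum_i 2^{-l_i} = \tfrac12$; and $(2,2)$ cannot be the $A_0$-path length tuple of any expansion, since those tuples always sum to $1$. (The last sentence of \cref{rmk:arcs:dyadic} is loose on exactly this point.) Hence the expansion $D$ you describe does not exist in general.

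The repair is easy and preserves your approach. For a graph pair diagram you do not need the edge \emph{lengths} to match, only the abstract graphs. Take any expansion $D$ whose $A_0$-path has $l$ edges, take an expansion $E$ containing $P$, and then—as you already suggest—expand the $n-2$ off-path subtrees at each internal path vertex on both sides until they agree (e.g.\ make them full trees of a common depth). The resulting isomorphism takes the $A_0$-path to $P$ and defines $\phi_A$. With this $\phi_A$ the restriction $\phi_A|_{A_0}$ will \emph{not} in general be the composition of the canonical maps of \cref{rmk:arcs:dyadic}, but that claim was never needed: $H_A = \phi_A H \phi_A^{-1}$ still acts on $A$ as $F$ on $[0,1]$ via the identification $A \xrightarrow{\phi_A^{-1}} A_0 \to [0,1]$, which carries $\mathrm{Br}(A^\mathrm{o})$ to the dyadics.
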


\begin{proof}
It suffices to prove the statement for an EE-arc $A$.
Indeed, each BE-arc or BB-arc is contained in an EE-arc and corresponds to a dyadic interval under the identification of $A$ with $[0,1]$ described in \cref{rmk:arcs:dyadic}, and it is known that $F$ contains a copy of itself on every dyadic interval that is contained in $[0,1]$.

Let $A = [p,q]$ be an EE-arc.
As explained in \cref{rmk:paths}, consider the minimal expansion $E$ of $\mathcal{D}_n$ that contains the rational endpoints $p$ and $q$ as vertices.
In this graph, the arc $A$ corresponds to a path $P$ in the sense that, if $e_1, \dots, e_k$ are the edges of $P$,
then
\[ A = \big\{ \llbracket e_i \omega \rrbracket \in D_n \mid i = 1, \dots, n \text{ and } \omega \text{ is a sequence in } \{ 1, n \} \big\}. \]
By induction on the length $k$ of the path $P$ in the minimal graph where both $p$ and $q$ appear, we will find an element $g \in G_n$ such that $g(P)$ corresponds to $A_0$ in the sense described above, so that $g(A) = A_0$.
The goal will essentially be to find an element that modifies $P$ so that a reduction occurs, in order to decrease $k$.

To make this easier, we will suppose that $P$ contains the central branch point $p_0$.
We can do this without loss of generality up to conjugating by an element of $G_n$ that maps some branch point of $P$ to $p_0$, which exists thanks to \cref{prop:gen:trans}.

The base case is $k=2$, meaning that $P$ consists of two edges of the base graph.
In this case, the two edges correspond simply to distinct 1-letter words $i$ and $j$ chosen among $1, \dots, n$.
Then $g$ is the unique element of $K$ that switches $i$ with $1$ and $j$ with $n$, leaving everything else unchanged.

For $k \geq 3$, either $p$ or $q$ is not among the endpoints $\llbracket i \bar{n} \rrbracket$, and without loss of generality we will assume that it is $p$.
Denote by $r$ the vertex of $P$ that is adjacent to $p$.
Observe that $r$ must be a branch point, so $r = \llbracket x i 1 \bar{n} \rrbracket$ for some $x$ that does not end with $1$ nor with $n$ (\cref{rmk:br:ex:pts}).
Then it is not possible that $p$ is $\llbracket x \bar{n} \rrbracket$, otherwise $E$ would not be the minimal graph expansion where $p$ and $q$ appear, because the cell $C(x)$ could be reduced (\cref{fig:lem:thomps} shows why this is the case with an example).
We now find an element $g \in G_n$ that causes precisely this reduction, i.e., $g$ is such that $g(P)$ has one edge less than $P$ in the minimal graph expansion where both $g(p)$ and $g(q) = q$ appear (note that $g(q) = q$ because, as assumed at the beginning of the proof, $P$ includes the central branch point $p_0$, so $q$ is not included in any of the two branches permuted by $g$).
A simple example is depicted in \cref{fig:path:reduction};
more generally, in reference to this figure, there may be more vertices on the path joining $r$ with $g(p)$, in which case the edge between $r$ and $p$ needs to be expanded accordingly to find a graph pair diagram for $g$ that switches $p$ and $g(p)$.
Since $r = \llbracket x i 1 \bar{n} \rrbracket$, we have that $p = \llbracket x j \bar{n} \rrbracket$ for some $j$ that is not $n$ (as noted above) nor $1$ (or $p$ would not be an endpoint).
Then we choose $g$ to be the unique involution that switches the edges $xj$ and $xn$ without changing anything else.
Now, $g(P)$ admits the aforementioned reduction and we can apply our induction hypothesis on $g(p)$ and $g(q) = q$, finding an element $h \in G_n$ such that $hg(A) = A_0$ and $H^{hg}$ is the desired copy of Thompson's group $F$ acting on $A$.
\end{proof}

\begin{figure}
\centering
\begin{tikzpicture}[scale=1.333]
    \node[vertex] (c) at (0,0) {};
    \node[vertex] (l) at (-2,0) {};
    \node[vertex] (t) at (0,2) {};
    \node[vertex] (b) at (0,-2) {};
    \node[vertex] (r) at (2,0) {};
        \node[vertex] (rc) at (1,0) {};
        \node[vertex,blue] (rt) at (1,.8) {}; \draw (1,.8) node[above right,blue]{$p$};
            \node[vertex,Green] (rtc) at (1,.4) {}; \draw (1,.4) node[above right,Green]{$r$};
            \node[vertex] (rtl) at (.7,.4) {};
            \node[vertex] (rtr) at (1.3,.4) {};
        \node[vertex] (rb) at (1,-.8) {};
    \draw[black] (c) to (l);
    \draw[black] (c) to (t);
    \draw[black] (c) to (b);
    \draw[black] (rc) to (c);
    \draw[black] (rc) to (r);
    \draw[red] (rtc) to (rtl);
    \draw[red] (rtc) to (rtr);
    \draw[red] (rtc) to (rc);
    \draw[red] (rtc) to (rt);
    \draw[black] (rc) to (rb);
\end{tikzpicture}
\caption{In the proof of \cref{lem:thomps}, \textcolor{blue}{$p$} and \textcolor{Green}{$r$} cannot be as in this example or the \textcolor{red}{red edges} could be reduced.}
\label{fig:lem:thomps}
\end{figure}
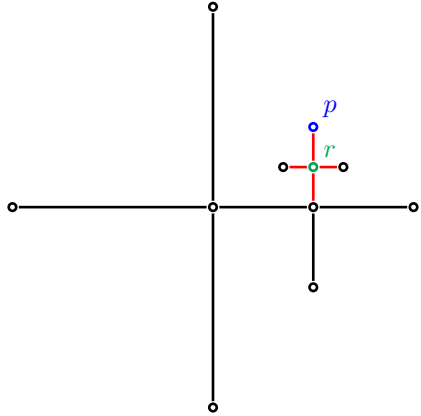

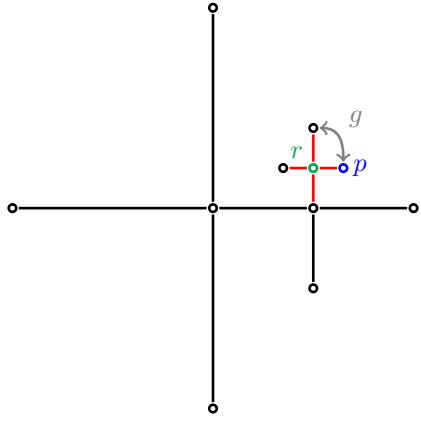
\begin{figure}
\centering
\begin{tikzpicture}[scale=1.333]
    \node[vertex] (c) at (0,0) {};
    \node[vertex] (l) at (-2,0) {};
    \node[vertex] (t) at (0,2) {};
    \node[vertex] (b) at (0,-2) {};
    \node[vertex] (r) at (2,0) {};
        \node[vertex] (rc) at (1,0) {};
        \node[vertex] (rt) at (1,.8) {};
            \node[vertex,Green] (rtc) at (1,.4) {}; \draw (1,.4) node[above left,Green]{$r$};
            \node[vertex] (rtl) at (.7,.4) {};
            \node[vertex,blue] (rtr) at (1.3,.4) {}; \draw (1.3,.4) node[right,blue]{$p$};
        \node[vertex] (rb) at (1,-.8) {};
    \draw[black] (c) to (l);
    \draw[black] (c) to (t);
    \draw[black] (c) to (b);
    \draw[black] (rc) to (c);
    \draw[black] (rc) to (r);
    \draw[red] (rtc) to (rtl);
    \draw[red] (rtc) to (rtr);
    \draw[red] (rtc) to (rc);
    \draw[red] (rtc) to (rt);
    \draw[black] (rc) to (rb);
    \draw[<->,gray] (rtr) to[out=90,in=0,looseness=1.25] node[above right]{$g$} (rt);
\end{tikzpicture}
\caption{An example of the element $g$ found in the proof of \cref{lem:thomps} that causes a reduction of the \textcolor{red}{red edges}.}
\label{fig:path:reduction}
\end{figure}

\begin{remark}
Given any two rational arcs $A_1$ and $A_2$, the subgroups $H_{A_1}$ and $H_{A_2}$ are conjugate in $G_n$ if and only if $A_1$ and $A_2$ are of the same type (in the sense of \cref{def:arc:type}).
\end{remark}


\section{Density of Dendrite Rearrangement Groups}
\label{sec:dns}

In this Section we find useful transitivity properties of the action of dendrite rearrangement groups on the set of branch points that allow to ``approximate'' any homeomorphism of $D_n$ by rearrangements.
We will use this to prove that each dendrite rearrangement group $G_n$ is dense in $\mathbb{H}_n$, where $\mathbb{H}_n$ denotes the full group of homeomorphisms of the Wa\.zewski dendrite $D_n$.
This argument is similar to that used in \cite{BasilicaDense} to show that the rearrangement group $T_B$ of the Basilica is dense in the full group of orientation-preserving homeomorphisms of the Basilica.

\subsection{Transitivity Properties of Dendrite Rearrangement Groups}

The next Proposition is an analog of \cite[Proposition 6.1]{DM19}, which goes to show that $G_n$ has many of the transitivity properties of $\mathbb{H}_n$ despite being only a countable group.
The construction of the tree $T(\mathcal{F})$ described below is heavily inspired by the one given at the beginning of Section 6 of \cite{DM19}, which in turn is not much different from the construction used in \cite[Theorem 4.14]{belk2016rearrangement} for studying the rearrangement group of the Vicsek fractal (which is homeomorphic to a dendrite, see \cref{sub:Vic}).
Similar tree approximations for dendrites are common and have appeared elsewhere in literature (see, for example, Section X.3 of \cite{continua}).

\phantomsection\label{txt:trees}
Let $\mathcal{F}$ be a finite subset of $\mathrm{Br}$ and consider the unique subdendrite $[\mathcal{F}]$ of $D_n$ that contains $\mathcal{F}$ and is minimal with respect to set inclusion.
It is homeomorphic to the geometric realization of a finite simplicial tree that has a vertex for each element of $\mathcal{F}$ (and possibly more vertices where the tree needs to branch out).
This simplicial tree is not unique, since we can choose to add vertices in the middle of any edge, so we choose $T(\mathcal{F})$ to be the minimal among these trees, and since $\mathcal{F} \subset \mathrm{Br}$ we have that each vertex of $T(\mathcal{F})$ is a branch point.
A simple example is depicted in \cref{fig:T(F)}.

\begin{figure}
\centering
\begin{tikzpicture}
    \node[anchor=south west,inner sep=0] (image) at (0,0) {\includegraphics[width=0.448\textwidth]{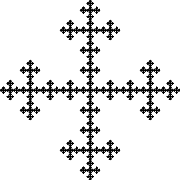}};
    \begin{scope}[x={(image.south east)},y={(image.north west)}]
        \draw[VioletRed] (.4966,.7227) node[circle,fill,inner sep=2pt]{} node[right=.2cm] {A};
        \draw[Green] (.8259,.5034) node[circle,fill,inner sep=2pt]{} node[above left]{B};
        \draw[BurntOrange] (.6062,.174) node[circle,fill,inner sep=2pt]{} node[above right]{C};
        \draw[blue] (.3868,.1744) node[circle,fill,inner sep=2pt]{} {} node[above left]{D};
    \end{scope}
\end{tikzpicture}
\hspace{15pt}
\begin{tikzpicture}[scale=2.8]
    \draw[white] (-1,-1) -- (-1,1) -- (1,1) -- (1,-1) -- (-1,-1);
    \draw[VioletRed] (0,.45) node[circle,fill,inner sep=2pt] (purple) {} node[right] {A};
    \draw[Green] (.667,0) node[circle,fill,inner sep=2pt] (green) {} node[above left]{B};
    \draw[BurntOrange] (.25,-.667) node[circle,fill,inner sep=2pt] (red) {} node[above right]{C};
    \draw[blue] (-.25,-.667) node[circle,fill,inner sep=2pt] (blue) {} node[above left]{D};
    \node[black,circle,fill,inner sep=2pt] (center) at (0,0) {};
    \node[black,circle,fill,inner sep=2pt] (bottom) at (0,-.667) {};
    \draw (center) to (green);
    \draw (center) to (purple);
    \draw (center) to (bottom);
    \draw (bottom) to (red);
    \draw (bottom) to (blue);
\end{tikzpicture}
\caption{An example of the tree $T(\mathcal{F})$ depicted beside the finite subset $F \subset \mathrm{Br}$, for $n=4$.}
\label{fig:T(F)}
\end{figure}

\begin{proposition}
\label{prop:trans}
Given two finite subsets $\mathcal{F}_1$ and $\mathcal{F}_2$ of $\mathrm{Br}$, any graph isomorphism between $T(\mathcal{F}_1)$ and $T(\mathcal{F}_2)$ can be extended to an element of $G_n$.
\end{proposition}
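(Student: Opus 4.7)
The plan is induction on the number $k$ of vertices of $T(\mathcal{F}_1)$, leveraging two main tools: the transitivity of $G_n$ on $\mathrm{Br}$ (\cref{prop:gen:trans}) and the $\mathrm{Sym}(n)$-action of every branch point's stabilizer on its branches (\cref{lem:perm}). The base case $k=1$ is immediate: the isomorphism maps one branch point to another, and \cref{prop:gen:trans} supplies an element of $G_n$ doing so.

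For the inductive step with $k \geq 2$, first pick a leaf $v$ of $T(\mathcal{F}_1)$; since any auxiliary vertex of $T(\mathcal{F})$ has degree at least $3$, necessarily $v \in \mathcal{F}_1$. Let $u$ be the unique neighbor of $v$ in $T(\mathcal{F}_1)$ and set $v' := \phi(v)$, $u' := \phi(u)$. The restriction of $\phi$ descends to a graph isomorphism between $T(\mathcal{F}_1 \setminus \{v\})$ and $T(\mathcal{F}_2 \setminus \{v'\})$, trees with strictly fewer vertices (possibly two fewer, if $u$ was present in $T(\mathcal{F}_1)$ only as an auxiliary vertex whose degree drops to $2$ after the deletion). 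By induction there is $g' \in G_n$ extending this restricted isomorphism; it remains only to modify $g'$ so that the final product still extends the restricted isomorphism but additionally sends $v$ to $v'$.

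Because $v$ (respectively $v'$) is a leaf of its tree, $g'(v)$ (respectively $v'$) must lie in a branch of $D_n$ at $u'$ that is \emph{unused} by $T(\mathcal{F}_2 \setminus \{v'\})$. If the two points lie in different unused branches, I invoke \cref{lem:perm} to produce an element of $G_n$ fixing $u'$ and every other branch pointwise while transposing the two unused branches in question — the construction in the proof of \cref{lem:perm} produces the $\mathrm{Sym}(n)$-action via rigid permutations of isomorphic subgraphs, which can indeed be taken to be the identity on fixed branches. We may then reduce to the case where $g'(v)$ and $v'$ share a common branch $\bar{B}$ at $u'$.

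The closure $\bar{B}$ is a subdendrite canonically homeomorphic to a cell of $\mathcal{D}_n$, and rearrangements of $D_n$ supported inside $\bar{B}$ form a subgroup that acts on $\bar{B}$ exactly as $G_n$ acts on $D_n$. Applying \cref{prop:gen:trans} to this local copy produces an element moving $g'(v)$ to $v'$ while fixing the complement of $\bar{B}$, completing the induction. The main delicate point is this last localization step: one must justify that rearrangements realized by graph pair diagrams inside a single cell, extended by the identity elsewhere, are bona fide elements of $G_n$. This should be transparent from the graph pair diagram definition of rearrangements, since any sufficiently refined expansion containing $u'$ as a vertex and $\bar{B}$ as a subgraph admits graph pair diagrams concentrated entirely within the subgraph corresponding to $\bar{B}$.
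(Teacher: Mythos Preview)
Your overall inductive strategy matches the paper's, but there is a genuine gap in the inductive step. You apply the induction hypothesis to $T(\mathcal{F}_1\setminus\{v\})$ and explicitly allow for the possibility that the neighbour $u$ is also deleted (when it was auxiliary of degree $3$). In that case, however, the element $g'$ you obtain need not send $u$ to $u'$: it only controls the surviving vertices, and $g'(u)$ is merely some branch point on the arc joining the two remaining subtrees, in general different from $u'$. Your subsequent claim that $g'(v)$ lies in a branch at $u'$ \emph{unused} by $T(\mathcal{F}_2\setminus\{v'\})$ then fails, since $g'(v)$ may well sit in the branch at $u'$ containing $g'(u)$ together with an entire side of the remaining tree. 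The paper sidesteps this by applying the induction hypothesis to $T(V_1\setminus\{v\})$, where $V_1$ is the full vertex set of $T(\mathcal{F}_1)$, rather than to $T(\mathcal{F}_1\setminus\{v\})$; this guarantees that exactly one vertex is removed and hence that $g'(u)=u'$.

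A secondary imprecision concerns your final localization step. The assertion that rearrangements supported in $\bar B$ act on $\bar B$ ``exactly as $G_n$ acts on $D_n$'' is not literally correct: when $\bar B$ is a single cell, the rigid stabilizer corresponds under the canonical identification to the pointwise stabilizer in $G_n$ of two specific endpoints, a proper subgroup; and a general branch is a finite union of cells rather than a single one. It is plausible that this subgroup still acts transitively on branch points inside $\bar B$, but that needs its own argument. The paper avoids this by invoking \cref{lem:thomps} instead: once the two points lie in the same branch at $u'$, choose any rational arc $A$ inside that branch whose interior contains both, and use an element of the Thompson subgroup $H_A$ to carry one to the other.
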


\begin{proof}
Given a graph isomorphism $f \colon T(\mathcal{F}_1) \to T(\mathcal{F}_2)$, we proceed by induction on the number $m$ of vertices of $T(\mathcal{F}_1)$ and $T(\mathcal{F}_2)$.
If $m = 1$, then $\mathcal{F}_1 = \{p\}$, $\mathcal{F}_2 = \{q\}$ and \cref{prop:gen:trans} provides an element of $G$ that maps $p$ to $q$.

Suppose that $m \geq 2$.
Consider the trees $T(\mathcal{F}_1)^*$ and $T(\mathcal{F}_2)^*$
obtained by removing any leaf $l$ from $T(\mathcal{F}_1)$ and its image $f(l)$ from $T(\mathcal{F}_2)$, respectively.
Observe that $T(\mathcal{F}_1)^* = T(V_1 \setminus \{l\})$ and $T(\mathcal{F}_2)^* = T(V_2 \setminus \{f(l)\})$, where each $V_i$ is the set of vertices of the respective $T(\mathcal{F}_i)$, each corresponding to a branch point.
It is worth noting that $T(V_1 \setminus \{l\})$ and $T(V_2 \setminus \{f(l)\})$ are not always the same as $T(\mathcal{F}_1 \setminus \{l\})$ and $T(\mathcal{F}_2 \setminus \{f(l)\})$, since the two former trees might need to include a vertex that would be unnecessary in the latter.
In \cref{fig:T(F)} the removal from $\mathcal{F}$ of any of its points also causes the removal of one of the two additional vertices of $T(\mathcal{F})$;
for example removing the green vertex B also cause the removal of the black vertex that is adjacent to it.
By considering $T(V_1 \setminus \{l\})$ and $T(V_2 \setminus \{f(l)\})$ we are sure that only one vertex is removed.

We can then apply our induction hypothesis, which produces an element $g^* \in G$ that extends the graph isomorphism $f^* \colon T(\mathcal{F}_1)^* \to T(\mathcal{F}_2)^*$ induced (uniquely) from $f$.
It only remains to adjust the element $g^*$ so that it also maps $l$ to $f(l)$.
Let $v$ be unique vertex of $T(\mathcal{F}_2)$ that is adjacent to the leaf $f(l)$.
Note that there is at least one branch $B_1$ (respectively, $B_2$)  at $v$ that includes $v$ and $f(l)$ (respectively, $g^*(l)$) but does not include any other vertex of $T(\mathcal{F}_2)$;
it is possible that $B_1$ and $B_2$ are the same branch.
By \cref{lem:perm} we can choose an element $h_1$ of $G_n$ that switches the branches $B_1$ and $B_2$ and fixes every other point (in case $B_1 = B_2$, we choose $h_1$ to be the identity).
Then we have that $h_1$ fixes every vertex of $T(\mathcal{F}_2)^*$ and that $\tilde{l} = h_1 g^* (l)$ belongs to the same branch $B_1$ that contains $f(l)$.
Now that we have two branch points $\tilde{l} = h_1 g^* (l)$ and $f(l)$ belonging to the same branch $B_1$, we can use \cref{lem:thomps} to find an element $h_2 \in H_{A}$ that maps $h_1 g^* (l)$ to $f(l)$, where $A$ is any rational arc included in $B_1$ whose interior includes $f(l)$ and $\tilde{l}$.
Clearly $h_2$ must fix $v \in B_1$, and since every other vertex of $T(\mathcal{F}_2)^*$ does not belong to $B_1$ we have that the element $g = h_2 h_1 g^*$ maps each vertex of $T(\mathcal{F}_1)$ to its image under the action of $f$, as required.
\end{proof}

This transitivity property of $G_n$ on the set of branch points is the same that the full homeomorphism group $\mathbb{H}_n$ has by \cite[Proposition 6.1]{DM19}, so \cref{prop:trans} is essentially saying that every dendrite rearrangement group is as transitive as a subgroup of $\mathbb{H}_n$ can be on the set of branch points.

As an immediate consequence of \cref{prop:trans}, we have the two following facts (which are analogous to Corollaries 6.5 and 6.7 of \cite{DM19}, respectively).

\begin{corollary}
The stabilizer of a branch point is a maximal proper subgroup of $G_n$.
\end{corollary}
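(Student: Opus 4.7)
The plan is to derive this maximality from $2$-transitivity of the $G_n$-action on $\mathrm{Br}$, which follows almost immediately from \cref{prop:trans}. My first step is to observe that for any two distinct branch points $p_1,p_2$, the tree $T(\{p_1,p_2\})$ is simply a single edge with vertex set $\{p_1,p_2\}$: no intermediate branching vertex is required, since the subdendrite $[\{p_1,p_2\}] = [p_1,p_2]$ is an arc and does not branch. Consequently, for any other pair of distinct branch points $q_1,q_2$, the map $p_i \mapsto q_i$ is a graph isomorphism $T(\{p_1,p_2\}) \to T(\{q_1,q_2\})$, and by \cref{prop:trans} it extends to an element of $G_n$. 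This gives that the action of $G_n$ on $\mathrm{Br}$ is $2$-transitive.

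Next I would carry out the standard maximality argument. Let $S = \mathrm{Stab}_{G_n}(p)$ and let $H$ be a subgroup of $G_n$ strictly containing $S$. Fix $g \in H \setminus S$ and set $p' = g(p) \neq p$. For any $q \in \mathrm{Br} \setminus \{p\}$, $2$-transitivity applied to the pairs $(p,p')$ and $(p,q)$ produces an element $s \in G_n$ with $s(p) = p$ (so $s \in S \subseteq H$) and $s(p') = q$. Then $sg \in H$ satisfies $sg(p) = q$, so the $H$-orbit of $p$ equals all of $\mathrm{Br}$.

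To conclude, given any $k \in G_n$, set $q := k(p) \in \mathrm{Br}$. The transitivity just established provides $h \in H$ with $h(p) = q$, so $h^{-1}k$ fixes $p$ and hence lies in $S \subseteq H$, giving $k \in H$. Thus $H = G_n$, proving that $S$ is a maximal proper subgroup (it is proper because $\mathrm{Br}$ is infinite and $G_n$ is transitive on it). I do not anticipate any real obstacle here, since essentially all the work has been done in \cref{prop:trans}; the only substantive observation is that $T(\{p_1,p_2\})$ is combinatorially trivial, which feeds directly into \cref{prop:trans} to yield $2$-transitivity and hence primitivity of the action.
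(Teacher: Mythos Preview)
Your proposal is correct and matches the paper's intent: the paper does not spell out a proof at all, merely recording the corollary as an ``immediate consequence'' of \cref{prop:trans} (and pointing to the analogous Corollary~6.5 of \cite{DM19}). Your argument---observing that $T(\{p_1,p_2\})$ is a single edge, invoking \cref{prop:trans} to obtain $2$-transitivity on $\mathrm{Br}$, and then running the standard primitivity-implies-maximal-stabilizer argument---is precisely the expected way to unpack that one-line claim.
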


\begin{corollary}
The action of $G_n$ on the set $\mathrm{Br}$ is oligomorphic.
\end{corollary}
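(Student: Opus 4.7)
The plan is to put $G_n$-orbits on finite subsets of $\mathrm{Br}$ in bijection with isomorphism classes of a bounded combinatorial object, and then bound the number of those classes. To each finite $\mathcal{F}\subset\mathrm{Br}$ I attach the labelled tree $(T(\mathcal{F}),\mathcal{F})$, with the labelling recording which vertices of $T(\mathcal{F})$ lie in $\mathcal{F}$.

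The first step is to check that the $G_n$-orbit equivalence on $k$-element subsets coincides with labelled-tree isomorphism. One direction is \cref{prop:trans}: any graph isomorphism $T(\mathcal{F}_1)\to T(\mathcal{F}_2)$ that sends $\mathcal{F}_1$ to $\mathcal{F}_2$ extends to an element of $G_n$. For the converse, if $g\in G_n$ satisfies $g(\mathcal{F}_1)=\mathcal{F}_2$, then $g$ is a homeomorphism, hence it sends the minimal subdendrite $[\mathcal{F}_1]$ to $[\mathcal{F}_2]$ and preserves branching orders in the dendrite; since the non-distinguished vertices of $T(\mathcal{F}_i)$ are precisely the points of $[\mathcal{F}_i]$ at which $[\mathcal{F}_i]$ branches, $g$ induces a labelled isomorphism between $T(\mathcal{F}_1)$ and $T(\mathcal{F}_2)$.

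The heart of the proof is a double bound on these trees. By the minimality in the definition of $T(\mathcal{F})$, every vertex outside $\mathcal{F}$ must have tree-degree at least $3$ (otherwise it could be absorbed), while vertices in $\mathcal{F}$ have degree at least $1$. Plugging these inequalities into the handshake identity $\sum_{v}\deg(v)=2(m-1)$ for a tree with $m$ vertices gives $2(m-1)\geq 3(m-k)+k$, i.e.\ $m\leq 2k-2$ for $k\geq 2$. Moreover, every vertex of $T(\mathcal{F})$ is a branch point of $D_n$, so its tree-degree is at most $n$. The labelled trees $(T(\mathcal{F}),\mathcal{F})$ with $|\mathcal{F}|=k$ therefore range over only finitely many isomorphism classes, and by the previous paragraph the $G_n$-orbits on $k$-subsets of $\mathrm{Br}$ are in bijection with these classes. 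Applying the same argument to trees whose distinguished vertices carry an ordered labelling (allowing repetitions) shows that $G_n$ has finitely many orbits on $k$-tuples of $\mathrm{Br}$ as well, which is precisely the oligomorphic property.

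The delicate point I would be most careful about is the second direction of the correspondence: one needs the extra vertices of $T(\mathcal{F})$ to be an intrinsic feature of the pair $(\mathcal{F},D_n)$ rather than an artifact of a simplicial choice, so that any homeomorphism of $D_n$ carrying $\mathcal{F}_1$ to $\mathcal{F}_2$ automatically carries the Steiner-type vertices of $T(\mathcal{F}_1)$ to those of $T(\mathcal{F}_2)$. Once this is pinned down, the combinatorial bounds above deliver the result with no further effort.
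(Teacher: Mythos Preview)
Your argument is correct and is precisely the natural unpacking of what the paper leaves implicit: the paper states the corollary as an immediate consequence of \cref{prop:trans} (in the same way \cite[Corollary~6.7]{DM19} follows from \cite[Proposition~6.1]{DM19}) without spelling out the finiteness count, and your proof supplies exactly that count via the Steiner-tree construction $T(\mathcal{F})$ and the handshake bound $m\le 2k-2$. One minor remark: the degree bound by $n$ is true but unnecessary for the conclusion, since a bound on the number of vertices already forces finitely many isomorphism types of trees; everything else is exactly as intended.
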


Recall that a $G$-action on a set $X$ is \textbf{oligomorphic} if, for every $m \in \mathbb{N}$, the diagonal $G$-action on the set of $m$-tuples of $X$ has finitely many orbits.
For more information about oligomorphic group actions, see \cite{Oligomorphic}.

\subsection{Density in the Full Group of Homeomorphisms}
\label{sub:dns}

Here we prove that \cref{prop:trans} has the following consequence.

\begin{theorem}
\label{thm:dense}
For every $n \geq 3$, the rearrangement group $G_n$ for the dendrite replacement system $\mathcal{D}_n$ is dense in $\mathbb{H}_n = \mathrm{Homeo}(D_n)$.
\end{theorem}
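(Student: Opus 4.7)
The plan is to combine the strong transitivity from \cref{prop:trans} with the uniform continuity of $\phi$ on the compact metric space $D_n$. Given $\epsilon > 0$, first choose $\delta > 0$ such that $d(x, y) < \delta$ implies $d(\phi(x), \phi(y)) < \epsilon/3$. Since cells of expansions of $\mathcal{D}_n$ form a basis of the topology of $D_n$ whose diameters shrink to zero under refinement, fix an expansion $\Lambda$ of $\mathcal{D}_n$ all of whose cells have diameter less than $\delta$, and let $\mathcal{F}$ be the set of branch points occurring as vertices of $\Lambda$. Every cell of $\Lambda$ has at least one element of $\mathcal{F}$ on its boundary, since every edge of an expansion of $\mathcal{D}_n$ has at least one branch-point endpoint.

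Because $\phi$ is a homeomorphism and being a point of order $n$ is topologically invariant, $\phi$ maps $\mathcal{F}$ bijectively onto $\phi(\mathcal{F}) \subset \mathrm{Br}$ and induces a graph isomorphism $T(\mathcal{F}) \to T(\phi(\mathcal{F}))$. By \cref{prop:trans} this isomorphism extends to some $g \in G_n$, so $g(v) = \phi(v)$ for every vertex $v$ of $T(\mathcal{F})$. For any $x \in D_n$ lying in a cell $C$ of $\Lambda$ and any $u \in \mathcal{F} \cap \partial C$, we have $d(x, u) < \delta$ and hence $d(\phi(x), g(u)) = d(\phi(x), \phi(u)) < \epsilon/3$. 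The goal $\sup_{x \in D_n} d(g(x), \phi(x)) < \epsilon$ therefore reduces, via the triangle inequality, to controlling the diameter of $g(C)$ for every cell $C$ of $\Lambda$.

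The main obstacle is precisely this last control, since \cref{prop:trans} in the form stated guarantees agreement only on vertices, not on cells. My plan is to realize $g$ through a graph pair diagram $(\Lambda, \Lambda', f)$ in which the range expansion $\Lambda'$ is constructed to contain $\phi(\mathcal{F})$ among its branch-point vertices, mimicking the expansion sequence producing $\Lambda$; this is possible because every finite collection of branch points can be realized as vertices of some expansion of $\mathcal{D}_n$. By uniform continuity of $\phi$, the branch points of $\Lambda'$ sit in positions close to the $\phi$-images of those of $\Lambda$, so the cells of $\Lambda'$ can be arranged to have diameter below $\epsilon/3$. Then $g(C)$ is a cell of $\Lambda'$ of diameter less than $\epsilon/3$, and combining this with the earlier bound yields $d(g(x), \phi(x)) < 2\epsilon/3 < \epsilon$ as required.
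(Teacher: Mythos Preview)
Your overall strategy---working directly with the uniform metric rather than invoking the topology identification---is reasonable in spirit, but the final paragraph contains a genuine gap.

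You propose to realize $g$ via a graph pair diagram $(\Lambda,\Lambda',f)$ where $\Lambda'$ is an expansion whose branch-point vertices contain $\phi(\mathcal{F})$. For $(\Lambda,\Lambda',f)$ to be a graph pair diagram, $f$ must be a graph \emph{isomorphism}, so $\Lambda$ and $\Lambda'$ must have the same number of branch-point vertices; this forces the branch-point vertices of $\Lambda'$ to be \emph{exactly} $\phi(\mathcal{F})$. But the branch-point set of any expansion of $\mathcal{D}_n$ is very constrained: it always contains $p_0$, and more generally the corresponding set of address words is prefix-closed. An arbitrary homeomorphism $\phi$ does not respect the addressing of $D_n$, so $\phi(\mathcal{F})$ typically fails both conditions (for instance, whenever $\phi^{-1}(p_0)\notin\mathcal{F}$), and no such $\Lambda'$ exists. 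Even setting this aside, the assertion that ``the cells of $\Lambda'$ can be arranged to have diameter below $\epsilon/3$'' by uniform continuity of $\phi$ is not justified: uniform continuity controls the diameters of the sets $\phi(C)$ for cells $C$ of $\Lambda$, but these are not cells of any expansion, and there is no mechanism linking them to the cell diameters of $\Lambda'$.

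The paper sidesteps all of this by quoting \cite[Proposition~2.4]{DM19}: the uniform topology on $\mathbb{H}_n$ coincides with the topology of pointwise convergence on $\mathrm{Br}$, so density follows immediately from \cref{prop:trans} with no metric estimates needed. If you want to avoid that citation and argue metrically, a workable fix is to reverse the order of your construction: first choose a fine expansion $\Lambda'$ with all cells of diameter $<\epsilon/2$, set $\mathcal{F}'$ to be its branch-point vertices and $\mathcal{F}=\phi^{-1}(\mathcal{F}')$, and then invoke \cref{prop:trans} to get $g$ agreeing with $\phi$ on $\mathcal{F}$. One then checks that for each connected component $U$ of $D_n\setminus\mathcal{F}$, the images $\phi(U)$ and $g(U)$ are components of $D_n\setminus\mathcal{F}'$ with the same boundary in $\mathcal{F}'$, hence lie in cells of $\Lambda'$ sharing a vertex, giving $d(g(x),\phi(x))<\epsilon$. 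This is essentially a direct proof of (the relevant consequence of) the cited result from \cite{DM19}, and it is not what you wrote.
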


As done in \cite{DM19}, here we endow the group $\mathbb{H}_n$ with its topology of uniform convergence, which makes it a Polish group (i.e., a separable and completely metrizable topological group).
By \cite[Proposition 2.4]{DM19}, this topology of $\mathbb{H}_n$ is inherited by the pointwise convergence topology of $\mathrm{Sym}\big(\mathrm{Br}\big)$, so in order to prove \cref{thm:dense} it suffices to show that the permutations of $\mathrm{Br}$ induced by elements of $\mathbb{H}_n$ can be ``approximated'' by those of $G_n$.
Thus, \cref{thm:dense} is equivalent to the following.

\begin{claim}
\label{clm:dns}
Let $\phi \in \mathbb{H}_n$.
For every $k \geq 1$ and for each $p_1, \dots, p_k \in \mathrm{Br}$ there exists a rearrangement $g_k \in G_n$ such that $g_k(p_i) = \phi(p_i)$ for all $i=1,\dots,k$.
\end{claim}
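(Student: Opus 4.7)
The plan is to reduce the claim to Proposition \ref{prop:trans} via the fact that a homeomorphism of $D_n$ respects all the combinatorial data encoded in the trees $T(\mathcal{F})$. First I would set $\mathcal{F}_1 = \{p_1, \dots, p_k\}$ and $\mathcal{F}_2 = \phi(\mathcal{F}_1) = \{\phi(p_1), \dots, \phi(p_k)\}$. Since branch points are characterized topologically (a point of order $n$ in $D_n$), any homeomorphism permutes $\mathrm{Br}$; in particular $\mathcal{F}_2 \subset \mathrm{Br}$, so both trees $T(\mathcal{F}_1)$ and $T(\mathcal{F}_2)$ are defined.

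Next I would verify that $\phi$ induces a graph isomorphism $\phi_* \colon T(\mathcal{F}_1) \to T(\mathcal{F}_2)$ with $\phi_*(p_i) = \phi(p_i)$. The minimal subdendrite containing a finite set is a purely topological construction (it is the union of the arcs joining the points), so $\phi([\mathcal{F}_1]) = [\mathcal{F}_2]$. The extra vertices of $T(\mathcal{F}_i)$ beyond $\mathcal{F}_i$ are precisely the points of $[\mathcal{F}_i]$ where three or more arcs to elements of $\mathcal{F}_i$ meet, i.e.\ the branch points of $[\mathcal{F}_i]$ of valence $\geq 3$ in the tree structure. This is again preserved by $\phi$ (both the property of being a branch point of the subdendrite and the valence), so $\phi$ sends the extra vertices of $T(\mathcal{F}_1)$ bijectively onto the extra vertices of $T(\mathcal{F}_2)$ and preserves adjacency along minimal arcs. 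Hence $\phi_*$ is a bona fide graph isomorphism $T(\mathcal{F}_1) \to T(\mathcal{F}_2)$.

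Finally, I would apply Proposition \ref{prop:trans} to $\phi_*$ to produce an element $g_k \in G_n$ that extends $\phi_*$. In particular $g_k(p_i) = \phi_*(p_i) = \phi(p_i)$ for every $i = 1, \dots, k$, as required. Together with the fact that the topology on $\mathbb{H}_n$ coincides with the pointwise convergence topology on $\mathrm{Sym}(\mathrm{Br})$ (by \cite[Proposition 2.4]{DM19}), this yields density of $G_n$ in $\mathbb{H}_n$.

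I do not expect a serious obstacle: the bulk of the work has already been shouldered by Proposition \ref{prop:trans}. The only point requiring genuine care is the second paragraph, namely checking that $\phi$ really does induce an isomorphism of the minimal trees $T(\mathcal{F}_i)$ and not just a bijection of their underlying dendrites; this is a routine but non-vacuous topological verification that the ``extra'' vertices and their incidences depend only on the subdendrite and the distinguished subset $\mathcal{F}_i$, both of which $\phi$ preserves.
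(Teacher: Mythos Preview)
Your proposal is correct and follows essentially the same route as the paper: set $\mathcal{F}_1=\{p_1,\dots,p_k\}$ and $\mathcal{F}_2=\phi(\mathcal{F}_1)$, observe that $\phi$ carries $[\mathcal{F}_1]$ homeomorphically onto $[\mathcal{F}_2]$ and hence induces a graph isomorphism $T(\mathcal{F}_1)\to T(\mathcal{F}_2)$, and then invoke Proposition~\ref{prop:trans}. If anything, your second paragraph is slightly more explicit than the paper about why the extra (non-$\mathcal{F}_i$) vertices of the trees are preserved by $\phi$, but the argument is the same.
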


\begin{proof}
Consider a homeomorphism $\phi$ of $D_n$ such that $\phi(p_i) = q_i$ for branch points $p_1, \dots, p_k, q_1, \dots, q_k$.
Let $\mathcal{F}_1 = \{p_1, \dots, p_k\}$ and $\mathcal{F}_2 = \{q_1, \dots, q_k\}$ and consider the trees $T(\mathcal{F}_1)$ and $T(\mathcal{F}_2)$ obtained as described at \cpageref{txt:trees}.
Consider the unique minimal subdendrites $[\mathcal{F}_1]$ and $[\mathcal{F}_2]$ of $D_n$ that contain $\mathcal{F}_1$ and $\mathcal{F}_2$, respectively;
these are naturally homeomorphic to the geometric realizations of $T(\mathcal{F}_1)$ and $T(\mathcal{F}_2)$.
The restriction of $\phi$ to $[\mathcal{F}_1]$ is a homeomorphism $\phi|_{[\mathcal{F}_1]} \colon [\mathcal{F}_1] \to [\mathcal{F}_2]$ that maps each vertex $p_i$ of $T(\mathcal{F}_1)$ to the vertex $q_i$ of $T(\mathcal{F}_2)$, so it restricts to a graph isomorphism $T(\mathcal{F}_1) \to T(\mathcal{F}_2)$ defined by the mapping $p_i \mapsto q_i$.
By \cref{prop:trans}, there exists a rearrangement $g \in G_n$ that extends the graph isomorphism $T(\mathcal{F}_1) \to T(\mathcal{F}_2)$, meaning that it maps each $p_i$ to $q_i$, as needed.
\end{proof}

In the next Section, with \cref{prop:comm:trans,cor:comm:dns}, using these very same arguments, we will see that the commutator subgroup $[G_n, G_n]$ shares strong transitivity properties with $G_n$ and thus it is also dense in $\mathbb{H}_n$ despite being significantly ``smaller'' than $G_n$ (it has infinite index by \cref{thm:commutator}).

Observe that being dense in an ambient group is not new behaviour in the world of Thompson groups.
Indeed, \cite[Corollary A5.8]{PL-homeo} and \cite[Proposition 4.3]{Tdense} show that Thompson groups $F$ and $T$ are dense in the groups $\mathrm{Homeo}^+([0,1])$ and $\mathrm{Homeo}^+(S^1)$ of orientation-preserving homeomorphisms of the unit interval and the unit circle, respectively.
Also, Thompson's group $V$ is dense in both the group $\mathrm{AAut}(\mathcal{T}_2)$ of almost automorphisms of the binary tree (noted in \cite{Vdense}) and also in the group $\mathrm{Homeo}(\mathfrak{C})$ of homeomorphisms of the Cantor space with its Polish compact-open topology (see \cref{rmk:V:dns} below).
Notably, the very recent work \cite{BasilicaDense} shows that the rearrangement group of the Basilica is dense in the group of all orientation-preserving homeomorphisms of the Basilica.
Later in \cref{sub:Air} we will see that the Airplane rearrangement group is also dense in the group of orientation-preserving homeomorphisms of a dendrite.
All of these results naturally prompt the following question, which was also asked in \cite[Final remarks (b)]{BasilicaDense}.

\begin{question}
When is a rearrangement group of a topological space $X$ dense in the full group of homeomorphisms of $X$?
\end{question}

This question might be related to recent findings and questions about quasi-symmetries of fractals. Namely, \cite{BasilicaQuasiSymmetries} shows that the Basilica rearrangement group $T_B$ is in a certain sense dense in the group of planar quasi-symmetries of the Basilica Julia set, and \cite{QuasiSymmetries} asks if it is always possible to find a finitely generated dense subgroup of the orientation-preserving group of homeomorphisms.

We believe that this question is interesting for practical reasons.
Finding a Thompson-like group $G$ that is dense in an interesting uncountable group $\mathbb{H}$ essentially allows to approximate the ``non-computable'' group $\mathbb{H}$ with the simpler group $G$ that can probably be easily handled by a computer.

\begin{remark}
\label{rmk:V:dns}
To the best of the author's knowledge, there is no proof in literature that $V$ is dense in the group $\mathrm{Homeo}(\mathfrak{C})$ of homeomorphisms of the Cantor set with its compact-open topology.
Thus, although we do not claim the novelty of this result, we provide a quick sketch of proof here for the sake of completeness.

First, observe that $\mathrm{Homeo}(\mathfrak{C})$ with its compact-open topology is the same as the group of automorphisms of the Boolean algebra of clopen subsets of the Cantor space $\mathfrak{C}$ with its pointwise convergence topology (this was noted, for example, at the beginning of Section 2 of \cite{TDLCCantor}).
Then it suffices to show that, for every homeomorphism $\phi$ of $\mathfrak{C}$ and for each finite set $\mathcal{S}$ of clopen subsets of $\mathfrak{C}$, there exists an element $g$ of Thompson's group $V$ such that $g(A) = \phi(A)$ for all $A \in \mathcal{S}$.
Showing this is straightforward:
if we view the Cantor space $\mathfrak{C}$ as the set of infinite words in the alphabet $\{0,1\}$, then any clopen set is the disjoint union of finitely many cones (recall that a cone is a subset of $\mathfrak{C}$ consisting of all of those sequences that have a certain common prefix);
if we refine the set $\mathcal{S}$ of clopen subsets of $\mathfrak{C}$ to a set $\mathcal{S}^*$ of cones, then it is immediate to build an element of $V$ that maps each cone of $\mathcal{S}^*$ canonically to its image under the homeomorphism $\phi$.
\end{remark}


\section{The Commutator Subgroup}
\label{sec:comm}

In this Section we first show that the abelianization of every dendrite rearrangement group is $\mathbb{Z}_2 \oplus \mathbb{Z}$ (\cref{sub:abel}), then we prove that the commutator subgroup $[G_n, G_n]$ is dense in the full homeomorphism group $\mathbb{H}_n$ and that, with the possible exception of $n=3$, it is simple.

\subsection{The Abelianization of Dendrite rearrangement Groups}
\label{sub:abel}

In this Subsection we describe two families of maps:
the local parity maps $\pi_p$ ($p$ a branch point) and the local endpoint derivatives $\partial_q$ ($q$ an endpoint), both satisfying a chain rule that will allow us to define global versions of these maps.
Together, the parity map and the endpoint derivative will allow us to describe the elements of the commutator subgroup $[G_n, G_n]$ of a dendrite rearrangement group and to compute the abelianization of $G_n$ by applying Schreier's Lemma (which was also used in \cite{Belk_2015} for the computing abelianization of the Basilica rearrangement group $T_B$).

\subsubsection{The Parity Map}
\label{sub:parity}

The branches at a given branch point $p$ are naturally enumerated by $1, \dots, n$.
More precisely, there is a unique finite word $x$ for which $p = \llbracket x i 1 \overline{n} \rrbracket$;
each branch at $p$ intersects precisely one of the cells $\llbracket x i \rrbracket$ (and thus every cell whose address starts with $xi$):
this induces an enumeration of the branches at $p$ (i.e., a bijection with $\{1, \dots, n\}$).

A homeomorphism $g$ of $D_n$ induces a bijection between the branches at $p$ and those at $g(p)$.
This in turn induces a permutation $\sigma_{p,g} \in \mathrm{Sym}(n)$ according to the enumeration of the branches at $p$ and $g(p)$.
We define the \textbf{local parity map} at the branch point $p \in \mathrm{Br}$ as the mapping
\[ \pi_p \colon G_n \to \mathbb{Z}_2 \]
such that $\pi_p(g) = 0$ if $\sigma_{p,g}$ is an even permutation and $\pi_p(g) = 1$ if $\sigma_{p,g}$ is an odd permutation.

Note that the local parity maps satisfy the following chain rule:
\[ \pi_p (g h) = \pi_{h(p)} (g) + \pi_p (h). \]

Observe that, whenever a branch point $p$ does not appear in a graph pair diagram of a rearrangement $g$, one has $\pi_p(g)=0$ because canonical homeomorphisms of a cell $C(x)$ (which were defined at the beginning of \cref{sub:rearrangements}) preserve the enumeration of branches around the branch point $\llbracket x i 1 \bar{n} \rrbracket$.
In particular, this implies that the definition does not depend on the choice of graph pair diagram chosen to represent $g$.
This, together with the previously described chain rule and the fact that every rearrangement induces a permutation of $\mathrm{Br}$, allow us to define the (global) \textbf{parity map $\boldsymbol{\Pi}$} as
\[ \Pi \colon G_n \to \mathbb{Z}_2, \; g \mapsto \sum_{p \in \mathrm{Br}} \pi_p(g). \]
Because of the chain rule identity, it is easy to see that $\Pi$ is a group morphism.

\subsubsection{The Endpoint Derivative}

We define the \textbf{local endpoint derivative} at the rational endpoint $q \in \mathrm{REn}$ as the mapping
\[ \partial_q \colon G_n \to \mathbb{Z} \]
defined as follows.
If $(D,R,f)$ is a graph pair diagram for $g$, there is a unique edge $x$ of $D$ that is incident on $q$, and a unique edge $y = f(x)$ of $R$ that is incident on $f(q)$.
Denote by $L_D(x)$ and $L_R(y)$ the number of $n$'s at the end of $x$ and $y$, without considering the first letter in case the word is $n n \dots n$ (the first letter is special in the sense that it denotes an edge of the base graph instead of the replacement graph).
Then
\[ \partial_q(g) = L_D(x) - L_R(y). \]
This definition does not depend on the choice of graph pair diagram chosen to represent $g$, since expanding $x$ in $D$ always causes the expansion of $y$ in $R$, and both the resulting lengths $L_D$ and $L_R$ increase by $1$, leaving $\partial_q$ ultimately unchanged.

The intuition behind this definition is that $L_D(x)$ (or $L_R(y)$) represent how small of a portion of a ``straight branch'' $x$ (or $y$) is, when drawing $z1$ and $zn$ straight and aligned with $z$ when expanding an edge $z$.
In this sense, $\partial_q$ looks like a logarithmic derivative in a small enough neighbourhood of the endpoint $q$.

Note that, as the local parity maps do, the endpoint derivatives also satisfy the following chain rule:
\[ \partial_q (g h) = \partial_{h(q)} (g) + \partial_q (h). \]

Observe that, whenever a rational endpoint $q$ does not appear in a graph pair diagram of a rearrangement $g$, one has $\partial_q(g)=0$.
Then, for the same reasons used for the parity map, we can define the (global) \textbf{endpoint derivative $\boldsymbol{\Delta}$} as
\[ \Delta \colon G_n \to \mathbb{Z}, \; g \mapsto \sum_{q \in \mathrm{REn}} \partial_q(g). \]
Because of the chain rule identity, $\Delta$ is a group morphism.

Before proceeding with the computation of the abelianization of $G_n$, we would like to mention that this idea of an endpoint derivative was previously used to characterize the commutator subgroup $[T_A, T_A]$ of the Airplane rearrangement group.
This in turn is related to the fact that the commutator subgroup of Thompson's group $F$ consists of the elements of $F$ that act trivially around the neighbourhoods, i.e., that preserve the numbers of $0$'s (respectively $1$'s) of the leftmost (respectively rightmost) edge of a graph pair diagram.
There was no parity map for $T_A$ essentially because the subgroup of $T_A$ that corresponds to $K \leq D_n$ is Thompson's group $T$ (see \cref{sub:Air}), which is simple.

\subsubsection{Computing the Abelianization}

Consider the following map:
\[ \Phi = \Pi \times \Delta \colon G_n \to \mathbb{Z}_2 \oplus \mathbb{Z}, \; g \mapsto \big( \Pi(g), \Delta(g) \big). \]
Clearly $\Phi$ is a group morphism, and it is not hard to see how it behaves on the generators $\{g_0, g_1, \tau_2, \dots, \tau_n \}$ of $G_n$, where each $\tau_i$ is the element of $K$ that corresponds to the permutation $(1 i)$ and $g_0$ and $g_1$ were represented in \cref{fig:H:generators} and generate $H$.
\begin{align*}
    \Phi(g_0) = (0,0), \; \Phi(g_1) = (0,-1),\\
    \Phi(\tau_i) = (1,0), \; \forall i = 2, \dots, n.
\end{align*}
The parity map on $g_0$ and $g_1$ is $0$ because both of these elements feature exactly two branch points with the same non-trivial permutation (for $g_0$ these are $\llbracket i \overline{n} \rrbracket$ and $\llbracket 1 1 i \overline{n} \rrbracket$ while the permutation is trivial at $\llbracket 1 i \overline{n} \rrbracket$, while for $g_1$ they are $\llbracket n i \overline{n} \rrbracket$ and $\llbracket n 1 1 i \overline{n} \rrbracket$ while the permutation is trivial at $\llbracket i \overline{n} \rrbracket$ and $\llbracket n i \overline{n} \rrbracket$).
In particular, from these computations we deduce that $\Phi$ is surjective, because for all $\zeta \in \{0,1\}$ and $z \in \mathbb{Z}$ one has $\Phi(\tau_2^\zeta g_1^z) = (\zeta, -z)$.
If we let $K = \mathrm{Ker}(\Phi)$, then $G_n / K \simeq \mathbb{Z}_2 \oplus \mathbb{Z}$, so $[G_n, G_n] \leq K$, so our goal is to show that $K \leq [G_n, G_n]$.

Using the following set of transversals for the quotient $G_n / K$
\[ \left\{ \tau_2^{\zeta} g_1^{-z} \mid \zeta \in \{0,1\}, z \in \mathbb{Z} \right\} \]
and the generating set $\{g_0, g_1, \tau_2 \dots, \tau_n \}$ for $G_n$ (\cref{thm:gen}), together with the application of the identities $\tau_2 g_1 = g_1 \tau_2$ and $\tau_i = \tau_i^{-1}$, an application of Schreier's Lemma (see, for example, \cite[Lemma 4.2.1]{MR1970241}) produces the following generating set for $K$:
\begin{equation}
\label{eq:commutator}
    \left\{
    g_1^{-z} \tau_2 \tau_i g_1^z, \;
    \tau_2^\zeta g_1^{-z} g_0 g_1^z \tau_2^\zeta \,
    \mid \, \zeta \in \{0,1\}, z \in \mathbb{Z}, i = 3, \dots, n
    \right\}.
\end{equation}

This generating set is included in the normal closure of $\{ \tau_2 \tau_i, g_0 \mid i = 3, \dots, n \}$ in $G_n$ and the commutator subgroup is normal in $G_n$, so it suffices to show that $[G_n, G_n]$ contains $\tau_2 \tau_3, \dots, \tau_2 \tau_n$ and $g_0$ in order to be able to conclude that $K \leq [G_n, G_n]$.
For each $i \in \{3, \dots, n\}$ the element $\tau_2 \tau_i$ is an even permutation, so it belongs to $[K, K] \leq [G_n, G_n]$.
As for $g_0$, using the fact that $[H,H] \simeq [F,F]$ is the subgroup of the elements of $H$ that act trivially around the endpoints of $C(1)$ and $C(n)$, a direct computation shows that
\begin{equation}
\label{eq:g0}
g_0 = h [\tau_n, g_1] \text{ for an element } h \in [H,H],
\end{equation}
which proves that $g_0 \in [G_n, G_n]$ and ultimately that $[G_n, G_n]$ is the kernel of $\Phi$.

To sum up the results shown so far about the commutator subgroup:

\begin{theorem}
\label{thm:commutator}
The commutator subgroup $[G_n, G_n]$ of a dendrite rearrangement group $G_n$ is the kernel of the surjective morphism $\Phi = \Pi \times \Delta: G_n \to \mathbb{Z}_2 \oplus \mathbb{Z}$.
In particular, the abelianization of $G_n$ is $\mathbb{Z}_2 \oplus \mathbb{Z}$.
Moreover, $[G_n, G_n]$ is generated by the (infinite) set exhibited above in \cref{eq:commutator}.
\end{theorem}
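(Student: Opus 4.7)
The plan is to follow the standard Schreier-style approach: define $\Phi = \Pi \times \Delta$, verify surjectivity, identify a convenient set of coset representatives, and then apply Schreier's Lemma to obtain a generating set for $\ker \Phi$, which we then show lies in $[G_n, G_n]$. The fact that $\Phi$ is a group morphism is immediate from the chain rules already established for the local parity maps $\pi_p$ and the local endpoint derivatives $\partial_q$, together with the observation that both $\pi_p(g)$ and $\partial_q(g)$ vanish whenever $p$ or $q$ does not appear as a vertex in a graph pair diagram for $g$ (so the global sums are in fact finite). Since $\Phi$ maps into an abelian group, automatically $[G_n, G_n] \leq \ker \Phi$.

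Next I would evaluate $\Phi$ on the generating set $\{g_0, g_1, \tau_2, \dots, \tau_n\}$ of $G_n$ provided by \cref{thm:gen}. A short direct inspection of the pictures in \cref{fig:H:generators} yields the values already displayed before the theorem: $\Phi(g_0) = (0,0)$, $\Phi(g_1) = (0,-1)$, and $\Phi(\tau_i) = (1,0)$ for each $i$. In particular $\Phi(\tau_2^\zeta g_1^{-z}) = (\zeta, z)$, so $\Phi$ is surjective, which immediately gives the abelianization statement once the reverse inclusion $\ker \Phi \leq [G_n, G_n]$ is established. The parity computations require care at each branch point appearing in the diagram: for $g_0$ one checks that the two branch points with non-trivial $\mathrm{Sym}(n)$-permutations contribute odd permutations whose parities cancel, and similarly for $g_1$.

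With $\{\tau_2^\zeta g_1^{-z} \mid \zeta \in \{0,1\}, z \in \mathbb{Z}\}$ as Schreier transversals for $G_n/\ker\Phi$, I would now apply Schreier's Lemma. The relations $\tau_2 g_1 = g_1 \tau_2$ (which holds because $\tau_2$ acts only on the cells $C(1)$ and $C(2)$, whereas $g_1$ acts nontrivially only on cells within $C(n)$) and $\tau_i^2 = 1$ collapse the Schreier generators to the family displayed in equation \eqref{eq:commutator}. Since $[G_n, G_n]$ is normal in $G_n$, it suffices to show that the ``core'' elements $\tau_2 \tau_i$ (for $i = 3, \dots, n$) and $g_0$ lie in $[G_n, G_n]$, and then conjugation absorbs the rest.

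The main obstacle is the last step, writing $g_0$ as a product of commutators. For $\tau_2 \tau_i$ this is free: each is an even permutation in the copy $K \cong \mathrm{Sym}(n)$, and for $n \geq 3$ we have $[\mathrm{Sym}(n), \mathrm{Sym}(n)] = \mathrm{Alt}(n)$, so $\tau_2 \tau_i \in [K,K] \leq [G_n, G_n]$. For $g_0$ itself I would exhibit the identity $g_0 = h[\tau_n, g_1]$ of equation \eqref{eq:g0}, where $h \in [H,H]$: computing $[\tau_n, g_1] = \tau_n g_1 \tau_n g_1^{-1}$ on each cell shows it differs from $g_0$ only by an element acting trivially near both endpoints of $A_0$, and the standard characterization $[F,F] = \{f \in F \mid f \text{ is trivial near } 0 \text{ and } 1\}$ transported to $H$ via \cref{lem:thomp} certifies that the difference lies in $[H,H] \leq [G_n, G_n]$. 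Combining these, all Schreier generators belong to $[G_n, G_n]$, giving the reverse inclusion and completing the proof.
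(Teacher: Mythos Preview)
Your proposal is correct and follows essentially the same route as the paper: both define $\Phi$, compute it on the generators $g_0, g_1, \tau_i$ to obtain surjectivity, use the transversals $\{\tau_2^\zeta g_1^{-z}\}$ and Schreier's Lemma (simplified via $\tau_2 g_1 = g_1 \tau_2$ and $\tau_i^2 = 1$) to produce the generating set \eqref{eq:commutator}, and then reduce to showing $\tau_2\tau_i \in [K,K]$ and $g_0 = h[\tau_n,g_1]$ with $h \in [H,H]$. The paper's argument is exactly this, presented across Subsections~6.1.1--6.1.3 with the theorem stated as a summary.
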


\begin{remark}
\label{rmk:comm:gen}
Using the identities $\tau_i g_1 = g_1 \tau_i$ for $i = 2, \dots, n-1$ and $[\tau_n, g_1^z] = [\tau_n, g_1]^z$ (which can both be verified with quick computations), the part $g_1^{-z} \tau_2 \tau_i g_1^z$ of the generating set in \cref{eq:commutator} can be replaced by the finite subset
\[ \{ \tau_2 \tau_i, [\tau_n, g_1] \mid i = 3, \dots, n \} \]
This produces the following generating set for $[G_n, G_n]$, which will be useful at the end of the proof of \cref{thm:comm:simple}:
\[ \{ \tau_2 \tau_i, \; [\tau_n, g_1], \; \tau_2^\zeta g_1^{-z} g_0 g_1^z \tau_2^\zeta \, \mid \, \zeta \in \{0,1\}, z \in \mathbb{Z}, i = 3, \dots, n \}. \]
This infinite generating set will be refined to a finite generating set in \cref{thm:comm:fg} at the end of this Section.
\end{remark}

\subsection{Simplicity of the Commutator Subgroup}

An analogue of \cref{prop:trans} also applies to the commutator subgroup $[G_n, G_n]$.
In order to prove this, we need the two following notions.
If a group $G$ acts on a space $X$, the \textbf{support} of an element $g \in G$ is the set of points that are not fixed by $g$.
If $U \subseteq X$ then the \textbf{rigid stabilizer} of $U$ in $G$ is the subgroup of those elements whose support is included in $U$, i.e., the elements that act trivially outside of $U$.
In symbols, this is
\[ \boldsymbol{\mathrm{Rist}_G(U)} = \{ g \in G \mid g(x) = x, \, \forall x \in X \setminus U \} \leq G. \]
The notion of rigid stabilizers will also be essential in \cref{thm:comm:simple}.

\begin{proposition}
\label{prop:comm:trans}
Given two finite subsets $\mathcal{F}_1$ and $\mathcal{F}_2$ of $\mathrm{Br}$, any graph isomorphism between $T(\mathcal{F}_1)$ and $T(\mathcal{F}_2)$ can be extended to an element of $[G_n,G_n]$.
In particular, $[G_n, G_n]$ acts transitively on $\mathrm{Br}(D_n)$.
\end{proposition}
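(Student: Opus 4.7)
The plan is to upgrade the element from \cref{prop:trans} to an element of $[G_n,G_n]$ by correcting its image under $\Phi$. By \cref{thm:commutator}, the commutator subgroup coincides with $\ker \Phi$ where $\Phi = \Pi \times \Delta \colon G_n \to \mathbb{Z}_2 \oplus \mathbb{Z}$, so it suffices to exhibit an extension of the graph isomorphism that lies in this kernel. First I would apply \cref{prop:trans} to obtain any element $g \in G_n$ extending the given graph isomorphism $f \colon T(\mathcal{F}_1) \to T(\mathcal{F}_2)$, and compute $(\zeta,z) = \Phi(g) \in \mathbb{Z}_2 \oplus \mathbb{Z}$. Letting $V_2$ denote the (finite) set of vertices of $T(\mathcal{F}_2)$, I would then aim to produce an auxiliary element $h \in G_n$ that fixes $V_2$ pointwise and satisfies $\Phi(h) = (\zeta,z)$; the corrected element $h^{-1}g$ still extends $f$ (since $h$ acts as the identity on every vertex of $T(\mathcal{F}_2)$) and now lies in $\ker \Phi = [G_n,G_n]$.

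The element $h$ is constructed inside a rigid stabilizer. Since $V_2$ is finite and cellular interiors form a basis of arbitrarily small open sets, I can choose an edge $x$ of some expansion of $\mathcal{D}_n$ such that $V_2 \cap \mathrm{int}(C(x)) = \emptyset$, additionally arranging that $x$ does not end with the letter $n$ (to avoid boundary effects in the endpoint derivative; see below). Any element whose support lies in $C(x)$ then fixes $V_2$ pointwise, because it is trivially the identity off $\mathrm{int}(C(x))$ and fixes the two boundary points of $C(x)$ by continuity. The canonical homeomorphism $C(x) \to D_n$ furnishes an injective homomorphism $G_n \hookrightarrow \mathrm{Rist}_{G_n}(C(x))$ sending each generator of $G_n$ to a ``scaled'' copy supported inside $C(x)$. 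Writing $\tau_2^{(x)}$ and $g_1^{(x)}$ for the scaled copies of $\tau_2$ and $g_1$, a direct computation using the definitions of \cref{sub:parity} shows that $\Phi(\tau_2^{(x)}) = (1,0)$ and $\Phi(g_1^{(x)}) = (0,-1)$, exactly as for the originals. Setting $h = (\tau_2^{(x)})^{\zeta}(g_1^{(x)})^{-z}$ then produces the required correction.

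The transitivity statement follows by specialising to $\mathcal{F}_1 = \{p\}$ and $\mathcal{F}_2 = \{q\}$: both trees are single vertices, and the unique graph isomorphism between them lifts, by the above, to an element of $[G_n,G_n]$ sending $p$ to $q$.

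The main obstacle I anticipate is the delicate bookkeeping of the endpoint derivative under the canonical embedding $G_n \hookrightarrow \mathrm{Rist}_{G_n}(C(x))$: the ``exclude the first letter when the word is all $n$'s'' convention in the definition of $\partial_q$ means that $\Delta$ is not quite preserved verbatim by this embedding for every element. Choosing the address $x$ to avoid ending in $n$ (in particular, $x$ is not itself of the form $n\cdots n$) sidesteps every relevant edge case and makes the computation of $\Phi(\tau_2^{(x)})$ and $\Phi(g_1^{(x)})$ reduce to exactly the one carried out for $\tau_2$ and $g_1$ in \cref{sub:abel}; with this choice in hand, the rest of the argument is routine.
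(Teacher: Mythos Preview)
Your strategy is the same as the paper's: take the element $g\in G_n$ furnished by \cref{prop:trans}, then multiply by a correction supported away from the vertices of $T(\mathcal{F}_2)$ so as to kill $\Phi(g)$. The paper works in the rigid stabilizer of a branch $B^*\subseteq D_n\setminus[\mathcal{F}_2]$ and simply asserts that $\Phi$ is surjective there; you work in $\mathrm{Rist}_{G_n}(C(x))$ and try to exhibit this surjectivity explicitly via scaled copies of generators.

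There is one genuine slip in your explicit construction. The canonical homeomorphism $C(x)\to D_n$ does \emph{not} give an embedding of all of $G_n$ into $\mathrm{Rist}_{G_n}(C(x))$: conjugating and then extending by the identity outside $C(x)$ only produces a well-defined homeomorphism of $D_n$ when the original element fixes the two endpoints $\llbracket 1\bar n\rrbracket$ and $\llbracket \bar n\rrbracket$, since those correspond to the boundary points of $C(x)$. Your $g_1$ does fix both, so $g_1^{(x)}$ is fine and your computation $\Phi(g_1^{(x)})=(0,-1)$ goes through. But $\tau_2=(1\,2)\in K$ sends $\llbracket 1\bar n\rrbracket$ to $\llbracket 2\bar n\rrbracket$, so the putative $\tau_2^{(x)}$ would move a boundary point of $C(x)$ and hence cannot be extended by the identity to an element of $G_n$ at all. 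The repair is easy: replace $\tau_2$ by any odd permutation at a branch point in the interior of $C(x)$ whose support avoids both boundary points of $C(x)$ (for instance a transposition of two branches at the central branch point of a sufficiently deep sub-cell). One then checks directly that such an element has $\Phi=(1,0)$, and the rest of your argument proceeds unchanged.
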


\begin{proof}
There is no need to repeat the same argument that was used in \cref{prop:trans}.
Instead, one can reason in the following manner.
Because of the aforementioned \cref{prop:trans}, given $\mathcal{F}_1, \mathcal{F}_2$ and a graph isomorphism $f \colon T(\mathcal{F}_1) \to T(\mathcal{F}_2)$ as in the statement, there exists a $g \in G_n$ that extends $f$.
Now, suppose that $\Phi(g) = (\zeta, z) \in \mathbb{Z}_2 \oplus \mathbb{Z}$.
Since $[G_n, G_n]$ is none other than the kernel of $\Phi$ (\cref{thm:commutator}), our goal is to produce an ``adjustment'' of the element $g$ that nullifies $\Phi$ and is the same as $g$ when restricted to $\mathcal{F}_1$.

Since the minimal subdendrite $[\mathcal{F}_2]$ of $D_n$ that contains $\mathcal{F}_2$ cannot be the entire $D_n$, there exists a branch $B^*$ that is fully contained in the complement of $[\mathcal{F}_2]$.
It is not hard to see that the restriction of $\Phi$ on the rigid stabilizer of any branch of $D_n$ is surjective, so in particular one can find an element $h \in \mathrm{Rist}_{G_n}(B^*)$ such that $\Phi(h) = (\zeta, -z)$.
It follows that the element $h g$ belongs to the commutator subgroup of $G_n$ and is the same as $g$ when restricted to $\mathcal{F}_1$, meaning that it extends the graph isomorphism $f: T(\mathcal{F}_1) \to T(\mathcal{F}_2)$ as desired.
\end{proof}

Using the exact same argument of \cref{clm:dns}, this Proposition allows to immediately prove that the commutator subgroup $[G_n, G_n]$ is also dense in the full group of homeomorphisms of $D_n$.

\begin{corollary}
\label{cor:comm:dns}
The commutator subgroup of the dendrite rearrangement group of $D_n$ is dense in the full group of homeomorphisms of $D_n$.
\end{corollary}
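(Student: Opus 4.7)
The plan is to mimic, essentially verbatim, the argument used in \cref{clm:dns} but replacing the appeal to \cref{prop:trans} with an appeal to the stronger statement \cref{prop:comm:trans}. The groundwork laid in \cref{sub:dns} makes the density question a purely combinatorial one: by \cite[Proposition 2.4]{DM19}, the uniform-convergence topology on $\mathbb{H}_n$ agrees with the pointwise-convergence topology on $\mathrm{Sym}(\mathrm{Br})$ induced by restriction, so a subgroup $\Sigma \leq \mathbb{H}_n$ is dense if and only if, for every $\phi \in \mathbb{H}_n$, every $k \geq 1$, and every tuple $p_1,\dots,p_k \in \mathrm{Br}$, there is some $g \in \Sigma$ with $g(p_i) = \phi(p_i)$ for all $i$.

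Given such $\phi$ and $p_1,\dots,p_k$, I would set $\mathcal{F}_1 = \{p_1,\dots,p_k\}$ and $\mathcal{F}_2 = \{\phi(p_1),\dots,\phi(p_k)\} \subseteq \mathrm{Br}$ (the latter inclusion holds because an arbitrary homeomorphism of $D_n$ must permute branch points, being characterized topologically by their order). Form the minimal subdendrites $[\mathcal{F}_1]$ and $[\mathcal{F}_2]$ and their associated minimal trees $T(\mathcal{F}_1)$, $T(\mathcal{F}_2)$ as on \cpageref{txt:trees}. The restriction $\phi|_{[\mathcal{F}_1]} \colon [\mathcal{F}_1] \to [\mathcal{F}_2]$ is a homeomorphism of subdendrites sending $p_i$ to $\phi(p_i)$, and since a homeomorphism between these subdendrites preserves branch points and the combinatorial structure of the minimal trees, it induces a graph isomorphism $f \colon T(\mathcal{F}_1) \to T(\mathcal{F}_2)$ sending $p_i \mapsto \phi(p_i)$.

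Now the point: by \cref{prop:comm:trans}, this graph isomorphism $f$ extends to a rearrangement $g \in [G_n, G_n]$, and by construction $g(p_i) = \phi(p_i)$ for every $i = 1, \dots, k$. This is precisely the pointwise approximation needed for density, and we conclude that $[G_n, G_n]$ is dense in $\mathbb{H}_n$.

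There is essentially no obstacle here; the real work is hidden in \cref{prop:comm:trans}, whose proof already carried out the nontrivial step of correcting an extension obtained from \cref{prop:trans} by an element of a rigid stabilizer supported in a branch disjoint from $[\mathcal{F}_2]$, so as to kill the abelianization map $\Phi$ without disturbing the action on $\mathcal{F}_1$. The only thing worth being careful about in writing up the corollary is not to re-explain those topological reductions, but simply to note that the proof of \cref{clm:dns} goes through word for word once \cref{prop:trans} is replaced by \cref{prop:comm:trans}.
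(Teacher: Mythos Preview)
Your proposal is correct and matches the paper's approach exactly: the paper simply remarks that the argument of \cref{clm:dns} goes through verbatim once \cref{prop:trans} is replaced by \cref{prop:comm:trans}, which is precisely what you do. There is nothing to add.
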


As another consequence of \cref{prop:comm:trans}, we have:

\begin{corollary}
\label{cor:comm:trans}
$[G_n, G_n]$ acts transitively on the set of branches of $D_n$.
\end{corollary}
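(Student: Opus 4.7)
My plan is to deduce the corollary directly from \cref{prop:comm:trans} by identifying each branch with a pair of branch points: the branch point it hangs off, together with a witness branch point sitting in its interior.

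First, I would observe that a branch $B$ at a branch point $p$ is an open subset of $D_n$: indeed $D_n \setminus \{p\}$ is open and $D_n$ is locally connected (\cref{lem:loc:conn}), so its connected components are open. Because $\mathrm{Br}$ is dense in $D_n$ (\cref{rmk:br:en:dns}), this means $B \cap \mathrm{Br}$ is nonempty. Hence, given two branches $B_1$ and $B_2$ at branch points $p_1$ and $p_2$ respectively, I can choose $q_i \in B_i \cap \mathrm{Br}$ with $q_i \neq p_i$.

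Next, let $\mathcal{F}_i = \{p_i, q_i\}$. The minimal subdendrite containing $\mathcal{F}_i$ is just the arc $[p_i, q_i]$, and so $T(\mathcal{F}_i)$ is a single edge joining the two vertices $p_i$ and $q_i$. Thus there is an evident graph isomorphism $T(\mathcal{F}_1) \to T(\mathcal{F}_2)$ defined by $p_1 \mapsto p_2$, $q_1 \mapsto q_2$. Applying \cref{prop:comm:trans}, this extends to an element $g \in [G_n, G_n]$.

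Finally, since $g$ is a homeomorphism of $D_n$ with $g(p_1) = p_2$, it induces a bijection between the connected components of $D_n \setminus \{p_1\}$ and those of $D_n \setminus \{p_2\}$. The branch $g(B_1)$ at $p_2$ contains $g(q_1) = q_2$, and the unique branch at $p_2$ containing $q_2$ is $B_2$; therefore $g(B_1) = B_2$, proving transitivity. There is no real obstacle here: all of the work was done in \cref{prop:comm:trans}, and this corollary amounts to choosing a well-adapted pair $\mathcal{F}_i$ that encodes the branch data.
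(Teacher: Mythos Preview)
Your argument is correct. It differs from the paper's proof in an instructive way. The paper first uses \cref{prop:comm:trans} only in the one-point case to move each $p_i$ to the central branch point $p_0$, and then finishes by invoking the alternating subgroup $[K,K] \leq [G_n,G_n]$, which acts transitively on the $n$ branches at $p_0$; the desired element is obtained as a composite $h_2 \sigma h_1$. You instead exploit \cref{prop:comm:trans} in the two-point case directly, encoding a branch by the pair $(p_i,q_i)$ and reading off $g(B_1)=B_2$ from $g(q_1)=q_2$. Your route is shorter and makes no separate appeal to the structure of $K$; the paper's route is more hands-on and exhibits the element explicitly in terms of previously introduced subgroups. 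One small redundancy: since $B_i$ is a component of $D_n \setminus \{p_i\}$, any $q_i \in B_i \cap \mathrm{Br}$ is automatically distinct from $p_i$, so the clause $q_i \neq p_i$ need not be stated.
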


\begin{proof}
Consider two branches $B_1$ and $B_2$.
If $p_1$ and $p_2$ are the branch points at which they branch out, respectively, then by \cref{prop:comm:trans} there exist elements $h_1,h_2 \in [G_n, G_n]$ such that $h_1(p_1)=p_0$ and $h_2(p_0)=p_2$.
Since both $h_1(B_1)$ and $h_2^{-1}(B_2)$ are branches at $p_0$, there exists some element $\sigma$ of the alternating subgroup $[K, K] \leq [G_n, G_n]$ (which is transitive on the set of branches at $p_0$, see \cref{lem:perm}) such that $h_2^{-1} \sigma h_1 (B_1) = B_2$.
Thus, the element $h_2^{-1} \sigma h_1$ maps $B_1$ to $B_2$ and belongs to $[G_n, G_n]$, as needed.
\end{proof}

This last Corollary allows to prove the simplicity of $[G_n, G_n]$ for all $n \geq 4$.

\begin{theorem}
\label{thm:comm:simple}
For each $n\geq4$, the commutator subgroup of the dendrite rearrangement group $G_n$ is simple.
\end{theorem}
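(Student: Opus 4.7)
The plan is to show that any non-trivial normal subgroup $N \trianglelefteq [G_n,G_n]$ equals $[G_n,G_n]$. Fix a non-identity $f \in N$; since $f$ is a non-identity homeomorphism of $D_n$ and the cells of $\mathcal{D}_n$ form a basis of the topology, there exists a cell $C$ with $f(C) \cap C = \emptyset$.

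The core localization step is a double-commutator trick. For any $g, h \in \mathrm{Rist}_{[G_n,G_n]}(C)$ the element $[f,g]=fgf^{-1}g^{-1}$ restricts to $g^{-1}$ on $C$ (because $f^{-1}(C) \cap C = \emptyset$ forces $fgf^{-1}$ to act trivially on $C$) and to $fgf^{-1}$ on $f(C)$, while being the identity elsewhere. Since $h$ is supported in $C$ and $fgf^{-1}$ is supported in the disjoint set $f(C)$, the iterated commutator $[[f,g],h]$ is supported in $C$ and coincides there with $[g^{-1},h]$; by normality of $N$, this forces $[g^{-1},h] \in N$. Letting $g$ and $h$ range over the rigid stabilizer yields
\[ [\mathrm{Rist}_{[G_n,G_n]}(C),\,\mathrm{Rist}_{[G_n,G_n]}(C)] \subseteq N. \]

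The next step is to prove that $\mathrm{Rist}_{[G_n,G_n]}(C)$ is perfect, so that the inclusion above upgrades to $\mathrm{Rist}_{[G_n,G_n]}(C) \subseteq N$. This rigid stabilizer contains natural rescaled copies of $[H,H]$ (from \cref{lem:thomps} applied to arcs inside $C$) and of $[K,K]=\mathrm{Alt}(n)$ (from \cref{lem:perm} applied to interior branch points of $C$), and an adaptation of the refined generating set of \cref{rmk:comm:gen} to the sub-dendrite inside $C$ provides an explicit generating set. The plan is to exhibit each such generator as a commutator within $\mathrm{Rist}_{[G_n,G_n]}(C)$, mixing Thompson-type and permutation-type factors. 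This is where the hypothesis $n \geq 4$ becomes indispensable: for $n = 3$ the alternating group $\mathrm{Alt}(3) \cong \mathbb{Z}/3$ is abelian, so the permutation-type contributions to the commutator subgroup collapse, and the construction that realizes the branch-permutation generators as commutators breaks down.

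Finally, the transitivity of $[G_n,G_n]$ on the set of branches of $D_n$ (\cref{cor:comm:trans}) lets us conjugate $C$ into every branch, so that by normality $N$ contains the rigid stabilizer of some cell in every branch. A direct check using the generators of \cref{rmk:comm:gen} and the chain rules for $\Pi$ and $\Delta$ shows that these rigid stabilizers together generate $[G_n,G_n]$, whence $N = [G_n,G_n]$. The main obstacle throughout is the middle step: establishing perfectness of $\mathrm{Rist}_{[G_n,G_n]}(C)$, which is both the bulk of the group-theoretic work and the precise place where the hypothesis $n \geq 4$ is required.
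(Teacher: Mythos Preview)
Your overall architecture matches the paper's: the Epstein double-commutator trick to get
\[
[h_1,h_2]\in N\quad\text{for all }h_1,h_2\in\mathrm{Rist}_{[G_n,G_n]}(B),
\]
then transitivity (\cref{cor:comm:trans}) to obtain this for every branch, then recovering the generating set of \cref{rmk:comm:gen}. The difference is that you insert an intermediate structural claim --- perfectness of $\mathrm{Rist}_{[G_n,G_n]}(C)$ --- whereas the paper skips this and works directly: after transitivity gives \cref{eq:comm:simple} for \emph{every} branch $B^*$, it writes each global generator $\tau_2\tau_i$, $[\tau_n,g_1]$, and the conjugates of $g_0$ explicitly as commutators (or products thereof) of elements in some $\mathrm{Rist}_{[G_n,G_n]}(B^*)$. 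Your perfectness step would require essentially the same commutator manipulations, just localized inside $C$, so it is redundant; the paper's route is shorter and avoids having to identify a generating set for the rigid stabilizer itself.

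One correction on the role of $n\ge 4$: the obstruction is not that $\mathrm{Alt}(3)$ is abelian. The $3$-cycle $(ijk)$ is still $[(ik),(jk)]$ regardless of $n$; the problem is that $(ik)$ and $(jk)$ are odd, so to land in $[G_n,G_n]$ one composes each with a parity-correcting odd permutation $h$ supported in $C^\mathrm{o}(k)$. For $n\ge 4$ the resulting elements $h_1,h_2$ leave at least one branch at $p_0$ untouched, and it is \emph{this} free branch that allows $h_1,h_2$ to sit in a common $\mathrm{Rist}_{[G_n,G_n]}(B^*)$ so that \cref{eq:comm:simple} applies. For $n=3$ the supports of $h_1,h_2$ meet all three branches at $p_0$, and the argument stalls there --- a geometric obstruction, not an algebraic one about $\mathrm{Alt}(3)$.
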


\begin{proof}
This proof shares its structure with the proofs of the simplicity of the commutator subgroups of the Basilica and Airplane limit spaces (from \cite{Belk_2015} and \cite{Airplane}, respectively), which in turn were inspired by the so-called \textit{Epstein's double commutator trick} from \cite{Epstein}.

Given a non-trivial normal subgroup $N$ of $[G_n, G_n]$, we wish to prove that $N = [G_n, G_n]$.
Let $g \in N \setminus \{1\}$.
Since $g$ is non-trivial, there must be an open subset $U$ of $D_n$ such that $U \cap g(U) = \emptyset$.
Every open subset of $D_n$ contains a cell (see \cite[Lemma 2.2]{IG}) and every cell contains a branch;
thus, there exists a branch $B$ based at some branch point $p$ that is included in $U$, and so $B \cap g(B) = \emptyset$.

Let $h_1, h_2 \in \mathrm{Rist}_{[G_n, G_n]}(B)$.
Note that $g^{-1} h_1 g \in \mathrm{Rist}_{[G_n, G_n]} \big( g^{-1}(B) \big)$, so
\[ [h_1, g] = h_1^{-1} g^{-1} h_1 g \in \mathrm{Rist}_{[G_n, G_n]} \big( B \cup g^{-1}(B) \big) \]
and $[h_1, g] |_B = h_1^{-1} |_B$.
Thus, since $h_2 \in \mathrm{Rist}_{[G_n, G_n]}(B)$, direct computations of $\big[ [h_1, g], h_2 \big]$ restricted on $B$, $g^{-1}(B)$ and elsewhere show that globally
\[ \big[ [h_1, g]^{-1}, h_2 \big] =[h_1,g] h_2^{-1} [h_1,g]^{-1} h_2 = h_1^{-1} h_2^{-1} h_1 h_2 = [h_1, h_2]. \]
Now, since $[h_1, g] \in N$ (because $g \in N \trianglelefteq [G_n, G_n] \ni h_1$) and $h_2 \in [G_n, G_n]$, the double commutator of the previous identity belongs to $N$, and so we have that
\[ [h_1, h_2] \in N \text{ for all } h_1, h_2 \in \mathrm{Rist}_{[G_n, G_n]}(B). \]

We do not have much control over the choice of $B$, but this can be overcome with the aid of \cref{cor:comm:trans}:
for any branch $B^*$, there exists some $h \in [G_n, G_n]$ such that $h(B^*) = B$, and conjugating the previous identity by $h$ yields
\begin{equation}
\label{eq:comm:simple}
[h_1, h_2] \in N \text{ for all } h_1, h_2 \in \mathrm{Rist}_{[G_n, G_n]}(B^*).
\end{equation}
As is shown below, this identity allows us to show that $N$ contains the generating set for $[G_n, G_n]$ exhibited in \cref{rmk:comm:gen}, which will conclude this proof.

Let us first show that $\tau_2 \tau_i \in N$ for all $i = 3, \dots, n$ because $N$ contains the entire subgroup $[K, K]$.
Each $3$-cycle $(i j k)$ can be written as $[(ik),(jk)]$.
Since $(ik)$ and $(jk)$ do not belong to $[G_n,G_n]$, we need to replace them by some $h_1 = (ik)h$ and $h_2=(jk)h$ in $\mathrm{Rist}_{[G_n,G_n]}(B^*)$ (for any branch $B^*$) such that $(ijk) = [(ik), (jk)] = [h_1,h_2]$, in order to apply \cref{eq:comm:simple}.
Using the fact that $n\geq4$, such an $h$ can be found, for example, as follows.
Let $p' = \llbracket kl1\overline{n} \rrbracket$ (for any $l=1,\dots,n$), which is the central branch point of the cell $C(k)$.
Let $h$ be the permutation at $p'$ that only switches the cells $C(k (n-1))$ and $C(k n)$, which is an odd permutation whose support is included in $C^\mathrm{o}(k)$.
Then $h_1 = (ik)h$ and $h_2 = (jk)h$ are compositions of odd permutations, thus $h_1, h_2 \in [G_n,G_n]$.
Since $n\geq4$, both $h_1$ and $h_2$ act trivially on some branch $\tilde{B}$ at $p_0$ that is different from $i, j$ and $k$.
(This last sentence does not work when $n=3$, see \cref{rmk:3:comm:simple,qst:3:comm:simple} below.)
If we fix any branch point $p^*$ of $\tilde{B}$, the elements $h_1$ and $h_2$ belong to $\mathrm{Rist}_{[G_n,G_n]}(B^*)$, where $B^*$ is the unique branch at $p^*$ that includes $p_0$.
Thus, one concludes by \cref{eq:comm:simple}.

To show that $[\tau_n, g_1]$ belongs to $N$, we need to rewrite it as $[\tau_n \rho, g_1 f]$ for suitable elements $\rho$ and $f$ so that we can apply \cref{eq:comm:simple}.
To find $\rho$ and $f$ such that $\rho\tau_n$ and $g_1f$ belong to $[G_n,G_n]$, we can employ a strategy similar to that used in \cref{prop:comm:trans}.
For example, let $\rho$ be any odd permutation with support included in $C(212)$ (for instance, the element that switches $C(2122)$ and $C(2123)$) and let $f$ be the element that acts on $C(2n)$ as $g_1$ doe on $C(n)$.
Then $\tau_n\rho$ and $g_1f$ belong to $\mathrm{Ker}(\Phi)=[G_n,G_n]$.
Moreover, the supports of $\rho$ and $f$ are disjoint and each of them has trivial intersection with the supports of $\tau_n$ and $g_1$, so $[\tau_n, g_1] = [\tau_n \rho, g_1 f]$.
Since the union $U$ of the supports of $\rho$, $f$, $\tau_n$ and $g_1$ is a proper closed subset of $D_n$, it is included in some branch $B$.
It follows that both $\tau_n \rho$ and $g_1 f$ belong to $\mathrm{Rist}_{[G_n, G_n]}(B)$, so we can conclude that $[\tau_n, g_1] = [\tau_n \rho, g_1 f] \in N$ by \cref{eq:comm:simple}.

Now that we know that $N$ contains $[\tau_n, g_1]$, because of \cref{eq:g0} it suffices to prove that $[H,H] \leq N$ in order to show that $g_0 \in N$ (recall that $H$ is the Thompson subgroup described in \cref{sub:thomp}).
This is done by recalling that the commutator subgroup of Thompson's group $F$ is the subgroup of $F$ that acts trivially around the endpoints of the unit interval, and that it is simple and non-abelian, so the commutator of $[H,H]$ is $[H,H]$ itself.
Then an arbitrary element of $[H,H]$ can be written as the product of commutators of elements of $[H,H]$, each of which must be acting trivially on some neighborhood of the endpoints of $C(1)$ and $C(n)$.
Then these elements act trivially in some small enough branch $B$ and an application of \cref{eq:comm:simple} allows us to conclude that $[H,H] \leq N$, and so that $g_0 \in N$.

Finally, \cref{eq:g0} tells us that $g_0 = h [\tau_n, g_1] = [f_1, f_2] [\tau_n, g_1]$ for some $f_1, f_2 \in [H,H]$ (using again the fact that the commutator subgroup of $[H,H]$ is $[H,H]$ itself).
With this identity, the remaining elements of the generating set from \cref{rmk:comm:gen} all belong to $N$ because they are
\[ \tau_2^\zeta g_1^{-z} g_0 g_1^z \tau_2^\zeta = g_0^{g_1^z\tau_2^{\zeta}} = \big[ f_1^{g_1^z\tau_2^{\zeta}}, f_2^{g_1^z\tau_2^{\zeta}} \big] \big[ \tau_n^{g_1^z\tau_2^{\zeta}}, g_1^{g_1^z\tau_2^{\zeta}} \big], \]
where each of the two commutators satisfy \cref{eq:comm:simple}.
\end{proof}

\begin{remark}
\label{rmk:3:comm:simple}
As noted in the proof of \cref{thm:comm:simple}, our argument to prove that $\tau_2 \tau_i$ belongs to $N$ for $i = 3, \dots, n$ fails for $n=3$.
The reason is simply that, with only $3$ branches at each branch point, the elements $h_i = (ij)h$ and $h_j = (jk)h$ (with $\{i, j, k\} = \{1, 2, 3\}$ and $h$ as described in the proof) do not belong to a common rigid stabilizer of any branch, so one cannot apply \cref{eq:comm:simple}.
Every other step of the proof works, so it would suffice to find a different way to show that $\tau_2 \tau_3 \in N$ in order to prove that $[G_3, G_3]$ is simple.
However, if the commutator subgroup is not simple, the rest of the proof of \cref{thm:comm:simple} imply that a non-trivial proper normal subgroup $N$ of $[G_3, G_3]$ must include the elements $[\tau_3, g_1]$ and $\tau_2^\zeta g_1^{-z} g_0 g_1^z \tau_2^\zeta$ along with the subgroup $[H, H]$, and so it can be argued that $N$ must also contain $[H_A, H_A]$ for every rational arc $A$, by the normality of $N$ and the transitivity properties of the commutator subgroup.
\end{remark}

This question is left unanswered:

\begin{question}
\label{qst:3:comm:simple}
Is $[G_3, G_3]$ simple?
\end{question}

\subsection{A Finite Generating Set for the Commutator Subgroup}

As a final result about the commutator subgroup of $G_n$, we construct a finite generating set.
The proof follows the same strategy of \cite[Theorem 7.12]{Airplane} and also makes use of the fact that the commutator subgroup of a Thompson subgroup $H_A$ is the set of elements of $H_A$ that act trivially on some neighborhood of each of the two endpoints of the arc $A$.

Let $q_0 = \llbracket \overline{n} \rrbracket$ and $A_\star = [p_0,q_0]$ (which is the ``right arc'' attached to $p_0$) and denote by $q_1$ the point $\llbracket n i 1 \overline{n} \rrbracket$ (which is the ``middle point'' of $A_\star$).
We start from the generating set from \cref{rmk:comm:gen}, and we will show that those elements all belong to the subgroup
\[ G_\star = \langle \tau_2 \tau_3, \dots, \tau_2 \tau_n, [\tau_n, g_1], g_0, g_0^{\tau_2}, g_\star \rangle, \]
where $g_\star$ is the element of $[H,H]$ that acts on the arc $[p_0,q_1]$ (the left half of the right arc $A_\star = [p_0, q]$) as $g_0$ does on the entire arc $A_0$.
By definition $G_\star$ contains $\tau_2 \tau_i$ for each $i = 1, \dots, n$ and $[\tau_n, g_1]$, so it suffices to show that the elements $\tau_2^\zeta g_1^{-z} g_0 g_1^z \tau_2^\zeta$ all belong to $G_\star$.

For $\zeta = 0$, observe that every element $g_1^{-z} g_0 g_1^z$ has the shape sketched in \cref{fig:comm:fg}, from which we deduce that it belongs to $[H,H] g_0$.
Now, we can prove that $[H,H] \leq G_\star$ with the following strategy:
if $h \in [H,H]$, then $h^{g_0^k}$ belongs to the commutator subgroup of $H_{A_\star}$ for a negative integer $k$ that is small enough.
But $[H_{A_\star}, H_{A_\star}]$ is included in the commutator subgroup of $\langle [\tau_n, g_1], g_\star \rangle$, as the two elements act on $A_\star$ as the two generators of $F$ do on $[0,1]$ (this step is discussed in more detail in \cite[Theorem 7.12]{Airplane}).
Since $[\tau_n,g_1], g_\star \in G_\star$, we have that $[H_{A_\star}, H_{A_\star}]$ is included in $G_\star$.
Thus, $[H,H] \leq G_\star$, so $g_1^{-z} g_0 g_1^z \in G_\star$ for all $z \in \mathbb{Z}$.

As for the case $\zeta = 1$, consider the arc $A_0' = [t,q]$, where $t = \llbracket 2 \overline{n} \rrbracket$ and, as before, $q = \llbracket ni1\overline{n} \rrbracket$.
One can prove that the elements $\tau_2 g_1^{-z} g_0 g_1^z \tau_2$ (similar to \cref{fig:comm:fg}, but replacing the left central branch with the top one) all belong to $G_\star$ essentially by replacing $g_0$ with $g_0^{\tau_2}$ (that acts on $A_0'$ as $g_0$ does on $A_0$) and $H$ with $H_{A_0'}$ in the previous paragraph.
This simply means switching the first and second branches at $p_0$, which preserves the arc $A_\star$ that we used as a pivot, so one ``pushes'' the action of $[H_{A_0'}, H_{A_0'}]$ inside $A_0$ by conjugating by negative powers of $g_0^{\tau_2}$ and obtains that $[H_{A_0'}, H_{A_0'}] \leq G_\star$.

Ultimately, this shows that $G_\star \geq [G_n, G_n]$.
The converse is clear, so we ultimately have the following.

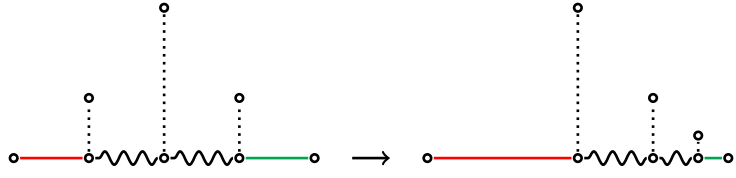
\begin{figure}
\centering
\centering
\begin{tikzpicture}
    \node[vertex] (c) at (0,0) {};
    \node[vertex] (l) at (-2,0) {};
        \node[vertex] (lc) at (-1,0) {};
        \node[vertex] (lt) at (-1,.8) {};
    \node[vertex] (t) at (0,2) {};
    \node[vertex] (r) at (2,0) {};
        \node[vertex] (rc) at (1,0) {};
        \node[vertex] (rt) at (1,.8) {};
    \draw[curvy] (rc) to (c);
    \draw[curvy] (c) to (lc);
    \draw[red] (lc) to (l);
    \draw[Green] (rc) to (r);
    \draw[dotted] (lc) to (lt);
    \draw[dotted] (rc) to (rt);
    \draw[dotted] (c) to (t);
    \draw[->] (2.5,0) to (3,0);
    \begin{scope}[xshift=5.5cm]
    \node[vertex] (c) at (0,0) {};
    \node[vertex] (l) at (-2,0) {};
    \node[vertex] (t) at (0,2) {};
    \node[vertex] (r) at (2,0) {};
            \node[vertex] (rrc) at (1.6,0) {};
            \node[vertex] (rrt) at (1.6,.3) {};
        \node[vertex] (rc) at (1,0) {};
        \node[vertex] (rt) at (1,.8) {};
    \draw[curvy] (rrc) to (rc);
    \draw[curvy] (rc) to (c);
    \draw[red] (c) to (l);
    \draw[Green] (rrc) to (r);
    \draw[dotted] (c) to (t);
    \draw[dotted] (rc) to (rt);
    \draw[dotted] (rrc) to (rrt);
    \end{scope}
\end{tikzpicture}
\caption{The general shape of the elements $g_1^{-z} g_0 g_1^z$, with an action on the curvy lines that depends on $z$.}
\label{fig:comm:fg}
\end{figure}

\begin{theorem}
\label{thm:comm:fg}
The commutator subgroup of a dendrite rearrangement group $G_n$ is finitely generated, for every $n \geq 3$.
\end{theorem}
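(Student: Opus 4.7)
My plan is to start from the infinite generating set for $[G_n, G_n]$ exhibited in \cref{rmk:comm:gen} and show that every element of it already lies in the subgroup
\[
G_\star = \langle \tau_2\tau_3, \dots, \tau_2\tau_n, [\tau_n,g_1], g_0, g_0^{\tau_2}, g_\star \rangle,
\]
where $g_\star \in [H,H]$ is chosen so that it acts on the half-arc $[p_0, q_1]$ of the right arc $A_\star = [p_0, q_0]$ (with $q_0 = \llbracket \bar n \rrbracket$ and $q_1 = \llbracket ni1\bar n \rrbracket$) exactly as $g_0$ acts on $A_0$. The elements $\tau_2\tau_i$ and $[\tau_n, g_1]$ are built into $G_\star$ by definition, so the real work is to absorb the two-parameter family $\tau_2^\zeta g_1^{-z} g_0 g_1^z \tau_2^\zeta$ with $\zeta \in \{0,1\}$ and $z \in \mathbb{Z}$.

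For $\zeta = 0$ I would first read off, from a picture analogous to \cref{fig:comm:fg}, that each conjugate $g_1^{-z} g_0 g_1^z$ differs from $g_0$ by a factor in $[H,H]$; so it suffices to prove $[H,H] \leq G_\star$. The key point is that $[H,H] \cong [F,F]$ consists of elements of $H$ that act trivially in some neighborhood of each endpoint of $A_0$, so conjugating any such element by a sufficiently negative power of $g_0$ pushes its support into a cell strictly inside $A_\star$. That conjugate then lies in $[H_{A_\star}, H_{A_\star}]$, where $H_{A_\star}$ is the Thompson copy from \cref{lem:thomps}; via the identification of $A_\star$ with $[0,1]$ in \cref{rmk:arcs:dyadic}, the two elements $[\tau_n, g_1]$ and $g_\star$ of $G_\star$ act as the standard generators $X_0, X_1$ of $F$, so $[H_{A_\star}, H_{A_\star}] \leq G_\star$ and therefore $[H,H] \leq G_\star$.

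For $\zeta = 1$ the same idea applies after swapping the first two branches at $p_0$: the element $g_0^{\tau_2}$ acts on $A_0' = \tau_2(A_0)$ as $g_0$ acts on $A_0$, so $\tau_2 g_1^{-z} g_0 g_1^z \tau_2$ differs from $g_0^{\tau_2}$ by an element of $[H_{A_0'}, H_{A_0'}]$. Because $\tau_2$ fixes the arc $A_\star$ pointwise, repeating the previous conjugation trick by negative powers of $g_0^{\tau_2}$ again lands one inside the Thompson-$F$ copy on $A_\star$, hence inside $G_\star$. Combined with $g_0^{\tau_2} \in G_\star$ by construction, this finishes the $\zeta = 1$ family and proves $G_\star \supseteq [G_n, G_n]$; the reverse inclusion is immediate since each generator of $G_\star$ lies in $\mathrm{Ker}(\Phi) = [G_n, G_n]$ by \cref{thm:commutator}.

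The main obstacle I anticipate is the bookkeeping that guarantees the conjugation actually lands in the \emph{commutator subgroup} of the Thompson copy on the pivot arc rather than merely in a rigid stabilizer: one needs both that the support of the conjugate lies strictly inside the pivot arc and that the restricted action matches the action of an element of $[H_{A_\star}, H_{A_\star}]$ under the canonical identification of \cref{rmk:arcs:dyadic}. Both reduce to the observation that elements of $[F,F]$ have support compactly contained in $(0,1)$, so a sufficient number of $g_0$-conjugations pushes everything into an arbitrarily small dyadic sub-interval. Once this is set up, the whole argument mirrors the strategy of \cite[Theorem 7.12]{Airplane}, which we can essentially invoke with only notational changes.
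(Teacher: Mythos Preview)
Your proposal is correct and follows essentially the same argument as the paper: the same finite set $G_\star$, the same reduction of the infinite family $\tau_2^\zeta g_1^{-z} g_0 g_1^z \tau_2^\zeta$ to showing $[H,H] \leq G_\star$ (respectively $[H_{A_0'},H_{A_0'}] \leq G_\star$), and the same pivot-arc trick of conjugating by negative powers of $g_0$ (respectively $g_0^{\tau_2}$) to land in $[H_{A_\star},H_{A_\star}] \leq \langle [\tau_n,g_1], g_\star \rangle$. Your identification $A_0' = \tau_2(A_0)$ and the anticipated bookkeeping about supports are exactly what the paper does, invoking \cite[Theorem 7.12]{Airplane} at the same point.
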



\section{Further Properties of Dendrite Rearrangement Groups}
\label{sec:properties}

This Section is a collection of additional results about dendrite rearrangement groups.
Here we provide information about their finite subgroups (\cref{sub:fin:subgps}), we prove that they are pairwise non-isomorphic (\cref{sub:distinct}), we show how they relate to the other Thompson groups (\cref{sub:relations:Thompson}), we see that their conjugacy problem is solvable (\cref{sub:conj}), and finally we prove that they are not invariably generated (\cref{sub:IG}).

\subsection{Finite Subgroups}
\label{sub:fin:subgps}

Generalising the ideas used in \cite[Proposition 2.10]{belk2016rearrangement} (which is about the Vicsek rearrangement group, see \cref{sub:Vic}), we obtain the following result.

\begin{proposition}
\label{prop:finite:subgroups}
Each finite subgroup of $G_n$ is a subgroup of the group of automorphisms of some finite $n$-regular tree, and all such subgroups for all finite $n$-regular trees are achieved.

In particular, each finite subgroup of $G_n$ has order
\[ p_0^{\varepsilon} p_1^{k_1} \dots p_m^{k_m}, \]
where $p_1, \dots, p_m$ are the prime divisors of $(n-1)!$, $k_i \in \mathbb{N}$ (possibly zero), $\varepsilon \in \{0,1\}$ and $p_0 = n$ if it is prime and it is $1$ otherwise.
Every such order is achieved by some abelian subgroup.
\end{proposition}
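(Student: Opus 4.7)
The plan is to prove the structural embedding first, then deduce the order formula, and finally to realize each admissible order by an abelian subgroup.

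For the structural part, I would show that every finite subgroup $F \leq G_n$ stabilizes some graph expansion $\Lambda$ of $\mathcal{D}_n$, meaning every $f \in F$ admits a graph pair diagram with both domain and range equal to $\Lambda$. To build $\Lambda$, pick minimal graph pair diagrams for a finite generating set of $F$, form the finite set $\mathcal{E}$ of all domain and range graphs that appear, enlarge it to its $F$-orbit $F \cdot \mathcal{E}$ (still finite because $F$ is finite), and take the minimal common refinement $\Lambda$ of $F \cdot \mathcal{E}$ in the join-semilattice of graph expansions of $\mathcal{D}_n$. Then $f(\Lambda)$ is the minimal common refinement of $f(F \cdot \mathcal{E}) = F \cdot \mathcal{E}$, so $f(\Lambda) = \Lambda$, meaning $F$ acts on $\Lambda$ by graph automorphisms. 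Since the base and replacement graphs of $\mathcal{D}_n$ are $n$-stars, every graph expansion of $\mathcal{D}_n$ is a finite tree whose internal vertices all have degree $n$, and the above gives an embedding $F \hookrightarrow \mathrm{Aut}(\Lambda)$. Conversely, any such finite tree arises as a graph expansion of $\mathcal{D}_n$ by iteratively expanding edges of the base graph, and any subgroup $G \leq \mathrm{Aut}(\Lambda)$ embeds into $G_n$ via the graph pair diagrams $(\Lambda, \Lambda, \phi)$ for $\phi \in G$.

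For the order formula, I would analyze $\mathrm{Aut}(\Lambda)$ for a finite $n$-regular tree $\Lambda$ by finding its center, which is uniquely either an internal vertex or an edge. Rooting $\Lambda$ at the center, $\mathrm{Aut}(\Lambda)$ decomposes into an iterated wreath-product structure: each internal vertex other than the center contributes at most $\mathrm{Sym}(n-1)$ permuting its $n-1$ children, while the center contributes at most $\mathrm{Sym}(n)$ (if the center is a vertex, permuting its $n$ neighbouring subtrees) or $\mathbb{Z}_2$ (if the center is an edge, flipping it). Hence $|\mathrm{Aut}(\Lambda)|$ divides $n! \cdot ((n-1)!)^k$ for some $k \in \mathbb{N}$, whose prime divisors are exactly the primes dividing $(n-1)!$ together with $n$ itself in the case $n$ prime (since then $n \nmid (n-1)!$ while $v_n(n!) = 1$). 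This yields the claimed form $p_0^{\varepsilon} p_1^{k_1} \cdots p_m^{k_m}$ with $\varepsilon \in \{0,1\}$.

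The last and most delicate step is realizing every such order by an abelian subgroup. I would proceed constructively: for each prime $p_i$ dividing $(n-1)!$, fix a cyclic subgroup of $\mathrm{Sym}(n-1)$ of order $p_i$ and, for each exponent $k_i$, attach $k_i$ ``rotation gadgets'' realizing this cyclic subgroup at internal vertices placed in pairwise disjoint subtrees, so the resulting rearrangements commute by disjointness of support. When $n$ is prime and $\varepsilon = 1$, additionally attach an $n$-fold cyclic rotation at a central vertex whose $n$ subtrees are pairwise isomorphic, arranged so that it commutes with the other rotations by respecting their placement. The main obstacle is ensuring the ambient tree has no unintended extra symmetries that would either inflate the group beyond the intended abelian subgroup or destroy commutativity; this requires choosing the ``filler'' parts of the tree asymmetrically enough to rigidify $\Lambda$ outside the designated rotational gadgets, which can always be arranged by making those parts sufficiently deep and generic.
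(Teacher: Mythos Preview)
Your first two steps match the paper's approach closely: the paper cites \cite[Theorem 2.9]{belk2016rearrangement} for the fact that every finite subgroup of a rearrangement group sits in the automorphism group of some graph expansion (your minimal-common-refinement argument is essentially a proof of that theorem), and then uses the same center-of-tree plus wreath-recursion analysis to bound the order of $\mathrm{Aut}(T)$.

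For the third step your approach diverges from the paper's, and yours is both more complicated and slightly off in its diagnosis of the difficulty. The paper realizes the abelian group directly: write the target order as a product of (not necessarily distinct) primes $q_1 \cdots q_k$ with $q_1 = n$ if that factor occurs, build a rooted tree whose root has $q_1$ children and whose level-$i$ vertices each have $q_{i+1}$ children, and let $C_{q_1} \times \cdots \times C_{q_k}$ act level-by-level via $(z_1,\dots,z_k)\cdot x_1\cdots x_j = (x_1+z_1)\cdots(x_j+z_j)$. This is transparently faithful and abelian, every vertex has degree at most $n$ (since $q_i \leq n-1$ for $i \geq 2$), and one then completes to an $n$-regular tree. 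No rigidification is needed because we only require an \emph{embedding} into $\mathrm{Aut}(\Lambda)$, not equality.

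Your worry about ``unintended extra symmetries'' is misplaced for exactly this reason: extra automorphisms of $\Lambda$ cannot spoil an embedding, nor the commutativity of elements with disjoint support. The actual soft spot in your sketch is the $\varepsilon = 1$ case. A central $n$-fold rotation $c$ permutes the $n$ subtrees cyclically, so a gadget $g$ sitting in just one subtree satisfies $c g c^{-1} \neq g$; to get something commuting with $c$ you must take the diagonal product $g_1 \cdots g_n$ over all $n$ subtrees. That fix works, but once you are forced into diagonal products across levels you are essentially rediscovering the paper's level-wise construction, and it is cleaner to set it up that way from the start.
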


\begin{proof}
First, let $G$ be a finite subgroup of $G_n$.
It is shown in \cite[Theorem 2.9]{belk2016rearrangement} that every finite subgroup of a rearrangement group is the subgroup of the automorphism group of some graph expansion of the replacement system, so $G \leq \mathrm{Aut}(T)$ for some finite $n$-regular tree $T$.
It is easy to see that every finite $n$-regular tree can be realized as a graph expansion of $\mathcal{D}_n$.

The decomposition of $\mathrm{Aut}(T)$ described in this paragraph is well-known in literature and is often called \textit{wreath recursion} (see for example \cite[Section 1.4]{NekSSG}).
It is known that every finite tree has a unique \textit{center},
which is either a vertex or an edge that is fixed by every automorphism (see for example \cite[Theorem 3.46]{GroupsGraphsTrees}).
When an edge $e$ is fixed, it is still possible that $\iota(e)$ and $\tau(e)$ are switched.
In any case, $\mathrm{Aut}(T)$ can be decomposed as a finite sequence of semidirect products (where the action is by permutations of the components of the direct product) in the following way:
\[ \mathrm{Aut}(T) = A_0 \ltimes (A_1 \times \dots \times A_n), \]
where $A_0 \leq \mathrm{Sym}(n)$ or $A_0 \leq \mathrm{Sym}(2)$ (depending on whether the center is a vertex or an edge) and each $A_i$ is
\[ A_i = A_{i,0} \ltimes (A_{i,1} \times \dots \times A_{i,n-1}), \]
where $A_{i,0} \leq \mathrm{Sym}(n-1)$
and each $A_{i,j}$ is built in this very same way, eventually ending with the trivial group.
In particular, this implies that the order of $\mathrm{Aut}(T)$ (and of each of its subgroups) is of the type exhibited in the statement.

Now, given a number of the form exhibited in the statement, we can write it as a product $q_1 \dots q_k$ of primes $q_i$'s that need not be pairwise distinct.
Without loss of generality, we order them so that $q_1 = n = p_0$ if it is prime and it is one of the factors.
Consider a rooted tree $\mathfrak{T}$ whose root has $q_1$ children whose $i$-th level vertices all have $q_{i+1}$ children (note in particular that each vertex has degree that is at most $n$, by the choice $q_1 = n$ if it is a prime factor).
Denote the root by the empty word and each $i$-th child of $w$ by $wi$.
The abelian group $C_{q_1} \times \dots \times C_{q_k}$ acts faithfully on this tree with the diagonal action defined as follows:
an element $(z_1, \dots, z_k)$ maps each vertex $x_1 \dots x_k$ to $(x_1 + q_1) \dots (x_k + q_k)$.
Since each vertex of $\mathfrak{T}$ has degree that is at most $n$, this action can be extended to a faithful action on the natural completion of $\mathfrak{T}$ to an $n$-regular tree.
In particular, this group embeds into the automorphism group of an $n$-regular tree, so we are done.
\end{proof}

\subsection{Pairwise Distinction}
\label{sub:distinct}

Using \cref{prop:finite:subgroups} from the previous Subsection, one can immediately distinguish two dendrite rearrangement groups $G_n$ and $G_m$ using the orders of their finite subgroups whenever $n$ or $m$ is at most $4$ (they can actually be distinguished in this way for all $n, m \leq 9$).
For $n \geq 5$, we can use the same argument that is mentioned in Subsection 2.3 of \cite{belk2016rearrangement} for distinguishing the Vicsek rearrangement groups.
For the sake of completeness, here we explain this argument in full detail for dendrite rearrangement groups.

\begin{proposition}
\label{prop:alt}
For every $n \geq 3$, the dendrite rearrangement group $G_n$ contains isomorphic copies of the alternating group $A_n$ but not of $A_{n+1}$.
\end{proposition}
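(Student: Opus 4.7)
My plan is to prove the two halves of the statement separately. The first inclusion, $A_n \leq G_n$, is immediate: by \cref{lem:K}, the subgroup $K \leq G_n$ is isomorphic to $\mathrm{Sym}(n)$ and thus contains an index-two copy of $A_n$.

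For the second half ($A_{n+1} \not\leq G_n$), I would use \cref{prop:finite:subgroups}, which reduces the task to showing that $A_{n+1}$ does not embed into $\mathrm{Aut}(T)$ for any finite $n$-regular tree $T$. The key observation is that $A_{n+1}$ is simple whenever $n + 1 \geq 5$, that is, for $n \geq 4$. Assuming this, I would argue by induction on the number of vertices of $T$, exploiting the fact that every automorphism of a finite tree fixes the unique center (a vertex or an edge). This yields a natural homomorphism $\mathrm{Aut}(T) \to \mathrm{Sym}(d)$, where $d \leq n$ is either the degree of the central vertex or equals $2$ when the center is an edge, with kernel sitting inside the direct product of the automorphism groups of the subtrees hanging off the center. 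Restricting to a putative $A_{n+1} \leq \mathrm{Aut}(T)$, simplicity forces the kernel to be either trivial or all of $A_{n+1}$. The trivial case is ruled out by $|A_{n+1}| = (n+1)!/2 > n! \geq |\mathrm{Sym}(d)|$, and the full-kernel case places $A_{n+1}$ inside a direct product of automorphism groups of strictly smaller subtrees. Because $A_{n+1}$ is simple and non-abelian, some coordinate projection must then be injective (if every projection had kernel $A_{n+1}$, the group would sit trivially in the product), embedding $A_{n+1}$ into one of these smaller automorphism groups and contradicting the induction hypothesis.

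The main obstacle I expect is the case $n = 3$. Here $A_{n+1} = A_4$ is not simple: its Klein four-group $V_4$ is a proper non-trivial normal subgroup, and the simplicity-based collapse of the induction fails at both of its crucial steps. Moreover, a direct check suggests that $A_4 \cong V_4 \rtimes C_3$ embeds into $\mathrm{Aut}(T) \cong \mathrm{Sym}(2) \wr \mathrm{Sym}(3)$ for the depth-two expansion $T$ of $\mathcal{D}_3$, via $V_4$ realized as pairs of leaf-swaps across two of the three branches and $C_3$ as the cyclic permutation of the branches. Settling the $n = 3$ case would therefore require a finer argument, going beyond the lattice of finite subgroups of $G_n$ supplied by \cref{prop:finite:subgroups}.
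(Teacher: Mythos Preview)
For $n \geq 4$ your argument is correct and essentially matches the paper's: both invoke \cref{prop:finite:subgroups} to reduce to faithful actions on finite $n$-regular trees, and both then exploit the simplicity of $A_{n+1}$ together with the center of a finite tree to force an embedding of $A_{n+1}$ into some $\mathrm{Sym}(d)$ with $d \leq n$. The paper organizes the cascade slightly differently---it looks at the first level of the rooted tree on which the action becomes nontrivial, rather than inducting on the number of vertices---but the substance is the same.

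Your concern about $n=3$ is well-founded, and your embedding of $A_4 \cong V_4 \rtimes C_3$ into $\mathrm{Sym}(2)\wr\mathrm{Sym}(3) \cong \mathrm{Aut}(T)$ is correct. But you have stopped one step short. The second clause of \cref{prop:finite:subgroups} says that \emph{every} subgroup of $\mathrm{Aut}(T)$ for \emph{every} finite $n$-regular tree $T$ is realized inside $G_n$. Your tree $T$ is the full one-step expansion of the base graph of $\mathcal{D}_3$, so your embedding already yields $A_4 \leq G_3$. There is thus no ``finer argument'' to be found: the statement as written is simply false for $n=3$. The paper's own proof shares this gap---its Claim is only stated for $m \geq 5$, hence only covers $n \geq 4$---so you have in fact spotted an oversight in the stated range of the proposition. (This does not affect \cref{thm:non:isomorphic}, since the paper handles small $n$ there separately via the orders of finite subgroups.)
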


\begin{proof}
Thanks to \cref{prop:finite:subgroups} we know what the finite subgroups of $G_n$ are, from which it is immediate to find copies of $A_n$.
More explicitly, it is clear that $A_n$ acts faithfully on the base graph of $\mathcal{D}_n$ itself as a subgroup of the permutation subgroup $K$ from \cref{sub:perm}.

If we suppose that $A_{n+1}$ embeds into $G_n$, then by \cref{prop:finite:subgroups} we must have that $A_{n+1}$ acts faithfully on a finite $n$-regular tree.
This is a contradiction because of the following:

\begin{claim}
For $m \geq 5$, if the alternating group $A_m$ acts faithfully on a tree $\mathcal{T}$ then there exists a vertex of $\mathcal{T}$ whose degree is at least $m$.
\end{claim}

To prove this Claim, assume that $A_m \simeq H \leq \mathrm{Aut}(\mathcal{T})$.
As noted in the previous \cref{sub:fin:subgps}, $H$ must fix a vertex or an edge of $\mathcal{T}$ (see for example \cite[Theorem 3.46]{GroupsGraphsTrees}).
Up to a barycentric subdivision of the fixed edge, we can assume that $H$ is fixing a vertex $r$, so we can think of $\mathcal{T}$ as if it were a rooted tree.
Then observe that, for each $k \in \mathbb{N}$, the subgroup consisting of the elements of $H$ that act trivially on the first $k$ levels of $\mathcal{T}$ is normal in $H$.
Since $H$ is simple, this implies that for every $k$ either $H$ acts trivially or faithfully on the first $k$ levels.
Thus, since $H$ is not trivial, there exists a $k$ such that the action of $H$ on the first $k-1$ levels is trivial whereas the action on level $k$ is faithful.
Note that then $H$ must embed into a direct product of symmetric groups, i.e.,
\[ H \leq \mathrm{Sym}(m_1) \times \dots \times \mathrm{Sym}(m_l), \]
where the $m_i$'s are the number of children of the $l$ vertices at level $k$.
We now prove that this cannot happen unless there exists a $j$ such that $m_j \geq m$.
This will conclude the proof, since each $m_i$ is either the degree of a vertex of $\mathcal{T}$ or the degree minus $1$, depending on whether $k=0$ or $k>0$.

Denote by $\mathrm{pr}_i$ the $i$-th projection in the aforementioned direct product.
Since $H$ is simple, each $\mathrm{pr}_i(H)$ is either trivial or isomorphic to $H \simeq A_m$.
These cannot all be trivial or $H$ would be trivial itself, so there is a $j$ such that $A_m \simeq \mathrm{pr}_j(H) \leq \mathrm{Sym}(m_j)$.
It is known that $\mathrm{Sym}(m_j)$ cannot contain a copy of $A_m$ unless $m \leq m_j$, so we are done.
\end{proof}

As an immediate consequence of \cref{prop:alt}, we have the following.

\begin{theorem}
\label{thm:non:isomorphic}
$G_n \simeq G_m$ if and only if $n = m$.
\end{theorem}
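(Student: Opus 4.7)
The reverse implication is trivial, so the content lies entirely in showing $G_n \simeq G_m \Rightarrow n = m$. My plan is to deduce this immediately from \cref{prop:alt}, which has already pinned down precisely the largest alternating group embedding into $G_n$.

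Concretely, suppose there is an isomorphism $\varphi \colon G_n \to G_m$, and without loss of generality assume $n \leq m$. By \cref{prop:alt}, $G_m$ contains an isomorphic copy of $A_m$, so via $\varphi^{-1}$ the group $G_n$ contains an isomorphic copy of $A_m$ as well. If we had $n < m$, then $n + 1 \leq m$, and so $A_m$ itself contains an isomorphic copy of $A_{n+1}$ (as the pointwise stabilizer in $A_m$ of any $m - n - 1$ chosen symbols). Composing these embeddings would produce a copy of $A_{n+1}$ inside $G_n$, contradicting the second half of \cref{prop:alt}. Hence $n = m$.

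There is no genuine obstacle: all the heavy lifting has already been done in \cref{prop:finite:subgroups} and \cref{prop:alt}, where the finite subgroups of $G_n$ were classified via their action on $n$-regular trees and the alternating-group invariant $n = \max\{k \mid A_k \hookrightarrow G_n\}$ was extracted. The final theorem is essentially the observation that this invariant distinguishes the groups $G_n$ pairwise. The only subtlety worth flagging in the write-up is the elementary but indispensable fact that $A_{n+1} \hookrightarrow A_m$ for $m \geq n+1$, which is what lets the single-integer invariant from \cref{prop:alt} separate \emph{all} pairs rather than only consecutive ones.
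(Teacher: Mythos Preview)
Your proposal is correct and matches the paper's approach exactly: the paper simply states that \cref{thm:non:isomorphic} is an immediate consequence of \cref{prop:alt}, and your argument is precisely the natural unpacking of that implication. The only thing you add beyond the paper's one-line deduction is the explicit observation that $A_{n+1} \hookrightarrow A_m$ for $m \geq n+1$, which is indeed the small step needed to go from the consecutive case to the general one.
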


\begin{remark}
\label{rmk:rubin}
Very often results of this kind can be proved as applications of theorems of reconstructions of topological spaces from groups acting on them (i.e., given a group $G$ acting ``nicely'' on two ``nice'' topological spaces $X$ and $Y$, one finds an equivariant homeomorphism between $X$ and $Y$).
The most prominent example of this is the celebrated paper \cite{Rubin}, which features 4 families of group actions that allow a reconstruction of the underlying space.
Of these, the result named \textit{Theorem 2} in the abstract of \cite{Rubin}, which is often just called \textbf{Rubin's Theorem} and is also discussed and proved more succinctly in \cite{ShortRubin}, applies to many Thompson groups and other ``rich'' subgroups of homeomorphism groups.

However, we remark here that neither Rubin's Theorem (nor any of the other 3 from \cite{Rubin}, to the author's knowledge) applies to $G_n$, nor to any group acting by homeomorphisms on a Wa\.zewski dendrite $D_n$.
Indeed, let $U$ be the open set $C^\mathrm{o}(11) \cup \dots \cup C^\mathrm{o}(n1) \cup \{p_0\}$ (which is essentially obtained by taking the inner ``halves'' of the $n$ branches at $p_0$).
Every element of $\mathrm{Rist}_{G_n}(U)$ must fix setwise each of the cells $C(11), \dots, C(n1)$, so it must also fix $p_0$, which prevents $\mathrm{Rist}_{G_n}(U)$ from satisfying the conditions of Rubin's Theorem.
\end{remark}

Before moving on to a comparison with other Thompson groups, we highlight the following fact.

\begin{proposition}
\label{prop:inclusion}
$G_n < G_{n+1}$, for all $n \geq 3$.
\end{proposition}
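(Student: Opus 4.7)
The plan is to build an injective homomorphism $\phi \colon G_n \hookrightarrow G_{n+1}$ and then to exhibit an element of $G_{n+1}$ lying outside its image. The embedding will come from a natural bijection between the expansions of $\mathcal{D}_n$ and a certain restricted class of expansions of $\mathcal{D}_{n+1}$. Define the letter-by-letter relabeling $\psi \colon \{1,\dots,n\}^{\ast} \to \{1,\dots,n+1\}^{\ast}$ by $\psi_1(i) = i$ for $i < n$ and $\psi_1(n) = n+1$, so that no word in the image of $\psi$ contains the letter $n$.

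Call a graph expansion of $\mathcal{D}_{n+1}$ \emph{restricted} if, every time the replacement rule is applied to an edge $e$, the subedge $e n$ is subsequently never expanded and survives as a frozen leaf (and we regard the $n$-th edge of the base graph of $\mathcal{D}_{n+1}$ as frozen in the same way). Restricted expansions are in natural bijection with expansions of $\mathcal{D}_n$: the non-frozen (live) edges of a restricted expansion are exactly the $\psi$-images of the edges of the associated $\mathcal{D}_n$-expansion, while each branch point carries precisely one attached frozen leaf. Given a graph pair diagram $(D,R,f)$ representing $g \in G_n$, we lift it to a graph pair diagram $(D',R',f')$ in $\mathcal{D}_{n+1}$ by letting $D'$ and $R'$ be the restricted expansions corresponding to $D$ and $R$, letting $f'$ act on live edges as the $\psi$-transport of $f$, and matching each frozen leaf of $D'$ with the unique frozen leaf of $R'$ attached to the corresponding image vertex. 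This defines $\phi(g) \in G_{n+1}$.

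To see that $\phi$ is well defined and multiplicative, one checks that expanding an edge of a $\mathcal{D}_n$-diagram lifts to expanding the corresponding live edge of the restricted $\mathcal{D}_{n+1}$-diagram while also creating one new frozen leaf, so that $\phi(g)$ is independent of the chosen diagram for $g$. Moreover the minimal common refinement of two restricted expansions is again restricted, because common refinement only subdivides edges that are subdivided in one of the two given expansions and hence never touches a frozen leaf; composing two restricted rearrangements therefore again admits a restricted diagram, giving $\phi(g_1 g_2) = \phi(g_1)\phi(g_2)$. Injectivity is immediate: $\phi(g)$ acts on the family of live cells exactly as $g$ acts on the cells of $D_n$ after relabeling by $\psi$, so $\phi(g) = 1$ forces $g = 1$.

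Finally, strictness of the inclusion can be witnessed by the transposition $\sigma$, sitting in the $\mathrm{Sym}(n+1)$-subgroup of $G_{n+1}$ supplied by \cref{lem:K}, that swaps the cells $C(n)$ and $C(n+1)$: this $\sigma$ does not belong to $\phi(G_n)$ because every element of $\phi(G_n)$ preserves the live/frozen distinction, whereas $\sigma$ exchanges the frozen cell $C(n)$ with the live cell $C(n+1)$. Alternatively one can combine \cref{prop:finite:subgroups} with the Claim inside the proof of \cref{prop:alt} to note that $G_{n+1}$ contains $A_{n+1}$ while $A_{n+1}$ does not embed into $G_n$. The main obstacle in executing this plan is proving multiplicativity of $\phi$, which reduces to checking that the restricted-expansion condition is preserved under common refinement and that the frozen-leaf matching is consistent across the different restricted diagrams representing the same rearrangement.
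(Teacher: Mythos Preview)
Your argument is correct and is essentially the same construction as the paper's: both embed $G_n$ into $G_{n+1}$ by designating one of the $n+1$ branch directions as ``extra'' and carrying it along rigidly. The paper simply names the image as the subgroup of $G_{n+1}$ generated by its Thompson subgroup $H$ together with a point-stabilizer $\mathrm{Sym}(n) \leq K \cong \mathrm{Sym}(n+1)$, whereas you build the homomorphism explicitly at the level of graph pair diagrams via the relabeling $\psi$ and the frozen-leaf bookkeeping; your care in freezing a \emph{side} edge (one attached to neither $\iota$ nor $\tau$) is precisely what makes the live part of a restricted expansion match the corresponding $\mathcal{D}_n$-expansion as a bipointed graph, and your strictness argument via \cref{prop:alt} is a clean structural alternative to exhibiting a specific element outside the image.
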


\begin{proof}
Simply consider the subgroup of $G_{n+1}$ generated by its Thompson subgroup $H$ along with the subgroup of $K$ that fixes the cell $C(n+1)$, which is the subgroup of those rearrangements that ``rigidly drag'' every $(n+1)$-th branch along with its branch point.
More precisely, each cell $C(x)$ corresponding to a word $x$ that ends with $n+1$ and does not feature $n+1$ anywhere else is mapped canonically to a cell $C(y)$ with this same property.
This is an isomorphic copy of $G_n$ inside $G_{n+1}$.
\end{proof}

\subsection{Relations with other Thompson Groups}
\label{sub:relations:Thompson}

As seen in \cref{sub:thomp}, dendrite rearrangement groups feature infinitely many isomorphic copies of Thompson's group $F$.
Here we show how the $G_n$'s compare to the bigger siblings of $F$.

\begin{proposition}
\label{prop:dendrite:Thompson:comparison}
For every $n \geq 3$, each dendrite rearrangement group $G_n$ embeds in Thompson's group $V$, but $G_n$ does not embed in Thompson's group $T$ nor does $T$ embed into $G_n$ (so, in particular, $V$ also does not embed into $G_n$).
\end{proposition}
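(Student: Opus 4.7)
The plan is to prove the three assertions separately, exploiting what we already know about $G_n$ together with classical properties of $T$ and $V$.

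For $G_n \hookrightarrow V$, the argument is the standard one sketched in \cite{RearrConj} for rearrangement groups in general. A rearrangement $g \in G_n$ is represented by a graph pair diagram $(D,R,f)$, and this diagram defines a prefix-substitution homeomorphism $\hat{g}$ of the symbol space $\Omega$: replace each prefix corresponding to an edge $e\in D$ by $f(e)\in R$. Since $\mathcal{D}_n$ is expanding, $\Omega$ is a Cantor space, and $\hat{g}$ lies in the group of prefix-substitution homeomorphisms of $\Omega$, which is isomorphic to $V$ after identifying $\Omega$ with $\{0,1\}^\infty$. The assignment $g \mapsto \hat{g}$ is well-defined (expanding a graph pair diagram gives the same $\hat g$), it is a homomorphism, and it is injective because $\hat g$ descends along the quotient map $\Omega \to D_n$ to the original rearrangement $g$.

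For $G_n \not\hookrightarrow T$, the key is to compare finite subgroups. A classical fact is that every finite subgroup of $T$ is cyclic: $T$ acts faithfully by orientation-preserving homeomorphisms on $S^1$, and every finite subgroup of $\mathrm{Homeo}^+(S^1)$ is topologically conjugate to a finite group of rotations, hence cyclic. On the other hand, \cref{lem:K} furnishes a subgroup $K \simeq \mathrm{Sym}(n)$ inside $G_n$, and $\mathrm{Sym}(n)$ is non-cyclic for every $n\geq 3$. This rules out any embedding $G_n \hookrightarrow T$.

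For $T \not\hookrightarrow G_n$, the strategy is to compare the sets of orders of torsion elements. It is classical (see e.g.\ Ghys--Sergiescu) that $T$ contains a cyclic subgroup of every finite order. On the other hand, by \cref{prop:finite:subgroups} every finite subgroup of $G_n$ has order of the form $p_0^\varepsilon p_1^{k_1}\cdots p_m^{k_m}$, where the $p_i$'s are the primes dividing $(n-1)!$ and $p_0\in\{1,n\}$. For every $n\geq 3$ there exists a prime $p>n$ (in particular $p$ does not divide $n\cdot(n-1)!$), and $T$ contains an element of order $p$ while $G_n$ cannot. This gives $T\not\hookrightarrow G_n$, and since $T$ embeds in $V$, it immediately follows that $V$ also does not embed in $G_n$.

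The main technical point is the first part: writing down the embedding into $V$ cleanly requires a careful verification that the assignment $g\mapsto\hat g$ is independent of the choice of graph pair diagram (compatibility with expansion) and injective (which follows from the fact that the map on $\Omega$ descends to the original map on $D_n$). The remaining two claims are essentially immediate consequences of results already established in the paper combined with standard facts about Thompson's group $T$; the conceptual content of those two steps is simply to notice which invariants ($\mathrm{Sym}(n)$ as a subgroup, and torsion orders) sharply distinguish $G_n$ from $T$ in opposite directions.
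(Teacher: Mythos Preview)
Your argument is essentially the paper's own proof, with the same three-part structure and the same invariants (finite subgroups of $T$ are cyclic; $G_n$ contains $\mathrm{Sym}(n)$; the torsion orders in $G_n$ are constrained by \cref{prop:finite:subgroups}). Parts two and three are correct as written.

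There is one imprecision in the first part. The symbol space $\Omega$ of $\mathcal{D}_n$ carries an $n$-ary symbolic structure (the alphabet has $n$ letters), so the group of prefix-substitution homeomorphisms of $\Omega$ is the Higman--Thompson group $V_n$, not $V$. Your sentence ``which is isomorphic to $V$ after identifying $\Omega$ with $\{0,1\}^\infty$'' conflates a topological identification with a symbolic one: a mere homeomorphism $\Omega \cong \{0,1\}^\infty$ does not send $n$-ary prefix substitutions to binary prefix substitutions, and in fact $V_n$ and $V$ are in general non-isomorphic. The paper handles this correctly by first landing in $V_n$ (via the ``ungluing'' of edges, which is exactly your $g\mapsto \hat g$) and then invoking Higman's theorem \cite[Theorem 7.2]{higman1974finitely} that $V_n$ embeds in $V$. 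Once you insert this intermediate step, your argument is complete and matches the paper.
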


\begin{proof}
The embedding of $G_n$ into $V$ is also described, more broadly, in Remark 1.8 from \cite{RearrConj}, and it essentially consists of ``forgetting'' about edge adjacency and thus allowing every permutation of the edges.
To achieve this, one can simply consider a graph pair diagram for $G_n$ and ``unglue'' all of the edges, considering domain and range graphs consisting of edges each of which is not adjacent to any other.
The result is a valid graph pair diagram for the Higman-Thompson group $V_n$, since every permutation of the edges is allowed in $V_n$.
This prompts a natural embedding of $G_n$ inside $V_n$, which in turn embeds in $V$ by \cite[Theorem 7.2]{higman1974finitely}.

As for Thompson's group $T$, it is known that its finite subgroups are cyclic and that every finite cyclic group embeds in $T$.
Then \cref{prop:finite:subgroups} implies that $G_n$ cannot embed into $T$ (as clearly, for every $n \geq 3$, there are automorphism groups of finite $n$-regular trees that are not cyclic) and also that $T$ cannot embed into $G_n$ (otherwise $G_n$ would have to feature cyclic subgroups of every order, but it does not by the aforementioned Proposition).
\end{proof}

Since both the rearrangement groups of the Basilica and of the Airplane limit spaces (respectively $T_B$ from \cite{Belk_2015} and $T_A$ from \cite{belk2016rearrangement} and \cite{Airplane}) contain copies of $T$ and embed in $T$, we can also conclude the following.

\begin{corollary}
For every $n \geq 3$, $T_B$ and $T_A$ do not embed into $G_n$ nor does $G_n$ into $T_B$ or $T_A$. 
\end{corollary}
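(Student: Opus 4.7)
The plan is to derive this corollary as a direct consequence of \cref{prop:dendrite:Thompson:comparison} together with the two embedding facts recalled in the sentence immediately preceding the statement, namely that both $T_B$ and $T_A$ contain an isomorphic copy of Thompson's group $T$ and that both $T_B$ and $T_A$ themselves embed into $T$. No further input is needed: the argument is just a transitivity-of-embeddings sandwich.

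First, I would handle the direction ``$T_B$ and $T_A$ do not embed into $G_n$''. Suppose for contradiction that there were an embedding $T_B \hookrightarrow G_n$ (respectively $T_A \hookrightarrow G_n$). Composing with the embedding $T \hookrightarrow T_B$ (respectively $T \hookrightarrow T_A$) recalled above, we would obtain an embedding $T \hookrightarrow G_n$, contradicting \cref{prop:dendrite:Thompson:comparison}.

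Next, for the reverse direction ``$G_n$ does not embed into $T_B$ or $T_A$'', I would argue symmetrically: an embedding $G_n \hookrightarrow T_B$ (respectively $G_n \hookrightarrow T_A$), composed with $T_B \hookrightarrow T$ (respectively $T_A \hookrightarrow T$), would yield an embedding $G_n \hookrightarrow T$, again contradicting \cref{prop:dendrite:Thompson:comparison}.

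There is no genuine obstacle here; the only thing to be careful about is to cite the correct auxiliary embedding in each of the four cases, and to make sure the two facts about $T_B$ and $T_A$ (containing $T$ and embedding into $T$) are properly attributed to the references \cite{Belk_2015}, \cite{belk2016rearrangement}, and \cite{Airplane} as done in the paragraph preceding the corollary. The whole proof should fit in a few lines.
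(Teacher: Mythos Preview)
Your proposal is correct and matches the paper's approach exactly: the corollary has no separate proof in the paper, and the sentence preceding it gives precisely the transitivity-of-embeddings argument you outline, using that $T_B$ and $T_A$ both contain and embed into $T$ together with \cref{prop:dendrite:Thompson:comparison}.
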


It appears that the groups $T_B$ and $T_A$, along with Thompson's group $T$, have more in common with the orientation-preserving dendrite rearrangement groups mentioned in \cref{sub:orientation} below.
Instead, dendrite rearrangement groups $G_n$ do not preserve an orientation of the limit space $D_n$ on which they act, which is arguably what separates these groups from $T_B$ and $T_A$.

The only other rearrangement group mentioned in literature that is likely to contain a copy of or embed into some dendrite rearrangement group is the Vicsek rearrangement group, along with its generalizations (see \cite[Example 2.1]{belk2016rearrangement}).
See \cref{sub:Vic} below for more about this comparison.

\subsection{A Solution to the Conjugacy Problem}
\label{sub:conj}

We say that the \textbf{conjugacy problem} of a group $G$ is solvable if there exists an algorithm that, given two elements of $G$, infallibly decides in finite time whether those elements belong to the same conjugacy class.

The main result of \cite{RearrConj} states that, given an expanding replacement system whose replacement rules
are reduction-confluent, the conjugacy problem
is solvable in the associated rearrangement group.
The solution described in \cite{RearrConj} was inspired by a solution to the conjugacy problem for Thompson groups $F$, $T$ and $V$ from \cite{Belk2007ConjugacyAD} and makes use of \textit{strand diagrams}.
The following paragraphs succinctly explain what reduction-confluent means and why dendrite replacement systems satisfy such property.

A \textit{graph reductions} is the opposite operation of graph expansions of the replacement system.
More precisely, given a graph $\Gamma$, let $\Lambda$ be a subgraph isomorphic to a replacement graph $R_c$.
Let $\phi$ be such an isomorphism and suppose that the image of every vertex of $R_c$ other than its initial and terminal vertices is not incident on vertices of $\Gamma \setminus \phi(R_c)$ (i.e., only the initial and terminal vertices are allowed to be interact with the rest of $\Gamma$).
Then we replace $\phi(R_c)$ with an edge colored by $c$ with initial vertex $\phi(\iota)$ and terminal vertex $\phi(\tau)$.
This defines a graph rewriting system, whose objects are finite graphs (up to isomorphisms) and whose operation of reduction we denote by $\longrightarrow$.
When edges are undirected (\cref{rmk:undirected}), we do not distinguish between their orientations for the purposes of determining graph isomorphisms and graph reductions.

A rewriting system is \textbf{confluent} if, whenever $A \dashrightarrow B_1$ and $A \dashrightarrow B_2$ are finite sequences of rewritings, there exist an object $C$ and a finite (possibly empty) sequences of rewritings $B_1 \dashrightarrow C$ and $B_2 \dashrightarrow C$ (i.e., when rewriting the same object $A$ in two different ways, one can always continue rewriting to a common object $C$).
We say that a replacement system is \textbf{reduction-confluent} if the the rewriting system of its graph reductions is confluent.

Let us see why dendrite replacement systems $\mathcal{D}_n$ have confluent graph reductions.
Since graph reductions decrease the number of edges, they are terminating (i.e., there is no infinite sequence of graph reductions).
Then, by Newman's diamond Lemma \cite{NewmanDiamond}, it suffices to show that they are \textbf{locally confluent}, i.e., whenever a graph $\Gamma$ and two distinct reductions $\Gamma \longrightarrow \Gamma_1$ and $\Gamma \longrightarrow \Gamma_2$, there exist a graph $\Gamma^*$ and a finite (possibly empty) sequences of reductions $\Gamma_1 \dashrightarrow \Gamma^*$ and $\Gamma_2 \dashrightarrow \Gamma^*$.
In the case of dendrites, a graph reduction is uniquely identified by a vertex of degree $n$ and its $n$ incident edges.
Assume first that the two reductions $\Gamma \longrightarrow \Gamma_1$ and $\Gamma \longrightarrow \Gamma_2$ involve no common edges, such as those portrayed in \cref{fig:parallel:reductions} (edges are drawn without orientation because they are undirected, see \cref{rmk:undirected}).
Then such reductions can be performed one after the other, whatever the order, meaning that $\Gamma \longrightarrow \Gamma_1 \longrightarrow \Gamma^*$ and $\Gamma \longrightarrow \Gamma_2 \longrightarrow \Gamma^*$, as needed.
Assume instead that the graphs involved in the two reductions $\Gamma \longrightarrow \Gamma_1$ and $\Gamma \longrightarrow \Gamma_2$ share some edge.
Then they can only share one edge, otherwise they would be the same reduction.
The subgraph involved in both reductions is thus the one represented in \cref{fig:reductions}, from which it is clear that $\Gamma_1$ and $\Gamma_2$ are isomorphic, so in this case we have $\Gamma \longrightarrow \Gamma_1 \simeq \Gamma^*$ and $\Gamma \longrightarrow \Gamma_2 \simeq\Gamma^*$.
Ultimately, dendrite replacement systems are reduction-confluent, so their conjugacy problem is solvable by \cite{RearrConj}.

\begin{figure}
\centering
\begin{tikzpicture}[scale=.9]
    \node at (0,2.2) {$\Gamma$};
    \node[vertex] (C) at (0,0) {};
    \node[vertex] (L) at (-1.5,0) {};
    \node[vertex] (T) at (0,1.5) {};
    \node[vertex] (R) at (1.5,0) {};
    \node[vertex] (B) at (0,-1.5) {};
    \node[vertex] (LL) at (-3,0) {};
    \node[vertex] (LT) at (-1.5,1.5) {};
    \node[vertex] (LB) at (-1.5,-1.5) {};
    \node[vertex] (RR) at (3,0) {};
    \node[vertex] (RT) at (1.5,1.5) {};
    \node[vertex] (RB) at (1.5,-1.5) {};
    \draw (C) to (L);
    \draw (C) to (T);
    \draw (C) to (R);
    \draw (C) to (B);
    \draw (L) to (LL);
    \draw (L) to (LT);
    \draw (L) to (LB);
    \draw (R) to (RR);
    \draw (R) to (RT);
    \draw (R) to (RB);
    \draw[dotted,magenta] (L) circle (1.65);
    \draw[dashed,magenta,-to] (-3.5,0) to[out=180,in=90] (-5.25,-4);
    \draw[dotted,green] (R) circle (1.65);
    \draw[dashed,green,-to] (3.5,0) to[out=0,in=90] (5.25,-4);
    \begin{scope}[xshift=-3.75cm,yshift=-4.75cm]
    \node at (0,2.2) {$\Gamma_1$};
    \node[vertex] (LC) at (-.75,0) {};
    \node[vertex] (LL) at (-2.25,0) {};
    \node[vertex] (LT) at (-.75,1.5) {};
    \node[vertex] (LB) at (-.75,-1.5) {};
    \node[vertex] (RC) at (.75,0) {};
    \node[vertex] (RR) at (2.25,0) {};
    \node[vertex] (RT) at (.75,1.5) {};
    \node[vertex] (RB) at (.75,-1.5) {};
    \draw[magenta] (LC) to (LL);
    \draw (LC) to (LT);
    \draw (LC) to (LB);
    \draw (LC) to (RC);
    \draw (RC) to (RR);
    \draw (RC) to (RT);
    \draw (RC) to (RB);
    \end{scope}
    \begin{scope}[xshift=3.75cm,yshift=-4.75cm]
    \node at (0,2.2) {$\Gamma_2$};
    \node[vertex] (LC) at (-.75,0) {};
    \node[vertex] (LL) at (-2.25,0) {};
    \node[vertex] (LT) at (-.75,1.5) {};
    \node[vertex] (LB) at (-.75,-1.5) {};
    \node[vertex] (RC) at (.75,0) {};
    \node[vertex] (RR) at (2.25,0) {};
    \node[vertex] (RT) at (.75,1.5) {};
    \node[vertex] (RB) at (.75,-1.5) {};
    \draw (LC) to (LL);
    \draw (LC) to (LT);
    \draw (LC) to (LB);
    \draw (LC) to (RC);
    \draw[green] (RC) to (RR);
    \draw (RC) to (RT);
    \draw (RC) to (RB);
    \end{scope}
\end{tikzpicture}
\caption{Two reductions of graphs that share no edge, for the replacement system $\mathcal{D}_4$.}
\label{fig:parallel:reductions}
\end{figure}

\begin{figure}
\centering
\begin{tikzpicture}[scale=.9]
    \node at (0,2.2) {$\Gamma$};
    \node[vertex] (LC) at (-.75,0) {};
    \node[vertex] (LL) at (-2.25,0) {};
    \node[vertex] (LT) at (-.75,1.5) {};
    \node[vertex] (LB) at (-.75,-1.5) {};
    \node[vertex] (RC) at (.75,0) {};
    \node[vertex] (RR) at (2.25,0) {};
    \node[vertex] (RT) at (.75,1.5) {};
    \node[vertex] (RB) at (.75,-1.5) {};
    \draw (LC) to (LL);
    \draw (LC) to (LT);
    \draw (LC) to (LB);
    \draw (LC) to (RC);
    \draw (RC) to (RR);
    \draw (RC) to (RT);
    \draw (RC) to (RB);
    \draw[dotted,magenta] (LC) circle (1.65);
    \draw[dashed,magenta,-to] (-2.75,0) to[out=180,in=90] (-4.25,-3.5);
    \draw[dotted,green] (RC) circle (1.65);
    \draw[dashed,green,-to] (2.75,0) to[out=0,in=90] (4.25,-3.5);
    \begin{scope}[xshift=-3.25cm,yshift=-4.75cm]
    \node at (0,2.2) {$\Gamma_1$};
    \node[vertex] (C) at (0,0) {};
    \node[vertex] (L) at (-1.5,0) {};
    \node[vertex] (T) at (0,1.5) {};
    \node[vertex] (R) at (1.5,0) {};
    \node[vertex] (B) at (0,-1.5) {};
    \draw[magenta] (C) to (L);
    \draw (C) to (T);
    \draw (C) to (R);
    \draw (C) to (B);
    \draw[dotted,magenta] (-.75,0) circle (.9);
    \end{scope}
    \node at (0,-4.75) {\LARGE$\simeq$};
    \begin{scope}[xshift=3.25cm,yshift=-4.75cm]
    \node at (0,2.2) {$\Gamma_2$};
    \node[vertex] (C) at (0,0) {};
    \node[vertex] (L) at (-1.5,0) {};
    \node[vertex] (T) at (0,1.5) {};
    \node[vertex] (R) at (1.5,0) {};
    \node[vertex] (B) at (0,-1.5) {};
    \draw (C) to (L);
    \draw (C) to (T);
    \draw[green] (C) to (R);
    \draw (C) to (B);
    \draw[dotted,green] (.75,0) circle (.9);
    \end{scope}
\end{tikzpicture}
\caption{Two reductions of graphs that share an edge, for the replacement system $\mathcal{D}_4$.}
\label{fig:reductions}
\end{figure}

\begin{proposition}
Dendrite rearrangement groups have solvable conjugacy problem.
\end{proposition}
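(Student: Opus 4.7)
The plan is to invoke the general result from \cite{RearrConj} stating that if an expanding replacement system is reduction-confluent, then its rearrangement group has solvable conjugacy problem. Since we have already recalled (at the page referenced by \cpageref{txt:rep:sys:den}) that $\mathcal{D}_n$ is expanding, the entire task reduces to verifying reduction-confluence of the graph rewriting system attached to $\mathcal{D}_n$.

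To establish reduction-confluence, I would apply Newman's diamond lemma \cite{NewmanDiamond}: since each graph reduction strictly decreases the number of edges, the rewriting system is terminating, so it suffices to show local confluence. Given a graph $\Gamma$ and two reductions $\Gamma \longrightarrow \Gamma_1$, $\Gamma \longrightarrow \Gamma_2$, each reduction is uniquely determined by a degree-$n$ vertex together with its $n$ incident edges (i.e., by a subgraph isomorphic to the replacement graph $R$ of $\mathcal{D}_n$). There are then exactly two cases to treat, already illustrated respectively in \cref{fig:parallel:reductions} and \cref{fig:reductions}:

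\textbf{Disjoint case:} If the two reducible subgraphs share no edge, the reductions commute. Performing the other reduction on $\Gamma_i$ yields a common graph $\Gamma^*$, so $\Gamma_1 \longrightarrow \Gamma^*$ and $\Gamma_2 \longrightarrow \Gamma^*$. \textbf{Overlapping case:} If the two reducible subgraphs share at least one edge, they must share exactly one edge (two distinct reductions cannot share two edges, since the degree-$n$ vertex plus all its incident edges fully determine the reduction). The local picture of the overlap is then exactly the one drawn in \cref{fig:reductions}, and inspection shows $\Gamma_1 \simeq \Gamma_2$, so we may take $\Gamma^* = \Gamma_1 = \Gamma_2$ with empty continuation sequences. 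The essentially routine (and already half-done) part of the argument is this local confluence check; there is no serious obstacle, since dendrite replacement systems are so combinatorially simple that the overlap case collapses immediately.

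Once local confluence is established, Newman's lemma gives confluence of the reduction system, so $\mathcal{D}_n$ is reduction-confluent. Combined with expandingness, the main theorem of \cite{RearrConj} then applies and yields a (uniform in $n$) algorithm solving the conjugacy problem in $G_n$, completing the proof.
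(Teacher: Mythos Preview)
Your proposal is correct and follows essentially the same argument as the paper: invoke the main theorem of \cite{RearrConj}, reduce to checking reduction-confluence of $\mathcal{D}_n$, and verify local confluence via Newman's lemma by splitting into the disjoint and single-edge-overlap cases exactly as in \cref{fig:parallel:reductions,fig:reductions}. The paper's treatment is identical in structure and detail.
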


We note here that conjugacy in the (topological) group $\mathrm{AAut}(\mathcal{T}_{d,k})$ of almost automorphism of trees has been studied recently in \cite{Goffer2019ConjugacyAD}.
That paper shows that whether two hyperbolic elements of $\mathrm{AAut}(\mathcal{T}_{d,k})$ are conjugate can be established by studying elements of the Higman-Thompson group $V_{d,k}$ that ``approximate'' the two almost automorphisms.
Since we have seen that dendrite rearrangement groups ``approximate'' the full groups of homeomorphisms (\cref{thm:dense}), inspired by the use that \cite{Goffer2019ConjugacyAD} made of the strand diagrams developed in \cite{Belk2007ConjugacyAD}, it is natural to ask the following.

\begin{question}
Can the technology of strand diagrams for rearrangement groups developed in \cite{RearrConj} be used to study conjugacy in $\mathrm{Homeo}(D_n)$?
\end{question}

\subsection{Absence of Invariable Generation}
\label{sub:IG}

We say that a group $G$ is \textbf{invariably generated} if it admits a generating set $S$ that still generates $G$ even after one has conjugated each of the elements of $S$ by elements of $G$.
More precisely:
\[ G = \langle s^{g_s} \mid s \in S \rangle \text{ for every choice of } \{g_s \mid s \in S \} \subseteq G. \]
It was proved in \cite{Gelander2016InvariableGO} that Thompson's group $F$ is invariably generated, whereas the bigger siblings $T$ and $V$ are not.

A rearrangement group is \textbf{weakly cell-transitive} if, given an arbitrary cell $C$ and an arbitrary proper union $K$ of finitely many cells, there exists a rearrangement that maps $K$ inside $C$.
It was shown in \cite[Proposition 3.3]{IG} that this property of a rearrangement group is equivalent to other transitivity properties that can be found in literature, namely \textit{CO-transitivty} and \textit{flexibility}.

The main result of \cite{IG} is that, if a rearrangement group is weakly cell-transitive, then it is not invariably generated, nor is its commutator subgroup.

\begin{proposition}
\label{prop:wct}
Dendrite rearrangement groups are weakly cell-transitive.
\end{proposition}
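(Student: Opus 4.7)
The plan is to construct an explicit graph pair diagram $(D,R,f)$ realizing a rearrangement $g\in G_n$ with $g(K)\subseteq C$. First, by subdividing cells of $K$ if necessary, we choose an expansion $D$ of the base graph of $\mathcal{D}_n$ whose associated cellular partition has $K$ as a union of some set $E_K$ of $m$ edges. Since $K$ is a proper union, $D$ has at least one edge outside $E_K$. Recall that every cell of $\mathcal{D}_n$ is either of type BB (both endpoint-vertices are branch points of $D_n$) or BE (one branch point and one rational endpoint), and we record the BB/BE-profile of $E_K$.

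Writing $C=C(y)$, we construct the range expansion $R$ as follows: start from the minimal expansion of the base graph containing $y$ as an edge, then further expand $y$ and its descendants so that the edges of $R$ lying below $y$ (i.e., those whose labels begin with $y$) are at least $m$ in number and contain enough BB and BE edges to match the profile of $E_K$. As noted earlier in the discussion of edge types, each expansion anywhere in an expansion graph increases its BB count by $1$ and its BE count by $n-2$, so both counts can be made arbitrarily large, making a matching profile always achievable. We then further refine $D$ outside $E_K$ and, if necessary, $R$ outside $C$ until the two expansion graphs become isomorphic as abstract trees and admit a graph isomorphism $f\colon D\to R$ sending $E_K$ bijectively onto a chosen subset of edges of $R$ inside $C$. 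The triple $(D,R,f)$ is then a valid graph pair diagram, and the induced rearrangement $g\in G_n$ maps each cell of $E_K$ canonically onto a sub-cell of $C$, so that $g(K)\subseteq C$.

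The main obstacle is arranging for $D$ and $R$ to be graph-isomorphic while also respecting the constraint that $f(E_K)$ lies inside $C$. This relies on the combinatorial flexibility of $\mathcal{D}_n$: expansions inside and outside $E_K$ (in $D$) and inside and outside $C$ (in $R$) can be performed independently, each adding $n-1$ edges whose types one can track. I expect the formal argument to proceed by induction on $m$, reducing to the base case where $K$ is a single cell---this last case follows from the transitivity of $G_n$ on cells of each given type, which is itself a direct consequence of the extension property in \cref{prop:trans} applied to a graph isomorphism between the two-vertex trees associated to the endpoint pairs of the two cells.
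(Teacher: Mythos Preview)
Your approach attempts a direct combinatorial construction of a graph pair diagram, which is quite different from the paper's argument and, as written, has a genuine gap.

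The paper's proof is much shorter and more conceptual: it observes that every cell contains a branch and that every proper finite union of cells is contained in some branch (take a cell $C(x)$ disjoint from $K$, then the branch at the centre of $C(x2)$ pointing back toward $p_0$ contains all of $K$). Since \cref{cor:comm:trans} already establishes that $G_n$ (indeed $[G_n,G_n]$) acts transitively on the set of branches, one immediately obtains $g$ with $g(K)\subseteq g(B_1)=B_2\subseteq C$. No graph–combinatorial bookkeeping is needed.

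Your argument, by contrast, leaves the crucial step unproved. Matching BB/BE counts inside $C(y)$ is necessary but far from sufficient for the existence of a graph isomorphism $f\colon D\to R$ sending $E_K$ into the edges below $y$: a graph isomorphism must send the entire tree $D$ onto $R$, so the relative position and adjacency pattern of $E_K$ inside $D$ must be replicated by the chosen edges under $y$ inside $R$. For instance, if $E_K$ consists of $n-1$ edges all meeting at $p_0$ (so $D$ is the base graph), then $R$ must also be a star, and no isomorphism of stars can push $n-1$ edges below a single edge $y$; one is forced to first expand $D$ outside $E_K$ and then carefully choose $f$ swapping the two resulting centres. You acknowledge this difficulty but do not resolve it --- ``I expect the formal argument to proceed by induction on $m$'' is not a proof, and no inductive step is given. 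Moreover, your base case via \cref{prop:trans} applied to ``two-vertex trees'' only works directly for BB-cells: for a BE-cell one endpoint is a rational endpoint rather than a branch point, so it lies outside the scope of \cref{prop:trans}, and an additional argument (e.g.\ via \cref{lem:perm} or via transitivity on branches) is needed.

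The approach is not unsalvageable, but completing it would require substantially more work than the two-line reduction to branch-transitivity that the paper uses.
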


\begin{proof}
Given a cell $C$ of $D_n$ and a union $K$ of finitely many cells we need to show that $G_n$ can map $K$ inside $C$.
Note that every cell must contain a branch (in fact, it contains infinitely many) and that the union of any finite amount of cells is fully included in some branch.
Then it suffices to know that, given two arbitrary branches $B_1$ and $B_2$, there exists an element $g$ of $G_n$ such that $g(B_1) \subseteq B_2$.
But then, thanks to \cref{cor:comm:trans}, we are already done.
\end{proof}

This prompts the following application of the main result from \cite{IG}, and gives an example of behaviour of $G_n$ that makes it appear more similar to Thompson groups $T$ and $V$ than to $F$ (see \cref{sub:amen} below for another example of this).

\begin{proposition}
\label{prop:IG}
Dendrite rearrangement groups and their commutator subgroups are not invariably generated.
\end{proposition}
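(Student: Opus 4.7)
The plan is essentially to invoke the machinery already set up immediately above the statement. The paper has just recalled (in the discussion preceding \cref{prop:wct}) that the main result of \cite{IG} asserts: if a rearrangement group is weakly cell-transitive, then neither it nor its commutator subgroup is invariably generated. And \cref{prop:wct} has just established that every dendrite rearrangement group $G_n$ is weakly cell-transitive, using \cref{cor:comm:trans} (transitivity of $[G_n,G_n]$ on branches) together with the observation that any finite union of cells sits inside a single branch while every cell contains a branch.

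So my proof will be a one-line combination: apply the main theorem of \cite{IG} to \cref{prop:wct}. Concretely, I would write something like: \emph{By \cref{prop:wct}, $G_n$ is weakly cell-transitive; by the main result of \cite{IG}, any weakly cell-transitive rearrangement group and its commutator subgroup fail to be invariably generated. Hence both $G_n$ and $[G_n,G_n]$ are not invariably generated.}

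There is no real obstacle here because all the hard content has been packaged into the two cited results. The only thing to be mindful of is making sure the citation to \cite{IG} is precise enough that the reader can locate the statement about commutator subgroups as well as the group itself (the paper's introductory sentence about \cite{IG} above \cref{prop:wct} already phrases this as one combined conclusion, so a single reference suffices). No case analysis, no choice of branches, and no explicit construction is needed at this stage: the weak cell-transitivity property has already been verified, and it is exactly the hypothesis of the cited theorem.
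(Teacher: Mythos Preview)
Your proposal is correct and matches the paper's approach exactly: the paper does not give a separate proof but simply presents \cref{prop:IG} as an immediate application of the main result of \cite{IG} to the weak cell-transitivity established in \cref{prop:wct}. Your one-line combination is precisely what is intended.
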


\subsection{Existence of Free Subgroups}
\label{sub:amen}

As the result in the previous Subsection, the following is another example of a behaviour of dendrite rearrangement groups that resemble that of Thompson groups $T$ and $V$ rather than $F$.

\begin{proposition}
Dendrite rearrangement groups contain non-abelian free groups.
In particular, they are not amenable.
\end{proposition}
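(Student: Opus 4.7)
The plan is to apply the classical ping-pong lemma, leveraging the weak cell-transitivity of $G_n$ established in \cref{prop:wct}; non-amenability will then follow from von Neumann's theorem.

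First I would pick four pairwise disjoint cells $A^{+}, A^{-}, B^{+}, B^{-}$ of $D_n$ lying deep enough inside the branches at $p_0$ that no two of them share a single point. For $n \geq 4$ the choice $A^{\pm} = C(12), C(22)$ and $B^{\pm} = C(32), C(42)$ works, since these cells lie in four distinct branches at $p_0$ and none of them contains the central branch point; for $n = 3$ one descends one expansion further, for example $A^{\pm} = C(122), C(132)$ and $B^{\pm} = C(222), C(322)$. Fixing a cellular partition of $D_n$ deep enough to have each of these four cells as one of its pieces, I would then apply \cref{prop:wct} twice: once to produce an element $g \in G_n$ carrying the union of the partition cells other than $A^{-}$ (which plays the role of ``$D_n \setminus A^{-}$'' here) inside $A^{+}$, and once to produce an $h \in G_n$ mapping the analogous ``$D_n \setminus B^{-}$'' inside $B^{+}$.

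Setting $X_g := A^{+} \cup A^{-}$ and $X_h := B^{+} \cup B^{-}$, which are disjoint by construction, the inclusion $A^{+} \subseteq D_n \setminus A^{-}$ yields $g(A^{+}) \subseteq A^{+}$, and iterating produces the familiar ``north--south'' behaviour $g^{k}(D_n \setminus A^{-}) \subseteq A^{+}$ and $g^{-k}(D_n \setminus A^{+}) \subseteq A^{-}$ for every $k \geq 1$, with the analogous statements for $h$ in terms of $B^{\pm}$. Since $X_h$ is disjoint from both $A^{+}$ and $A^{-}$, these specialize to $g^{k}(X_h) \subseteq X_g$ for every $k \neq 0$, and symmetrically $h^{k}(X_g) \subseteq X_h$; the ping-pong lemma then delivers that $\langle g, h \rangle$ is freely generated by $g$ and $h$, giving the non-abelian free subgroup sought.

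The main technical obstacle, though ultimately routine, will be the bookkeeping needed to interpret ``$D_n \setminus A^{-}$'' as a union of cells to which \cref{prop:wct} applies verbatim; this is disposed of once and for all by fixing, at the outset, a single cellular partition of $D_n$ containing each of the four pairwise disjoint cells $A^{\pm}, B^{\pm}$ as pieces, and by verifying that the chosen cells are genuinely set-theoretically disjoint (not merely interior-disjoint) so that the ping-pong inclusions above hold without exception at the finitely many boundary branch points and rational endpoints.
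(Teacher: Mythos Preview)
Your proposal is correct and follows essentially the same ping-pong route as the paper's third argument, with the twist that you use \cref{prop:wct} abstractly to manufacture the two north--south elements rather than exhibiting them explicitly. The paper instead writes down concrete generators ($g_0^2$ and a companion $h$) and verifies the ping-pong inclusions on explicit cells, and it also notes that the result follows immediately from weak cell-transitivity via \cite[Corollary~4.3]{IG}; your argument is, in effect, a self-contained re-derivation of that corollary specialised to $G_n$. Your version has the advantage of avoiding any explicit computation and any external citation, at the modest cost of the cellular-partition bookkeeping you flag at the end (which, as you observe, is routine once the four cells are chosen pairwise set-theoretically disjoint).
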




This can be seen as a consequence of weak cell-transitivity (\cref{prop:wct}), since by \cite[Corollary 4.3]{IG} weakly cell-transitive rearrangement groups include non-abelian free groups.
It can also be seen as a consequence of \cite{Shi12}, whose main result states that a group acting minimally on a non-degenerate dendrite includes non-abelian free groups.
Minimality for dendrite rearrangement groups follows from \cite[Proposition 3.3]{IG} or can be proved using \cite[Theorem 5.2]{GM19}.

As a third option, here is a direct proof using the classical ping-pong argument.
We need to find $g,h \in G_n$ and disjoint subsets $X,Y$ of $D_n$ such that $g^z(X) \subseteq Y$ and $h^z(Y) \subseteq X$, for all $z \in \mathbb{Z}\setminus\{0\}$.
Let $g \coloneq g_0^2$ and let $h$ be the element depicted in \cref{fig:free:subgroup:generator} (the figure works for $n=3$; for higher $n$, one may consider any extension to the missing cells, which are those corresponding to words that include digits between $3$ and $n-1$).
Let $X = C^\mathrm{o}(1n) \cup C^\mathrm{o}(n)$ and $Y = C^\mathrm{o}(12) \cup C^\mathrm{o}(2)$, which are disjoint.
Now it is easy to check that $g(Y) \subseteq C^\mathrm{o}(1n)$ and that $g\left(C^\mathrm{o}(1n)\right) \subseteq C^\mathrm{o}(1n)$, so $g^z(Y) \subseteq X$ for all $z>0$.
Analogous arguments show that $g^z(Y) \subseteq X$ for $z<0$ and that $h^z(X) \subseteq Y$ for all $z>0$ and $z<0$.

\begin{figure}
\centering
\begin{tikzpicture}[scale=1.3333]
    \node[vertex] (LL) at (0,0) {};
    \node[vertex] (L) at (1,0) {};
        \node[vertex] (LT) at ($(L)+(90:.75)$) {};
        \node[vertex] (LC) at ($(L)+(90:.375)$) {};
        \node[vertex] (LR) at ($(LC)+(0:.375)$) {};
    \node[vertex] (C) at (2,0) {};
        \node[vertex] (CT) at ($(C)+(90:1.5)$) {};
    \node[vertex] (R) at (4,0) {};
    \draw (LL) to node[below]{\scriptsize$1n$} (L);
    \draw (L) to node[below]{\scriptsize$11$} (C);
        \draw (LC) to node[left]{\scriptsize$121$} (L);
        \draw (LC) to node[left]{\scriptsize$12n$} (LT);
        \draw (LC) to node[xshift=.06667cm,yshift=.2cm]{\scriptsize$122$} (LR);
    \draw (C) to node[below]{\scriptsize$n$} (R);
        \draw (C) to node[right]{\scriptsize$2$} (CT);
    \draw[->] (4.3333,0) to node[above]{$h$} (4.6667,0);
    \begin{scope}[xshift=5cm]
    \node[vertex] (LL) at (0,0) {};
    \node[vertex] (C) at (2,0) {};
        \node[vertex] (CT) at ($(C)+(90:1.5)$) {};
        \node[vertex] (CC) at ($(C)+(90:.75)$) {};
            \node[vertex] (CTC) at ($(CC)+(90:.375)$) {};
            \node[vertex] (CTR) at ($(CTC)+(0:.375)$) {};
        \node[vertex] (CR) at ($(CC)+(0:.75)$) {};
    \node[vertex] (R) at (4,0) {};
    \draw (C) to node[below]{\scriptsize$h(12n)=1$} (LL);
    \draw (C) to node[below]{\scriptsize$h(122)=n$} (R);
    \draw (CC) to node[left]{\scriptsize$h(121)=21$} (C);
    \draw (CC) to node[xshift=.3cm,yshift=-.2cm]{\scriptsize$h(1n)=22$} (CR);
    \draw (CC) to node[left]{\scriptsize$h(11)=2n1$} (CTC);
    \draw (CTC) to node[xshift=.57cm,yshift=-.2cm]{\scriptsize$h(n)=2n2$} (CTR);
    \draw (CTC) to node[left]{\scriptsize$h(2)=2nn$} (CT);
    \end{scope}
\end{tikzpicture}
\caption{An element $h$ that plays ping-pong with $g_0^2$.}
\label{fig:free:subgroup:generator}
\end{figure}


\section{Other Dendrite Rearrangement Groups}
\label{sec:generalizations}

The main result of this Section is the density of the Airplane rearrangement group $T_A$ in the orientation-preserving subgroup $\mathbb{H}_\infty^+$ of the full homeomorphism group of the infinite-order Wa\.zewski dendrite $D_\infty$ (\cref{thm:dns:A}).
This will be proved with an application of the same strategy used in \cref{sec:dns}, also employing results from \cite{Airplane}.

The rest of the Section is devoted to introducing multiple possible generalizations of dendrite rearrangement groups, of which $T_A$ is one example.
We will first briefly discuss the Vicsek rearrangement groups from \cite{belk2016rearrangement} (\cref{sub:Vic}), then we will introduce an orientation-preserving version of dendrite rearrangement groups (\cref{sub:orientation}) which will prompt the aforementioned result about the Airplane rearrangement group $T_A$ (\cref{sub:Air}), and finally we will exhibit replacement systems for the generalized Wa\.zewski dendrites $D_S$ (\cref{sub:gen:den}).
With the exception of the results about $T_A$, this Section is mostly speculative.

\subsection{The Vicsek rearrangement groups}
\label{sub:Vic}

Introduced in \cite[Example 2.1]{belk2016rearrangement}, the Vicsek replacement systems are the same as the dendrite replacement systems $\mathcal{D}_n$ except for having an additional edge originating at the initial vertex.
For example, \cref{fig:vicsek:rep:sys} depicts the Vicsek replacement system for $n=4$.

\begin{figure}
\centering
\begin{tikzpicture}
    \node at (-2.2,0) {$\Gamma =$};
    \node[vertex] (C) at (0,0) {};
    \node[vertex] (L) at (-1.5,0) {};
    \node[vertex] (T) at (0,1.5) {};
    \node[vertex] (R) at (1.5,0) {};
    \node[vertex] (B) at (0,-1.5) {};
    \draw[edge] (C) to (L);
    \draw[edge] (C) to (T);
    \draw[edge] (C) to (R);
    \draw[edge] (C) to (B);
    \begin{scope}[xshift=7.5cm]
    \node at (-3.2,0) {$R =$};
    \node[vertex] (C) at (0,0) {};
    \node[vertex] (I) at (-2.5,0) {}; \draw (-2.5,0) node[above]{$\iota$};
    \node[vertex] (L) at (-1.25,0) {};
    \node[vertex] (T) at (0,1.25) {};
    \node[vertex] (R) at (1.25,0) {}; \draw (1.25,0) node[above]{$\tau$};
    \node[vertex] (B) at (0,-1.25) {};
    \draw[edge] (C) to (L);
    \draw[edge] (C) to (T);
    \draw[edge] (C) to (R);
    \draw[edge] (C) to (B);
    \draw[edge] (I) to (L);
    \end{scope}
\end{tikzpicture}
\caption{The Vicsek replacement system for $n=4$.}
\label{fig:vicsek:rep:sys}
\end{figure}
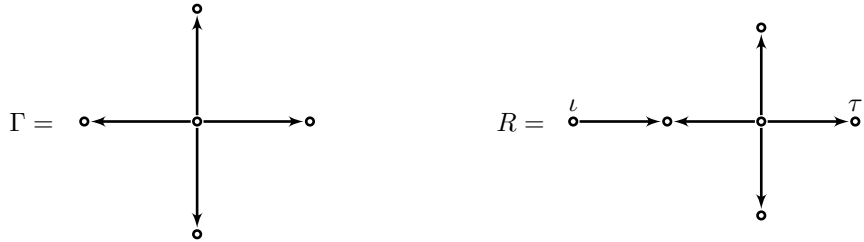

It is clear that each Vicsek replacement system has the same Wa\.zewski dendrite $D_n$ as a limit space (although dendrite replacement systems $\mathcal{D}_n$ are arguably more natural replacement systems for these limit spaces, while the Vicsek replacement system reflects some of the metric aspects of the Vicsek fractals).
It seems likely that the $n$-th Vicsek rearrangement group is generated by a copy of the Higman-Thompson group $F_3$ along with the permutation group $\mathrm{Sym}(n)$ in a similar way in which $G_n$ is generated by $F$ and $\mathrm{Sym}(n)$ by \cref{thm:gen}.
It is known that $F$ and $F_3$ are non-isomorphic groups, as they have different abelianization, which was proved in \cite{BROWN198745}.
However, it does not seem immediately clear whether or not the $n$-th Vicsek rearrangement group is isomorphic to $G_n$, since Rubin's theorem cannot be immediately applied (as discussed in \cref{rmk:rubin}) and the finite subgroups (\cref{sub:fin:subgps}) are likely the same.

Additionally, one may be inspired by this example to build replacement systems with even more edges between the initial vertex $\iota$ and the branch point of the replacement graph, which may result in a rearrangement group that has some Higman-Thompson group $F_m$ acting on each EE-arc, in place of $F$ or $F_3$.

We did not investigate these questions any further.

\subsection{Orientation-Preserving Dendrite Rearrangement Groups}
\label{sub:orientation}

The \textbf{oriented dendrite replacement system} $\boldsymbol{\mathcal{D}_n^+}$, depicted in \cref{fig:rep:sys:+}, is the 2-colored replacement system whose base graph and black replacement graph are the same graph consisting of a directed closed path of $n$ red edges and $n$ vertices, along with a black edge originating from each of these $n$ vertices, each terminating at one of $n$ other vertices.
The initial and terminal vertices $\iota$ and $\tau$ of the black replacement graph are two distinct vertices of in-degree $1$ and out-degree $0$.
The red replacement graph is a single red edge.

\begin{figure}
\centering
\begin{tikzpicture}
    \node at (-2,0) {$\Gamma =$};
    \node[vertex] (c1) at (0:.75) {};
    \node[vertex] (c2) at (72:.75) {};
    \node[vertex] (c3) at (144:.75) {};
    \node[vertex] (cn-1) at (216:.75) {};
    \node[vertex] (cn) at (288:.75) {};
    \node[vertex] (1) at (0:2) {};
    \node[vertex] (2) at (72:2) {};
    \node[vertex] (3) at (144:2) {};
    \node[vertex] (n-1) at (216:2) {};
    \node[vertex] (n) at (288:2) {};
    \draw[edge] (c1) to node[above]{$1$} (1);
    \draw[edge] (c2) to node[above left]{$2$} (2);
    \draw[edge,dotted] (c3) to (3);
    \draw[edge,dotted] (cn-1) to (n-1);
    \draw[edge] (cn) to node[right]{$n$} (n);
    \draw[edge,red] (c1) to[out=90,in=-12,looseness=.85] node[right]{$r_1$} (c2);
    \draw[edge,red,dotted] (c2) to[out=162,in=54,looseness=.85] (c3);
    \draw[edge,red,dotted] (c3) to[out=234,in=126,looseness=.85] (cn-1);
    \draw[edge,red,dotted] (cn-1) to[out=306,in=198,looseness=.85] (cn);
    \draw[edge,red] (cn) to[out=18,in=270,looseness=.85] node[right]{$r_n$} (c1);
    \begin{scope}[xshift=5cm,yshift=-1cm]
    \node at (0,2.5) {$R_{\text{black}} =$};
    \node[vertex] (b1) at (180:.75) {};
    \node[vertex] (b2) at (120:.75) {};
    \node[vertex] (bn-1) at (60:.75) {};
    \node[vertex] (bn) at (0:.75) {};
    \node[vertex] (r1) at (180:2) {}; \draw (180:2) node[above]{$\iota$};
    \node[vertex] (r2) at (120:2) {};
    \node[vertex] (rn-1) at (60:2) {};
    \node[vertex] (rn) at (0:2) {}; \draw (0:2) node[above]{$\tau$};
    \draw[edge] (b1) to node[below]{$1$} (r1);
    \draw[edge,dotted] (b2) to (r2);
    \draw[edge,dotted] (bn-1) to (rn-1);
    \draw[edge] (bn) to node[below]{$n$} (rn);
    \draw[edge,red] (bn-1) to[out=-30,in=90,looseness=1] node[right]{$r_{n-1}$} (bn);
    \draw[edge,red,dotted] (b2) to[out=30,in=150,looseness=1] (bn-1);
    \draw[edge,red] (b1) to[out=90,in=210,looseness=1] node[left]{$r_1$} (b2);
    \draw[edge,red] (bn) to[out=270,in=270,looseness=1.45] node[below]{$r_n$} (b1);
    \end{scope}
    \begin{scope}[xshift=8.75cm,yshift=-.5cm]
    \node at (0,1) {$R_{\text{\textcolor{red}{red}}} =$};
    \node[vertex] (ir) at (-.75,0) {}; \draw (-.75,0) node[above]{$\iota$};
    \node[vertex] (tr) at (.75,0) {}; \draw (.75,0) node[above]{$\tau$};
    \draw[edge,red] (ir) to node[above]{$r$} (tr);
    \end{scope}
\end{tikzpicture}
\caption{A schematic depiction of the oriented dendrite replacement system $\mathcal{D}_n^+$.}
\label{fig:rep:sys:+}
\end{figure}
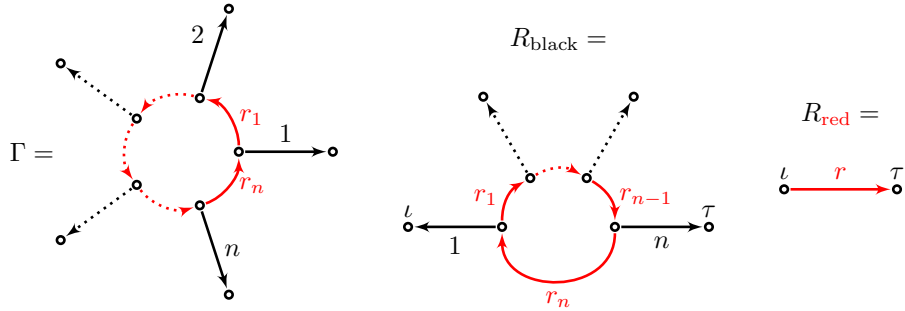

$\mathcal{D}_n^+$ is not an expanding replacement system (\cref{def:expanding}), since the red replacement graph only has two vertices linked by an edge.
Even if this does not pose an obstacle to the definition of a rearrangement group $G_n^+$ based on $\mathcal{D}_n^+$ (it would not be defined as a group of homeomorphisms, but only as a subgroup of the topological full group of a one-sided subshift of finite type), the gluing relation (\cref{def:gluing}) might not be (and in fact is not) an equivalence relation, so the limit space cannot be defined in the usual way (\cref{def:limit:space}).
However, since the gluing relation of $\mathcal{D}_n^+$ is reflexive and symmetric, it suffices to take the quotient of the symbol space of $\mathcal{D}_n^+$ by the transitive closure of the gluing relation in order to obtain an equivalence relation and a well defined limit space.

As noted in \cite[Remark 1.23]{belk2016rearrangement}, this limit space might not be as well-behaved as in the expanding case.
In our case, if we say that an edge is \textbf{null-expanding} when the replacement graph of its color consists of a sole edge, it is easy to see that each sequence $x \bar{r}$, where $xr$ is a null-expanding edge, ends up collapsing to a single point in the limit space, so the $n$ red cells $C(y r_1), \dots, C(y r_n)$ (for any finite word $y$ ending with a black edge) all degenerate to the same single point (these will correspond exactly to the branch points).
It is easy then to check that the limit spaces of $\mathcal{D}_n$ and $\mathcal{D}_n^+$ are the same, so the possible issues of the limit space not being well-behaved do not show up here.
More precisely, there is a canonical homeomorphism between the two limit spaces given by mapping each sequence $\omega$ of $\mathcal{D}_n^+$ that does not feature null-expanding edges to the same sequence of $\mathcal{D}_n$.
Every other sequences of $\mathcal{D}_n^+$ must be of the form $x r_i \bar{r}$, so it represents the same branch point as $x i 1 \bar{n}$ and thus are mapped to this same sequence of $\mathcal{D}_n$.

One can then define the rearrangement groups $G_n^+$ for the replacement systems $\mathcal{D}_n^+$ by associating to each graph pair diagram a homeomorphism in the usual way.
The addition of the red edges will simply force every rearrangement to preserve the orientation of the dendrite, by which we mean that they preserve a circular ordering of the set of branches at each branch point.
We leave the study of these orientation-preserving dendrite groups for future research, but we list here a few remarks about them:
\begin{enumerate}
    \item The group $G_3^+$ is likely isomorphic to the finitely generated Thompson-like group studied in the dissertation \cite{Dendrite}.
    That is the group of those homeomorphisms of $S^1$ that preserve a lamination induced from the Julia set for the complex map $z \to z^2 + i$ (which is homeomorphic to $D_3$).
    We suspect that every $G_n^+$ can be built as a group of homeomorphisms of the circle by using laminations in this way.
    \item The edges of $\mathcal{D}_n^+$ are not undirected, unlike those of $\mathcal{D}_n$ (see \cref{rmk:undirected}).
    \item The permutation subgroup from \cref{sub:perm} translates to a subgroup of $G_n^+$ that is cyclic of order $n$ instead of copies of the symmetric group on $n$ elements.
    \item The Thompson subgroups from \cref{lem:thomp:trans} do not seem to immediately translate to $G_n^+$, but there are probably copies of Thompson's group $F$ inside $G_n^+$ that act on certain arcs of $D_n$.
    \item The transitivity property described in \cref{prop:trans} for $G_n$ may translate to an extension of isomorphisms between trees with a rotation system.
    If this happens, it might be used to prove that $G_n^+$ is dense in the group $\mathbb{H}_n^+$ of all orientation-preserving homeomorphisms of $D_n$ (i.e., the group of those homeomorphisms that at each branch point preserve a circular order of the branches).
    \item The parity map from \cref{sub:parity} would be trivial in $G_n^+$, so the abelianization may simply be $\mathbb{Z}$ (which is what happens in the Airplane rearrangement group $T_A$, see the next Subsection for the relation between $T_A$ and dendrites).
    Note that in \cite{Dendrite} it is conjectured that the abelianization of this dendrite-Thompson group is isomorphic to $\mathbb{Z} \oplus \mathbb{Z}_2 \oplus \mathbb{Z}_3$, picturing a much more complex situation.
    \item If one chooses a different terminal vertex $\tau$ in the black replacement graph (\cref{fig:rep:sys:+}), then some of these remarks do not hold anymore (for example, if $n$ is even and $\tau$ is the terminal vertex of the $n/2$-th edge, then the black edges are actually undirected).
\end{enumerate}

Moreover, it looks like the Kaleidoscopic group $\mathcal{K}(A_3)$ studied in \cite{OrientationD3} (and their generalizations $\mathcal{K}(C_n)$) include the orientation-preserving dendrite rearrangement group $G_3^+$ (and $G_n^+$, respectively) as subgroups.
Indeed, the groups $\mathcal{K}(C_n)$ seem to consist of homeomorphisms of $D_n$ that preserve an orientation at each branch point.
It may then be possible that $G_n^+$ is dense in $\mathcal{K}(C_n)$, but we did not investigate this here.

\subsection{Infinite Order and the Airplane Rearrangement Group}
\label{sub:Air}

The author could not find a natural generalization of the dendrite replacement systems $\mathcal{D}_n$ that produces the infinite-order Wa\.zewski Dendrite $D_\infty$ (where each branch point has countably infinite order; see \cite{Duc20} for an in-depth study of the topological properties of the group $\mathrm{Homeo}(D_\infty)$).
Instead, if one tries to generalize the ideas described in the previous Subsection for the construction of $\mathcal{D}_n^+$ to the infinite-order case, the resulting replacement system is expanding and has a limit space that is not a dendrite, and its rearrangement group has already been considered in literature:
it is the Airplane replacement system, depicted in \cref{fig:rep:sys:A}, and the rearrangement group $T_A$ was studied in \cite{Airplane}.

Differently from $\mathcal{D}_n^+$, the Airplane replacement system is expanding, and in particular red cells (cells were defined at \cpageref{sub:rearrangements}) do not collapse to branch points, but instead they end up forming infinitely many circles in the limit space, meaning that the limit space is certainly not a dendrite.
However, if one takes the quotient of the Airplane limit space by the equivalence relation that relates two points whenever they belong to the same circle of the Airplane (i.e., if they are represented by sequences $x \alpha$ and $x \beta$, where $x$ is any finite word ending with $b_2$ or $b_3$ and $\alpha$ and $\beta$ are infinite sequences in the alphabet $\{ r_1, r_2 \}$), then the resulting space is the infinite-order Wa\.zewski Dendrite $D_\infty$.
From this perspective, one can think of the Airplane rearrangement group $T_A$ as if it were some sort of a orientation-preserving dendrite rearrangement group $G_\infty^+$, because the elements of $T_A$ act by self-permutations on the set of these circles, so a faithful action of $T_A$ on $D_\infty$ is naturally defined.
We will be considering this action in the remainder of this section, but it is important to keep in mind that this is not the canonical action of a rearrangement group on its limit space.

Under this identification, the branch points of $D_\infty$ correspond precisely to the circles of the Airplane, and each branch at a fixed branch point corresponds to a dyadic point on the circle (see \cite{Airplane} for more detail).
The main distinction between a dendrite rearrangement group $G_n$ and $T_A$ thus consists in the fact that the group of permutations of branches around a branch point is Thompson's group $T$ instead of the finite permutation group $\mathrm{Sym}(n)$.

Thanks to the transitivity properties of $T_A$ proved in \cite{Airplane}, it is not hard to show that an analog of \cref{prop:trans} holds for $T_A$, as sketched below.
The construction of the trees $T(\mathcal{F})$ described at \cpageref{txt:trees} clearly works even if countably many branches are attached to each branch point.
We equip each tree $T(\mathcal{F})$ with the rotation system naturally prompted by the circular ordering of branches of $D_\infty$ induced by the circular ordering of the dyadic points on each circle of the Airplane.
(A \textbf{rotation system} on a graph is simply an assignment of a circular order to the edges incident on each vertex.)

\begin{proposition}
\label{prop:Airplane:trans}
Given two finite subsets $\mathcal{F}_1$ and $\mathcal{F}_2$ of $\mathrm{Br}$, any graph isomorphism between $T(\mathcal{F}_1)$ and $T(\mathcal{F}_2)$ that is compatible with their rotation systems can be extended to an element of $T_A$.
\end{proposition}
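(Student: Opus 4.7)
The plan is to adapt the inductive proof of \cref{prop:trans} to the Airplane setting. The finite symmetric group $\mathrm{Sym}(n)$ that acted on the branches at a branch point of $D_n$ is to be replaced by Thompson's group $T$, which governs the branches at a branch point of $D_\infty$ via the dyadic points on the associated Airplane circle. The rotation-system hypothesis on $f$ precisely captures the fact that every element of $T_A$ preserves the cyclic order of branches at each branch point; without this hypothesis the statement would fail, since $T_A$-elements cannot realize arbitrary permutations of the infinitely many branches at a single branch point.

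First I would induct on the common number $m$ of vertices of $T(\mathcal{F}_1)$ and $T(\mathcal{F}_2)$. The base case $m=1$ reduces to transitivity of $T_A$ on the set $\mathrm{Br}$ of branch points of $D_\infty$ (equivalently, on the set of circles of the Airplane limit space), which follows from the transitivity results established in \cite{Airplane}. For the inductive step, I would pick a leaf $l$ of $T(\mathcal{F}_1)$, form the trimmed trees $T(V_1 \setminus \{l\})$ and $T(V_2 \setminus \{f(l)\})$ exactly as in \cref{prop:trans} (equipped with their inherited rotation systems), and apply the inductive hypothesis to the induced rotation-compatible isomorphism $f^*$ to obtain some $g^* \in T_A$ extending it.

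It then remains to adjust $g^*$ so as to also send $l$ to $f(l)$. Let $v$ denote the vertex of $T(\mathcal{F}_2)$ adjacent to $f(l)$, and let $B_f$ and $B_{g^*}$ be the branches at $v$ containing $f(l)$ and $g^*(l)$ respectively; by construction neither of these branches meets any other vertex of $T(V_2 \setminus \{f(l)\})$. The rotation-compatibility of $f$ combined with the cyclic-order preservation of $g^*$ forces $B_f$ and $B_{g^*}$ to occupy the same cyclic sector at $v$, i.e., the same position relative to the branches meeting the other vertices of $T(V_2 \setminus \{f(l)\})$. Invoking the fact that Thompson's group $T$ acts transitively on cyclically-ordered finite tuples of dyadic points inside any dyadic sub-arc of the circle, I would produce an element $h_1 \in T_A$ with support in $B_f \cup B_{g^*} \cup \{v\}$ that swaps these two branches and fixes everything else --- the $T_A$-analog of \cref{lem:perm}. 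Finally, with $h_1 g^*(l)$ and $f(l)$ both lying in $B_f$, I would apply the $T_A$-analog of \cref{lem:thomps} (i.e., the existence of copies of Thompson's group $F$ acting on rational arcs of $D_\infty$ inside $B_f$, available from \cite{Airplane}) to find $h_2 \in T_A$, supported in $B_f$, that maps $h_1 g^*(l)$ to $f(l)$. The element $h_2 h_1 g^*$ then extends $f$, completing the induction.

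The main obstacle is the bookkeeping at the adjustment step: one has to pinpoint the precise transitivity and rigid-stabilizer statements about $T_A$ from \cite{Airplane} that underwrite the existence of $h_1$ and $h_2$ with the required supports, and to verify that the cyclic-order constraints imposed on $g^*(l)$ by the inductive step are exactly the ones that the rotation-compatibility hypothesis on $f$ is designed to accommodate. Once these ingredients are assembled, the overall structure of the argument parallels that of \cref{prop:trans} quite closely.
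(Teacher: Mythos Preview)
Your proposal follows the same inductive strategy as the paper's sketch: induct on the number $m$ of vertices, with the base case being transitivity of $T_A$ on branch points (which the paper attributes to \cite[Lemma 6.1]{Airplane}), and the inductive step proceeding by removing a leaf, applying the hypothesis, and then adjusting via the $T$-action at the relevant vertex.

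One point needs correcting in your adjustment step. You describe $h_1$ as an element with support in $B_f \cup B_{g^*} \cup \{v\}$ that \emph{swaps} these two branches; this cannot exist in $T_A$, since a transposition of two points on the circle at $v$ reverses orientation and is therefore not realized by any element of Thompson's $T$. What you actually need (and what the paper's sketch invokes via the $2$-transitivity of $T$ on dyadic points) is an element of the $T$-copy at $v$ that fixes the dyadic points corresponding to the branches meeting the other tree vertices and sends the dyadic point of $B_{g^*}$ to that of $B_f$. The rotation-compatibility hypothesis guarantees these two points lie in the same dyadic arc between the fixed points, so the copy of $F$ on that arc supplies such an element; its support is the union of all branches in that sector, not just $B_f \cup B_{g^*}$. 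With this fix your argument goes through and matches the paper's.
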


\begin{sketch}
By induction on the number $m$ of vertices of $T(\mathcal{F}_1)$ and $T(\mathcal{F}_2)$, the base case is single transitivity, which descends from \cite[Lemma 6.1]{Airplane}.
The inductive step is proved as in \cref{prop:trans}:
for $m+1$, exclude a leaf of $T(\mathcal{F}_1)$ and use the induction hypothesis to map the other $m$ vertices where they need to be mapped, then use the fact that Thompson's group $T$ acts $2$-transitively on the set of dyadic points of $S^1$ to find an element that fixes the aforementioned $m$ vertices and moves the previously excluded leaf to wherever it needs to be moved.
\end{sketch}

Consider the group $\mathbb{H}_\infty^+$ of all orientation-preserving homeomorphisms of $D_\infty$, by which we mean the group of those homeomorphisms of $D_\infty$ that preserve a circular order of branches at each branch point.
If we equip $\mathbb{H}_\infty^+$ with the subspace topology inherited from $\mathbb{H}_\infty$ then, as was done in \cref{sub:dns} for the density of $G_n$ in $\mathbb{H}_n$, we can prove that $T_A$ is dense in $\mathbb{H}_\infty^+$ by showing that every orientation-preserving homeomorphism can be ``approximated'' by elements of $T_A$, i.e.:

\begin{claim}
Let $\phi \in \mathbb{H}_\infty^+$.
For every $k \geq 1$ and for each $p_1, \dots, p_k \in \mathrm{Br}$ there exists a rearrangement $g_k \in T_A$ such that $g_k(p_i) = \phi(p_i)$ for all $i=1,\dots,k$.
\end{claim}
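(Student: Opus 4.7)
The plan is to mirror, nearly verbatim, the proof of \cref{clm:dns}, with \cref{prop:Airplane:trans} playing the role there played by \cref{prop:trans}. First I would set $q_i := \phi(p_i)$ for $i=1,\dots,k$, let $\mathcal{F}_1 = \{p_1,\dots,p_k\}$ and $\mathcal{F}_2 = \{q_1,\dots,q_k\}$, and form the minimal subdendrites $[\mathcal{F}_1]$ and $[\mathcal{F}_2]$ together with their tree approximations $T(\mathcal{F}_1)$ and $T(\mathcal{F}_2)$, equipped with the rotation systems induced by the circular orders of branches at each branch point of $D_\infty$.

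Next I would observe that the restriction $\phi|_{[\mathcal{F}_1]} \colon [\mathcal{F}_1] \to [\mathcal{F}_2]$ is a homeomorphism sending $p_i$ to $q_i$, and that (since $[\mathcal{F}_j]$ is the geometric realization of $T(\mathcal{F}_j)$) it induces a combinatorial graph isomorphism $f \colon T(\mathcal{F}_1) \to T(\mathcal{F}_2)$ defined on vertices by $p_i \mapsto q_i$ and by sending each auxiliary branch-point vertex to its $\phi$-image. Because $\phi$ lies in $\mathbb{H}_\infty^+$ and hence preserves a circular order of branches at every branch point of $D_\infty$, the isomorphism $f$ will automatically respect the rotation systems on $T(\mathcal{F}_1)$ and $T(\mathcal{F}_2)$. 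An application of \cref{prop:Airplane:trans} then extends $f$ to an element $g_k \in T_A$, and this $g_k$ satisfies $g_k(p_i) = q_i = \phi(p_i)$ for every $i$, which is precisely what the claim demands.

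The only point requiring genuine care is the compatibility between the topological notion of orientation preservation used in the definition of $\mathbb{H}_\infty^+$ (a choice of circular order on the set of branches at each branch point of $D_\infty$, preserved by $\phi$) and the combinatorial rotation-system condition required by \cref{prop:Airplane:trans}. A subtle point is that several edges of $T(\mathcal{F}_j)$ incident to a common vertex $v$ may sit inside the same branch of $D_\infty$ at $v$; one needs the rotation system on $T(\mathcal{F}_j)$ to record only the coarser cyclic data coming from $D_\infty$, so that $f$'s preservation of rotation systems is a direct consequence of $\phi$ being orientation-preserving. Once this bookkeeping is in place, the argument is a line-by-line adaptation of \cref{clm:dns}, and combined with the analog of \cite[Proposition 2.4]{DM19} identifying convergence in $\mathbb{H}_\infty^+$ with pointwise convergence on $\mathrm{Br}$, it yields the density of $T_A$ in $\mathbb{H}_\infty^+$.
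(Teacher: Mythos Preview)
Your proposal is correct and follows exactly the approach the paper takes: the paper simply states that the proof is ``precisely the same as that of \cref{clm:dns}'' with \cref{prop:Airplane:trans} replacing \cref{prop:trans}, which is what you do. One small remark: your worry that several edges of $T(\mathcal{F}_j)$ incident to a common vertex $v$ might lie in the same branch of $D_\infty$ at $v$ cannot actually occur, since distinct edges of $T(\mathcal{F}_j)$ at $v$ lead into distinct connected components of $[\mathcal{F}_j]\setminus\{v\}$ and hence into distinct branches of $D_\infty$ at $v$; so the compatibility of rotation systems is immediate once $\phi$ is orientation-preserving.
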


Using the previous \cref{prop:Airplane:trans}, the proof of this Claim is precisely the same as that of \cref{clm:dns}, so we have:

\begin{theorem}
\label{thm:dns:A}
The Airplane rearrangement group $T_A$ is dense in the group $\mathbb{H}_\infty^+$ of all orientation-preserving homeomorphisms of $D_\infty$.
\end{theorem}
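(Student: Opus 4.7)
The plan is to transcribe the strategy of Section~\ref{sec:dns} into the Airplane setting, using Proposition~\ref{prop:Airplane:trans} in place of Proposition~\ref{prop:trans}. First I would pin down the topology: endow $\mathbb{H}_\infty$ with uniform convergence and $\mathbb{H}_\infty^+$ with the subspace topology. The goal is to reduce density of $T_A$ in $\mathbb{H}_\infty^+$ to pointwise approximation on the branch set $\mathrm{Br}$ of $D_\infty$, by invoking the analog for $D_\infty$ of \cite[Proposition 2.4]{DM19} (which was already the mechanism used for $G_n$ in \cref{sub:dns}): since $\mathrm{Br}$ is countable and dense in $D_\infty$, the topology of uniform convergence restricted to $\mathbb{H}_\infty^+$ agrees with the pointwise-convergence topology on the induced permutations of $\mathrm{Br}$. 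This turns density into the finitary task stated in the Claim preceding the theorem.

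Next I would address that finitary task. Fix $\phi\in\mathbb{H}_\infty^+$, an integer $k\geq 1$, and branch points $p_1,\dots,p_k$. Set $\mathcal{F}_1=\{p_1,\dots,p_k\}$ and $\mathcal{F}_2=\{\phi(p_1),\dots,\phi(p_k)\}$, and build the minimal trees $T(\mathcal{F}_1),T(\mathcal{F}_2)$ exactly as on \cpageref{txt:trees} (the construction works verbatim in $D_\infty$, since finite sets of branch points still generate a unique minimal subdendrite with a canonical minimal tree structure). The restriction of $\phi$ to the subdendrite $[\mathcal{F}_1]$ is a homeomorphism onto $[\mathcal{F}_2]$ that sends $p_i\mapsto\phi(p_i)$, and because $\phi$ preserves the circular order of branches at every branch point, the resulting graph isomorphism $T(\mathcal{F}_1)\to T(\mathcal{F}_2)$ is compatible with the rotation systems induced from the circular orderings on the Airplane's circles. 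Proposition~\ref{prop:Airplane:trans} then produces a $g\in T_A$ extending this isomorphism, which gives $g(p_i)=\phi(p_i)$ for every $i$, establishing the Claim and hence the theorem.

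The main obstacle, in my view, is not the finitary step (it is a direct application of \cref{prop:Airplane:trans}), but verifying cleanly the topological reduction in the first paragraph for $D_\infty$. One has to check both that $\mathbb{H}_\infty^+$ is a closed subgroup of $\mathbb{H}_\infty$ (so the uniform-convergence topology restricts sensibly) and that pointwise agreement on arbitrarily large finite subsets of $\mathrm{Br}$ forces uniform proximity of homeomorphisms of $D_\infty$. The first point is essentially the observation that preserving a circular order at each branch point is a pointwise-closed condition; the second is a compactness-plus-density argument, which is how \cite[Proposition 2.4]{DM19} proceeds for $\mathbb{H}_n$ and adapts with no substantive change to $D_\infty$. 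Once these two facts are recorded, the proof of \cref{thm:dns:A} is then a line-by-line rerun of \cref{clm:dns}, and no further ideas are required.
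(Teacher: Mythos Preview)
Your proposal is correct and follows exactly the paper's approach: reduce density to the finitary approximation Claim via the analog of \cite[Proposition 2.4]{DM19}, then prove the Claim by building $T(\mathcal{F}_1)$, $T(\mathcal{F}_2)$ and invoking Proposition~\ref{prop:Airplane:trans}, just as \cref{clm:dns} invoked \cref{prop:trans}. The only superfluous step is your concern that $\mathbb{H}_\infty^+$ be closed in $\mathbb{H}_\infty$; the subspace topology is well-defined regardless, so this is not needed for the argument.
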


\begin{remark}
Theorem 1 of \cite{BasilicaDense} shows that the Basilica rearrangement group $T_B$ is dense in the group $\mathrm{Aut}_+(T(\mathcal{B}))$ of orientation-preserving automorphisms of the (countably) infinite-degree regular tree.
Building on this result, the final remarks of \cite{BasilicaDense} mention that the Airplane rearrangement group $T_A$ is likely dense in the group of the orientation-preserving (in the same sense that we used elsewhere) homeomorphisms of the Tits-Bruhat $\mathbb{R}$-tree of the field of formal Laurent series $\mathbb{Q}\llbracket t, t^{1/2}, t^{1/4}, \dots \rrbracket$, which we will denote here by $\mathbb{T}$.
This real tree, which is not compact, is homeomorphic to the infinite-degree Wa\.zewski dendrite minus its endpoints.
Indeed, one can define a natural totally bounded metric on $\mathbb{T}$ in order to build its completion $\overline{\mathbb{T}}$, which is compact and is thus homeomorphic to the dendrite $D_\infty$.
Since the action of a homeomorphism of $\mathbb{T}$ entirely determines the action on the set $\overline{\mathbb{T}} \setminus \mathbb{T}$ of endpoints, the groups of (orientation-preserving) homeomorphisms of $D_\infty$ and $\overline{\mathbb{T}}$ are the same.
Hence, \cref{thm:dns:A} supports the claim that $T_A$ is dense in the group of the orientation-preserving homeomorphisms of the Tits-Bruhat $\mathbb{R}$-tree of the field of formal Laurent series $\mathbb{Q} \llbracket t, t^{1/2}, t^{1/4}, \dots \rrbracket$.
\end{remark}

\begin{remark}
As a final note about the infinite-order Wa\.zewski dendrite, we mention that it might be possible to build a (non orientation-preserving) rearrangement group for $D_\infty$ by either allowing the base and replacement graphs of a replacement system to be infinite, or maybe by building a direct limit from the inclusions $G_n \leq G_{n+1}$ (\cref{prop:inclusion}).
If one allows any graph isomorphism, the group built with the first method is not countable, as it contains a copy of $\mathrm{Sym}(\infty)$;
the second method instead seems to construct a ``finitary'' countable subgroup of the previous one that embeds into Thompson's group $V$.
We leave the exploration of these generalizations for future research.
\end{remark}

\subsection{Wa\.zewski Dendrite with Multiple Orders}
\label{sub:gen:den}

The final possible generalization that we mention is about the so-called \textbf{generalized Wa\.zewski dendrites}:
for each finite subset $S \subset \mathbb{N}_{\geq3}$, there exists a unique dendrite $D_S$ whose points have orders that belongs to $S \cup \{1, 2\}$ and such that every arc of $D_S$ contains points of every order in $S$.
Section 6 of \cite{DM18} (the entirety of which has been an important reference throughout this work) is about the full homeomorphism groups of generalized Wa\.zewski dendrites.

Given a finite $S \subset \mathbb{N}_{\geq3}$, we can build a (polychormatic) replacement system in the following way.
First, fix an ordering $s_1, \dots, s_k$ of the elements of $S$.
\begin{itemize}
    \item The set of colors is $\{1, \dots, k\}$.
    \item The $i$-th replacement graph is a tree consisting of a vertex of degree $s_i$ that is the origin of $s_i$ edges colored by $s_{i+1}$ terminating at $s_i$ distinct leaves;
    the initial and terminal vertices $\iota$ and $\tau$ are two distinct leaves.
    \item The base graph is the same as the first replacement graph (in truth, any replacement graph would work).
\end{itemize}
\cref{fig:rep:sys:col} depicts an example of such a replacement system, where $S = \{3, 4, 6\}$ and the chosen ordering is $4 \to 3 \to 6$.

The fact that the limit space is $D_S$ can be proved immediately as done at \cpageref{txt:rep:sys:den} for $\mathcal{D}_n$, so in particular the chosen ordering on $S$ does not change the limit space by \cite[Theorem 6.2]{selfhomeomorphisms}.
It is not clear, however, whether the rearrangement group is affected by this choice.
Then in order to keep track of this order, we denote by $G_{s_1, \dots, s_k}$ the corresponding rearrangement group.

\begin{figure}
\centering
\begin{tikzpicture}
    \node at (0,1.6) {$\Gamma$};
    \node[vertex] (l) at (-1,0) {};
    \node[vertex] (r) at (1,0) {};
    \node[vertex] (c) at (0,0) {};
    \node[vertex] (ct) at (0,1) {};
    \node[vertex] (cb) at (0,-1) {};
    \draw[edge,red] (c) to (l);
    \draw[edge,red] (c) to (r);
    \draw[edge,red] (c) to (ct);
    \draw[edge,red] (c) to (cb);
    \begin{scope}[xshift=3.333cm]
    \node at (0,1.6) {$R_{\textcolor{blue}{4}}$};
    \node[vertex] (l) at (-1,0) {}; \draw (-1,0) node[above]{$\iota$};
    \node[vertex] (r) at (1,0) {}; \draw (1,0) node[above]{$\tau$};
    \node[vertex] (c) at (0,0) {};
    \node[vertex] (ct) at (0,1) {};
    \node[vertex] (cb) at (0,-1) {};
    \draw[edge,red] (c) to (l);
    \draw[edge,red] (c) to (r);
    \draw[edge,red] (c) to (ct);
    \draw[edge,red] (c) to (cb);
    \end{scope}
    \begin{scope}[xshift=6.667cm]
    \node at (0,1.6) {$R_{\textcolor{red}{3}}$};
    \node[vertex] (l) at (-1,0) {}; \draw (-1,0) node[above]{$\iota$};
    \node[vertex] (r) at (1,0) {}; \draw (1,0) node[above]{$\tau$};
    \node[vertex] (c) at (0,0) {};
    \node[vertex] (ct) at (0,1) {};
    \draw[edge,Green] (c) to (l);
    \draw[edge,Green] (c) to (r);
    \draw[edge,Green] (c) to (ct);
    \end{scope}
    \begin{scope}[xshift=9.5cm]
    \node at (0,1.6) {$R_{\textcolor{Green}{6}}$};
    \node[vertex] (l) at (-1,0) {}; \draw (-1,0) node[above]{$\iota$};
    \node[vertex] (r) at (1,0) {}; \draw (1,0) node[above]{$\tau$};
    \node[vertex] (c) at (0,0) {};
    \node[vertex] (v1) at (60:1) {};
    \node[vertex] (v2) at (120:1) {};
    \node[vertex] (v3) at (240:1) {};
    \node[vertex] (v4) at (300:1) {};
    \draw[edge,blue] (c) to (l);
    \draw[edge,blue] (c) to (r);
    \draw[edge,blue] (c) to (v1);
    \draw[edge,blue] (c) to (v2);
    \draw[edge,blue] (c) to (v3);
    \draw[edge,blue] (c) to (v4);
    \end{scope}
\end{tikzpicture}
\caption{A replacement system for the generalized Wa\.zewski dendrite $D_{\{3,4,6\}}$ with the ordering $4 \to 3 \to 6$ on $S$.}
\label{fig:rep:sys:col}
\end{figure}
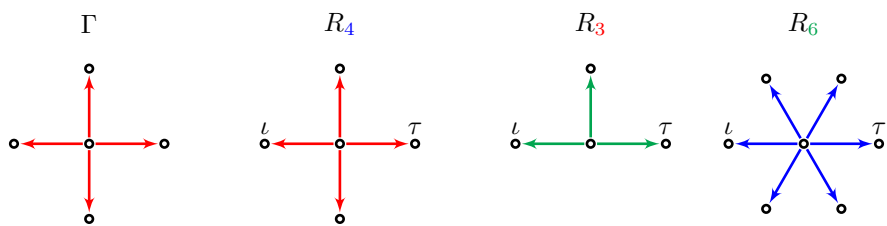

It is plausible that $G_{s_1, \dots, s_k}$ is generated by a copy for each of the finite groups $\mathrm{Sym}(s_i)$ along with the subgroup of Thompson's group $F$ that preserves the coloration of the edges.
If this subgroup is transitive enough, then $G_{s_1, \dots, s_k}$ is likely to be dense in $\mathrm{Homeo}(D_S)$.
The study of these groups is left for future research.

Finally, one could ``mix'' the various generalizations proposed throughout this Section.
For example, there may be orientation-preserving rearrangement groups of generalized Wa\.zewski dendrites where the Thompson's group acting on each EE-arc is a color-preserving subgroup of some Higman-Thompson group $F_m$.
This too is left for future research.


\section*{Acknowledgements}

The author would like to thank Jim Belk, Ilaria Castellano, Indira Chatterji, Bruno Duchesne, Bianca Marchionna, Francesco Matucci, Yury Neretin, Feyishayo Olukoya, Davide Perego, Stefan Witzel and the anonymous referee for useful advices, discussions, comments and corrections.
The author is also grateful to Jim Belk and Bradley Forrest for kindly providing their images of the Airplane and Vicsek limit spaces depicted in \cref{fig:airplane:limit:space,fig:T(F)}, from their work \cite{belk2016rearrangement}.


\emergencystretch=2em
\printbibliography[heading=bibintoc]

\end{document}